\numberwithin{equation}{section}
\numberwithin{figure}{section}
\theoremstyle{plain}
  \newtheorem{theorem}{Theorem}
\theoremstyle{plain}
  \newaliascnt{proposition}{theorem}
  \newtheorem{proposition}[proposition]{Proposition}
\theoremstyle{plain}
  \newaliascnt{lemma}{theorem}
  \newtheorem{lemma}[lemma]{Lemma}
\theoremstyle{plain}
  \newaliascnt{corollary}{theorem}
  \newtheorem{corollary}[corollary]{Corollary}
\theoremstyle{definition}
\theoremstyle{definition}
  \newtheorem{remark}{Remark}
\theoremstyle{definition}
  \newtheorem{notation}{Notation}
\theoremstyle{definition}
\theoremstyle{definition}
\date{}
\begin{document}

\title[Dolbeault-Dirac operators and the Parthasarathy formula]{Dolbeault-Dirac operators, quantum Clifford algebras and the Parthasarathy formula}

\author{Marco Matassa}%

\email{marco.matassa@gmail.com, mmatassa@math.uio.no}

\address{Department of Mathematics, University of Oslo, P.B. 1053 Blindern, 0316 Oslo, Norway.}

\begin{abstract}
We consider Dolbeault-Dirac operators on quantized irreducible flag manifolds as defined by Krähmer and Tucker-Simmons.
We show that, in general, these operators do not satisfy a formula of Parthasarathy-type.
This is a consequence of two results that we prove here: we always have quadratic commutation relations for the relevant quantum root vectors, up to terms in the quantized Levi factor; there are examples of quantum Clifford algebras where the commutation relations are not of quadratic-constant type.
\end{abstract}

\maketitle

\section{Introduction}

Dolbeault-Dirac operators on Kähler manifolds can be written, up to a scalar, in the form $D = \eth + \eth^* \in U(\mathfrak{g}) \otimes \mathrm{Cl}$.
In the conventions we adopt, the element $\eth \in U(\mathfrak{g}) \otimes \mathrm{Cl}$ can be identified with the adjoint of the Dolbeault operator $\bar{\partial}$.
Here $U(\mathfrak{g})$ is the enveloping algebra of $\mathfrak{g}$ and $\mathrm{Cl}$ is an appropriate Clifford algebra.
The class of Kähler manifolds contains that of irreducible flag manifolds.
Dolbeault-Dirac operators on quantized irreducible flag manifolds where originally defined in \cite{qflag}.
This definition was revisited and extended in \cite{qflag2}, where these operators are given in the form $D = \eth + \eth^* \in U_q(\mathfrak{g}) \otimes \mathrm{Cl}_q$.
Now $U_q(\mathfrak{g})$ is the quantized enveloping algebra of $\mathfrak{g}$, while $\mathrm{Cl}_q$ is the quantum Clifford algebra introduced in the cited paper.
One of the main results there is that $\eth^2 = 0$, as in the classical case.

This brings us to the third item in the title of this paper, the Parthasarathy formula \cite{partha} (we recommend \cite{dirac-book} for a textbook derivation).
This formula expresses the square of Dolbeault-Dirac operators in terms of quadratic Casimirs, up to multiples of the identity.
This readily allows to compute the spectra of such operators in terms of the representation theory of the corresponding Lie algebras.
It is an interesting question whether a formula of Parthasarathy-type also exists in the quantum setting.
One important application would be to define spectral triples on quantized irreducible flag manifolds, which was the main motivation in \cite{qflag}.
Indeed, it would allow to check the compact resolvent condition for Dolbeault-Dirac operators, which is an important requirement for a spectral triple \cite{con-book}.

Up to now, a quantum Parthasarathy formula is known to hold for projective spaces as a consequence of the results in \cite{dd-proj}, which generalize those obtained for low-dimensional cases in \cite{ddl-plane} and \cite{ds-pod}.
We should point out that the setup of the cited paper is different from the one we consider here.
The connection between the two approaches was later made in \cite{mat-proj}, where similar results are shown to hold.
It seems plausible that a Parthasarathy-type formula should hold for quantized irreducible flag manifolds, of which projective spaces are an example.
This expectation is motivated by the results of Heckenberger and Kolb in \cite{flagcalc1, flagcalc2}: they show that these spaces admit a canonical $q$-analogue of the de Rham complex, with the homogenous components having the same dimensions as in the classical case.
We stress that this is definitely not the case for general quantum spaces.

One of the main results of this paper is that a Parthasarathy-type formula does not hold for all quantized irreducible flag manifolds.
In order to state this result, we need to give a precise definition of what we mean by such a formula.
Recall that, for a Dolbeault-Dirac operator $D$, the classical Parthasarathy formula can be expressed as the identity $D^2 \sim C \otimes 1$. Here $C$ is the quadratic Casimir of $\mathfrak{g}$ and $\sim$ denotes equality up to terms in the Levi factor.
We need to consider a weaker formulation of this result, since in this form it does not even hold for the case of quantum projective spaces.
Clearly the formula should contain central elements of $U_q(\mathfrak{g})$, in order to make the connection with representation theory.
We are also allowed to neglect terms in the quantized Levi factor $U_q(\mathfrak{l})$: indeed these act as bounded operators on sections of the spinor bundle, hence they are not important for checking compactness of the resolvent of $D$.
The result then takes the following form.

\begin{theorem}
\label{thm:no-parthasarathy}
Let $D$ be a Dolbeault-Dirac operator corresponding to a quantized irreducible flag manifold.
Then there exists a flag manifold such that we do not have
\[
D^2 \sim \sum_i C_i \otimes T_i,
\]
where the elements $C_i \in U_q(\mathfrak{g})$ are assumed to be central and $T_i \in \mathrm{Cl}_q$.
Here the symbol $\sim$ denotes equality up to terms in the quantized Levi factor $U_q(\mathfrak{l})$.
\end{theorem}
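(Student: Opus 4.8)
The plan is to split the obstruction to a Parthasarathy-type formula into a ``root-vector part'' and a ``Clifford part'', to show via the quadratic commutation relations that the first behaves exactly as in the classical case, and then to use the examples of non-quadratic-constant quantum Clifford algebras to produce a genuine defect in the second. First I would use $\eth^2 = (\eth^*)^2 = 0$ from \cite{qflag2} to write $D^2 = \eth\eth^* + \eth^*\eth$, and substitute the defining expressions for $\eth$ and $\eth^*$, which are sums of quantum root vectors of the nilradical tensored with the creation, respectively annihilation, generators of $\mathrm{Cl}_q$. Expanding the products, $D^2$ becomes a sum of terms of the shape (two quantum root vectors) $\otimes$ (two Clifford generators).

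I would then filter $U_q(\mathfrak{g})$ by assigning degree one to each quantum root vector; this filtration descends to the quotient by the quantized Levi factor, in which ``$\sim$'' is genuine equality. The quadratic commutation relations are used to reorganize the root-vector factors of $D^2$ so that every correction term falls into $U_q(\mathfrak{l})$, with the effect that, modulo $U_q(\mathfrak{l})$, the element $D^2$ has filtration degree at most two and its degree-two symbol has the transparent form $\sum_{\alpha,\beta} Q_{\alpha\beta} \otimes H_{\alpha\beta}$. Here $\alpha,\beta$ run over the roots of the nilradical, $Q_{\alpha\beta}$ is the degree-two root-vector monomial attached to the pair $(\alpha,\beta)$, the family $\{Q_{\alpha\beta}\}$ being linearly independent in the associated graded, and $H_{\alpha\beta} \in \mathrm{Cl}_q$ is the ($q$-)anticommutator of the corresponding Clifford generators dictated by the relations of $\mathrm{Cl}_q$. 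This is the sense in which the first result makes the root-vector side ``as clean as classically'': the whole room for a failure of Parthasarathy sits in the Clifford coefficients $H_{\alpha\beta}$, which classically are scalars.

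Next I would compare this against the general shape of a Parthasarathy-type identity. If $D^2 \sim \sum_i C_i \otimes T_i$ with the $C_i$ central, then in the filtered quotient $D^2$ lies in $\overline{Z(U_q(\mathfrak{g}))} \otimes \mathrm{Cl}_q$, so its degree-two symbol lies in $\mathrm{gr}_2\bigl(\overline{Z(U_q(\mathfrak{g}))}\bigr) \otimes \mathrm{Cl}_q$. By the Harish-Chandra description of the center, this graded piece is one-dimensional, spanned by the reduced symbol of the quadratic Casimir, which has the form $\sum_\gamma \lambda_\gamma\, Q_{\gamma\gamma}$ for the usual root-system constants $\lambda_\gamma$. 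Matching symbols and using the linear independence of the $Q_{\alpha\beta}$ then forces $H_{\alpha\beta} = 0$ for $\alpha \neq \beta$ and $H_{\gamma\gamma} = \lambda_\gamma T$ for one common element $T \in \mathrm{Cl}_q$: the Clifford generators for distinct directions must anticommute, and the diagonal anticommutators must be scalar multiples --- with the classically prescribed ratios --- of a single element. Since classically $H_{\gamma\gamma} = \lambda_\gamma \cdot 1$, this is exactly the requirement that the relations of $\mathrm{Cl}_q$ be of quadratic-constant type.

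It then remains to exhibit a quantized irreducible flag manifold for which this requirement fails, and this is what the second result provides: one picks an example from the list of quantum Clifford algebras with non-quadratic-constant relations, arranged so that the resulting $H_{\alpha\beta}$ violate the pattern above --- for instance so that some off-diagonal anticommutator is nonzero, or so that the non-scalar parts of the diagonal ones are not proportional to a common element. For such a flag manifold no formula $D^2 \sim \sum_i C_i \otimes T_i$ with central $C_i$ can hold, which is the statement of the theorem. I expect the main obstacle to be precisely the interface between the two results: on the one hand one must control the center of $U_q(\mathfrak{g})$ modulo $U_q(\mathfrak{l})$ tightly enough in low filtration degree for the symbol comparison to be legitimate; on the other hand one must check that the chosen non-quadratic-constant quantum Clifford algebra genuinely feeds an obstruction into $D^2$, rather than a defect that could still be absorbed into a larger central element. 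By contrast, the expansion of $\{\eth,\eth^*\}$ and the bookkeeping of which terms fall into $U_q(\mathfrak{l}) \otimes \mathrm{Cl}_q$ are laborious but routine once the quadratic commutation relations are available.
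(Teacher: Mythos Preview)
Your overall strategy matches the paper's: expand $D^{2}=\eth\eth^{*}+\eth^{*}\eth$, use the quadratic commutation relations for the root vectors to write $D^{2}\sim\sum_{i,j}\mathcal{E}_{i}\mathcal{E}_{j}^{*}\otimes T_{ij}$ with each $T_{ij}$ quadratic in the Clifford generators, deduce from a putative Parthasarathy identity that the off-diagonal $T_{ij}$ must vanish, and then exhibit a concrete example (the paper uses the Lagrangian Grassmannian $LG(2,4)$) where they do not.

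The one place where you take a harder road than necessary is the reduction step. You propose to control the degree-two symbol of $\overline{Z(U_{q}(\mathfrak{g}))}$ via a quantum Harish--Chandra argument, and you rightly flag this as the main obstacle. The paper bypasses it completely: to force $T_{ij}=0$ for $i\neq j$ it uses only that central elements commute with every $K_{k}$ and hence have weight zero under the Cartan. Since $\mathcal{E}_{i}\mathcal{E}_{j}^{*}$ has weight $\xi_{i}-\xi_{j}$, the off-diagonal monomials cannot occur in any combination of central elements, and PBW linear independence of the $\mathcal{E}_{i}\mathcal{E}_{j}^{*}$ finishes the job. No Harish--Chandra, no filtration comparison, no control of higher Casimirs is needed. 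Your sharper conclusion about the diagonal terms ($H_{\gamma\gamma}=\lambda_{\gamma}T$ for a single $T$) is likewise unnecessary: the off-diagonal failure alone already produces the counterexample.

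Two minor imprecisions worth fixing. First, the Clifford coefficients $T_{ij}$ are not merely $q$-anticommutators of two generators: they also absorb the lower-height terms from the root-vector commutation relations (the coefficients $c^{\eta,\eta'}_{\xi,\xi'}$ in \autoref{cor:comm-rel}), so the correct form is $T_{ij}=\gamma_{-}(w_{i})\gamma_{-}(w_{j})^{*}+\sum_{k,l}b^{i,j}_{k,l}\gamma_{-}(w_{k})^{*}\gamma_{-}(w_{l})$. Second, the condition you actually need is that $\Gamma_{i}\Gamma_{j}^{*}$ lie in the span of the $\Gamma_{k}^{*}\Gamma_{l}$ for $i\neq j$, a \emph{quadratic} relation rather than a quadratic-constant one; the paper's explicit computation for $LG(2,4)$ shows that even this weaker relation fails.
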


The strategy of the proof is as follows.
We begin by deriving the commutation relations for the relevant quantum root vectors, namely those appearing in the definition of $D$.
These turn out to be quadratic for all quantized irreducible flag manifolds, see \autoref{thm:comm-rel}.
This result hinges on the fact that the radical roots take a very special form in the irreducible case.
Using this result we obtain a general expression for $D^2$.
Next, we show that the assumption $D^2 \sim \sum_i C_i \otimes T_i$, with the elements $C_i$ being central, implies that certain terms appearing in the expression for $D^2$ should vanish.
This in turn implies that we should have certain quadratic commutation relations in the quantum Clifford algebra.
Therefore it suffices to find one example where such relations do not hold.
The example that we consider is that of the Lagrangian Grassmannian $LG(2, 4)$.
After going through the necessary computations, we finally show in \autoref{prop:not-quadratic} that the relevant quadratic relations do not hold in this case.

The paper is organized as follows.
In \autoref{sec:notation} we give some background and fix notations and conventions.
In \autoref{sec:comm-rel} we derive commutation relations for the quantum root vectors.
In \autoref{sec:parthasarathy} we discuss the implications of these relations for a quantum Parthasarathy formula.
In \autoref{sec:braiding} we obtain the relations for the exterior algebras corresponding to the Lagrangian Grassmannian.
In \autoref{sec:quantum-clifford} we derive explicit formulae for the quantum Clifford algebra.
Finally in \autoref{sec:comm-clifford} show that we do not have quadratic relations in this algebra.
In \autoref{sec:rescaling} we collect some formulae related to various possible rescalings.

\section{Notations and conventions}
\label{sec:notation}

In this section we fix some notations and briefly review some facts about complex simple Lie algebras, parabolic subalgebras and quantized enveloping algebras.

\subsection{Parabolic subalgebras}

Let $\mathfrak{g}$ be a finite-dimensional complex simple Lie algebra with a fixed Cartan subalgebra $\mathfrak{h}$.
We denote by $\Delta(\mathfrak{g})$ the root system, by $\Delta^{+}(\mathfrak{g})$ the positive roots and by $\Pi = \{ \alpha_{1}, \cdots, \alpha_{r} \}$ the simple roots.
Denote by $a_{ij}$ the entries of the Cartan matrix and by $(\cdot, \cdot)$ the usual invariant bilinear form on $\mathfrak{h}^{*}$.
In particular, in the simply-laced case we have $(\alpha_{i}, \alpha_{j}) = a_{ij}$.
Let $S \subset \Pi$ be a subset of the simple roots. Then we set
$$
\Delta(\mathfrak{l}) = \mathrm{span}(S) \cap \Delta(\mathfrak{g}), \quad
\Delta(\mathfrak{u}_{+}) = \Delta^{+}(\mathfrak{g}) \backslash \Delta^{+}(\mathfrak{l}).
$$
In terms of these roots we define
$$
\mathfrak{l} = \mathfrak{h} \oplus \bigoplus_{\alpha \in \Delta(\mathfrak{l})} \mathfrak{g}_{\alpha}, \quad
\mathfrak{u}_{\pm} = \bigoplus_{\alpha \in \Delta(\mathfrak{u}_{+})} \mathfrak{g}_{\pm \alpha}, \quad
\mathfrak{p} = \mathfrak{l} \oplus \mathfrak{u}_{+}.
$$
It follows that $\mathfrak{l}$ and $\mathfrak{u}_{\pm}$ are Lie subalgebras of $\mathfrak{g}$.
We call $\mathfrak{p}$ the \emph{standard parabolic subalgebra} associated to $S$ (and omit $S$ from the notation). The subalgebra $\mathfrak{l}$ is reductive and is called the \emph{Levi factor} of $\mathfrak{p}$, while $\mathfrak{u}_{+}$ is a nilpotent ideal of $\mathfrak{p}$ called the \emph{nilradical}. We refer to the roots of $\Delta(\mathfrak{u}_{+})$ as the \emph{radical roots}.
We have the commutation relations $[\mathfrak{u}_{+}, \mathfrak{u}_{-}] \subset \mathfrak{l}$.

The adjoint action of $\mathfrak{p}$ on $\mathfrak{g}$ descends to an action on $\mathfrak{g} / \mathfrak{p}$. The decomposition $\mathfrak{g} = \mathfrak{u}_{-} \oplus \mathfrak{p}$ gives $\mathfrak{g} / \mathfrak{p} \cong \mathfrak{u}_{-}$ as $\mathfrak{l}$-modules.
We say that $\mathfrak{p}$ is of \emph{cominuscule type} if $\mathfrak{g} / \mathfrak{p}$ is a simple $\mathfrak{p}$-module. The following well-known result readily allows to classify all cominuscule parabolics.

\begin{proposition}
A parabolic subalgebra $\mathfrak{p}$ is cominuscule if and only if it corresponds to $S = \Pi \backslash \{\alpha_t\}$, where the simple root $\alpha_t$ appears with multiplicity $1$ in the highest root of $\mathfrak{g}$.
\end{proposition}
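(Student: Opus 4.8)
The plan is to pass to the grading of $\mathfrak{g}$ determined by $S$. For a root $\alpha = \sum_i c_i \alpha_i$ set $\deg(\alpha) = \sum_{\alpha_i \notin S} c_i$; this yields a $\mathbb{Z}$-grading $\mathfrak{g} = \bigoplus_j \mathfrak{g}_j$ with $\mathfrak{g}_0 = \mathfrak{l}$, $\mathfrak{u}_{\pm} = \bigoplus_{\pm j > 0} \mathfrak{g}_j$ and $[\mathfrak{g}_i, \mathfrak{g}_j] \subseteq \mathfrak{g}_{i+j}$. Then $\mathfrak{g}/\mathfrak{p}$, which is $\cong \mathfrak{u}_-$ as an $\mathfrak{l}$-module, carries a $\mathbb{Z}_{<0}$-grading with pieces $\mathfrak{g}_j$, on which $\mathfrak{g}_0 = \mathfrak{l}$ acts preserving each piece while $[\mathfrak{u}_+, \mathfrak{g}_j] \subseteq \bigoplus_{k > j} \mathfrak{g}_k$, which is $0$ in $\mathfrak{g}/\mathfrak{p}$ whenever it lands in degree $\geq 0$. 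Finally, writing $\theta = \sum_i m_i \alpha_i$ for the highest root, the fact that $\theta - \alpha \in \sum_i \mathbb{Z}_{\geq 0}\,\alpha_i$ for every root $\alpha$ shows that the maximum of $\deg$ on $\Delta(\mathfrak{g})$ equals $\deg(\theta) = \sum_{\alpha_i \notin S} m_i$.

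For the forward direction, I would observe that $\mathfrak{g}_{-1}$ is a nonzero $\mathfrak{p}$-submodule of $\mathfrak{g}/\mathfrak{p}$: it is $\mathfrak{l}$-stable since $[\mathfrak{g}_0, \mathfrak{g}_{-1}] \subseteq \mathfrak{g}_{-1}$, it is killed by $\mathfrak{u}_+$ modulo $\mathfrak{p}$ since $[\mathfrak{u}_+, \mathfrak{g}_{-1}] \subseteq \bigoplus_{j \geq 0} \mathfrak{g}_j = \mathfrak{p}$, and it is nonzero because $\mathfrak{p} \neq \mathfrak{g}$ (if $\mathfrak{p} = \mathfrak{g}$ then $\mathfrak{g}/\mathfrak{p} = 0$ is not simple). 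Hence simplicity of $\mathfrak{g}/\mathfrak{p}$ forces $\mathfrak{g}/\mathfrak{p} = \mathfrak{g}_{-1}$, i.e. $\mathfrak{g}_j = 0$ for $j \leq -2$; since $\dim \mathfrak{g}_j = \dim \mathfrak{g}_{-j}$ this also gives $\mathfrak{g}_j = 0$ for $j \geq 2$, so $\sum_{\alpha_i \notin S} m_i = \deg(\theta) \leq 1$. As $S$ is a proper subset, exactly one $\alpha_t$ lies outside $S$ and it satisfies $m_t = 1$.

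For the converse, let $S = \Pi \setminus \{\alpha_t\}$ with $m_t = 1$. Then every radical root has degree $1$, so $\mathfrak{g} = \mathfrak{g}_{-1} \oplus \mathfrak{g}_0 \oplus \mathfrak{g}_1$, the nilradical $\mathfrak{u}_+$ is abelian, and $\mathfrak{g}/\mathfrak{p} = \mathfrak{g}_{-1}$ with $\mathfrak{u}_+$ acting trivially (as $[\mathfrak{g}_1, \mathfrak{g}_{-1}] \subseteq \mathfrak{g}_0 \subseteq \mathfrak{p}$); therefore $\mathfrak{g}/\mathfrak{p}$ is a simple $\mathfrak{p}$-module precisely when $\mathfrak{g}_{-1}$ is a simple $\mathfrak{l}$-module. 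Since $\mathfrak{g}_{-1} = \bigoplus_{\gamma \in \Delta(\mathfrak{u}_+)} \mathfrak{g}_{-\gamma}$ is a multiplicity-free sum of one-dimensional $\mathfrak{h}$-weight spaces, every $\mathfrak{l}$-submodule has the form $\bigoplus_{\gamma \in \Sigma} \mathfrak{g}_{-\gamma}$, and $\mathfrak{l}$-stability says exactly that $\Sigma$ is preserved by the move $\gamma \mapsto \gamma \pm \beta$ for $\beta \in \Delta(\mathfrak{l})$ whenever $\gamma \pm \beta \in \Delta(\mathfrak{u}_+)$. Hence it remains to show that $\Delta(\mathfrak{u}_+)$ is connected under this move, equivalently that every radical root can be joined to $\alpha_t$ by a chain of radical roots with consecutive differences in $S$.

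This connectivity statement is the main obstacle, and the point where the special form of the radical roots in the cominuscule case really enters; everything else is bookkeeping with the grading. I would prove it by induction on $\mathrm{ht}(\gamma)$: for a radical root $\gamma \neq \alpha_t$, write $\gamma = \alpha_t + \delta$ with $\delta$ a nonzero nonnegative combination of roots of $S$. Using that $(\cdot,\cdot)$ is positive-definite on the real span of the roots, one checks that either $(\alpha_t, \alpha_i) < 0$ for some $\alpha_i \in \mathrm{supp}(\delta)$, in which case $\alpha_t + \alpha_i$ is again a radical root and one may start the chain through it, or else $(\gamma, \alpha_i) > 0$ for some $\alpha_i \in \mathrm{supp}(\delta)$, in which case $\gamma - \alpha_i$ is a radical root of smaller height and induction applies; here one uses only the standard facts that $(\mu,\nu) < 0$ implies $\mu + \nu \in \Delta(\mathfrak{g})$ and $(\mu,\nu) > 0$ implies $\mu - \nu \in \Delta(\mathfrak{g})$ for roots $\mu \neq \pm\nu$. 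Alternatively, the converse direction can be deduced from the well-known classification of parabolic subalgebras with abelian nilradical.
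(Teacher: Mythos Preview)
The paper does not prove this proposition: it is simply stated as well-known, with the classification recorded in a table. Your proposal therefore goes beyond what the paper offers, and your forward direction via the grading is correct and clean.

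The converse direction, however, has a genuine gap in the connectivity argument. Your dichotomy is: either (1) $(\alpha_t,\alpha_i)<0$ for some $\alpha_i\in\mathrm{supp}(\delta)$, or else (2) $(\gamma,\alpha_i)>0$ for some such $\alpha_i$. Case (2) is handled correctly by passing to $\gamma-\alpha_i$ and inducting on height, but case (1) only produces the radical root $\alpha_t+\alpha_i$ adjacent to $\alpha_t$; it does not reduce the height of $\gamma$, so the induction stalls. Concretely, take $C_2$ with $\alpha_t=\alpha_2$ and $\gamma=\alpha_1+\alpha_2$: then $\mathrm{supp}(\delta)=\{\alpha_1\}$ and $(\gamma,\alpha_1)=(\alpha_1,\alpha_1)+(\alpha_2,\alpha_1)=2-2=0$, so case (2) fails; case (1) holds, but ``$\alpha_t+\alpha_1$ is a radical root'' just returns $\gamma$ itself and no progress is made. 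The underlying issue is that $(\gamma,\alpha_i)>0$ is sufficient but not necessary for $\gamma-\alpha_i\in\Delta$ (here $\gamma-\alpha_1=\alpha_2$ \emph{is} a root despite the vanishing inner product). A clean repair is to argue via highest rather than lowest weights: a highest weight vector in $\mathfrak{u}_+$ for $\mathfrak{l}$ corresponds to a radical root $\gamma$ with $\gamma+\alpha_i\notin\Delta$ for all $\alpha_i\in S$; but also $\gamma+\alpha_t\notin\Delta$ since its $\alpha_t$-coefficient would exceed $m_t=1$, so $\gamma+\alpha_i\notin\Delta$ for \emph{all} simple roots, forcing $\gamma=\theta$. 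Uniqueness of the highest weight vector (with one-dimensional root spaces) then yields irreducibility directly. Your alternative of invoking the classification of parabolics with abelian nilradical is, of course, exactly the route the paper takes.
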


Moreover, it is clear from this result that all radical roots contain $\alpha_t$ with multiplicity $1$. The classification of cominuscole parabolics is reported in \autoref{tab:class-comin}, see for example \cite{schubert}.

\begin{table}[h]

\begin{tabular}{|c|c|c|}
\hline
Root system & Dynkin diagram & Nomenclature
\tabularnewline
\hline
\hline

$A_r$ &

\begin{tikzpicture}[
root/.style = {circle, draw = black, fill = white, thick, inner sep = 0pt, minimum size = 3mm},
rootlabel/.style = {inner sep = 0pt, scale = .8}
]

\draw (0,0) -- (1,0) node {};
\draw [dotted] (1,0) -- (2,0) node {};
\draw [dotted] (2,0) -- (3,0) node {};
\draw (3,0) -- (4,0) node {};

\node at (0, 0) [root] {};
\node at (1, 0) [root] {};
\node at (2, 0) [root, fill = black] {};
\node at (3, 0) [root] {};
\node at (4, 0) [root] {};

\node at (0,-.35) [rootlabel] {$1$};
\node at (1,-.35) [rootlabel] {$2$};
\node at (2,-.35) [rootlabel] {$k$};
\node at (3,-.35) [rootlabel] {$r - 1$};
\node at (4,-.35) [rootlabel] {$r$};

\end{tikzpicture}

&

Grassmannian $Gr(k, r)$

\tabularnewline
\hline

$B_r$ &

\begin{tikzpicture}[
root/.style = {circle, draw = black, fill = white, thick, inner sep = 0pt, minimum size = 3mm},
rootlabel/.style = {inner sep = 0pt, scale = .8}
]

\draw (0,0) -- (1,0) node {};
\draw [dotted] (1,0) -- (2,0) node {};
\draw (2,0) -- (3,0) node {};
\draw (3,.05) -- (4,.05) node {};
\draw (3,-.05) -- (4,-.05) node {};
\draw (3.4,.2) -- (3.6,0) node {};
\draw (3.4,-.2) -- (3.6,0) node {};

\node at (0, 0) [root, fill = black] {};
\node at (1, 0) [root] {};
\node at (2, 0) [root] {};
\node at (3, 0) [root] {};
\node at (4, 0) [root] {};

\node at (0,-.35) [rootlabel] {$1$};
\node at (1,-.35) [rootlabel] {$2$};
\node at (2,-.35) [rootlabel] {$r - 2$};
\node at (3,-.35) [rootlabel] {$r - 1$};
\node at (4,-.35) [rootlabel] {$r$};

\end{tikzpicture}

&

Odd dimensional quadric $\mathbb{Q}^{2r - 1}$

\tabularnewline
\hline

$C_r$ &

\begin{tikzpicture}[
root/.style = {circle, draw = black, fill = white, thick, inner sep = 0pt, minimum size = 3mm},
rootlabel/.style = {inner sep = 0pt, scale = .8}
]

\draw (0,0) -- (1,0) node {};
\draw [dotted] (1,0) -- (2,0) node {};
\draw (2,0) -- (3,0) node {};
\draw (3,.05) -- (4,.05) node {};
\draw (3,-.05) -- (4,-.05) node {};
\draw (3.6,.2) -- (3.4,0) node {};
\draw (3.6,-.2) -- (3.4,0) node {};

\node at (0, 0) [root] {};
\node at (1, 0) [root] {};
\node at (2, 0) [root] {};
\node at (3, 0) [root] {};
\node at (4, 0) [root, fill = black] {};

\node at (0,-.35) [rootlabel] {$1$};
\node at (1,-.35) [rootlabel] {$2$};
\node at (2,-.35) [rootlabel] {$r - 2$};
\node at (3,-.35) [rootlabel] {$r - 1$};
\node at (4,-.35) [rootlabel] {$r$};

\end{tikzpicture}

&

Lagrangian Grassmannian $LG(r, 2r)$

\tabularnewline
\hline

$D_r$ &

\begin{tikzpicture}[
root/.style = {circle, draw = black, fill = white, thick, inner sep = 0pt, minimum size = 3mm},
rootlabel/.style = {inner sep = 0pt, scale = .8}
]

\draw (0,0) -- (1,0) node {};
\draw [dotted] (1,0) -- (2,0) node {};
\draw (2,0) -- (3,.25) node {};
\draw (2,0) -- (3,-.25) node {};

\node at (0, 0) [root, fill = black] {};
\node at (1, 0) [root] {};
\node at (2, 0) [root] {};
\node at (3, .25) [root] {};
\node at (3, -.25) [root] {};

\node at (0,-.35) [rootlabel] {$1$};
\node at (1,-.35) [rootlabel] {$2$};
\node at (2,-.35) [rootlabel] {$r - 2$};
\node at (3.7,.25) [rootlabel] {$r - 1$};
\node at (3.4,-.25) [rootlabel] {$r$};

\end{tikzpicture}

& Even dimensional quadric $\mathbb{Q}^{2r - 2}$ 

\tabularnewline
\hline

$D_r$ &

\begin{tikzpicture}[
root/.style = {circle, draw = black, fill = white, thick, inner sep = 0pt, minimum size = 3mm},
rootlabel/.style = {inner sep = 0pt, scale = .8}
]

\draw (0,0) -- (1,0) node {};
\draw [dotted] (1,0) -- (2,0) node {};
\draw (2,0) -- (3,.25) node {};
\draw (2,0) -- (3,-.25) node {};

\node at (0,0) [root] {};
\node at (1,0) [root] {};
\node at (2,0) [root] {};
\node at (3,.25) [root, fill = black] {};
\node at (3,-.25) [root, fill = black] {};

\node at (0,-.35) [rootlabel] {$1$};
\node at (1,-.35) [rootlabel] {$2$};
\node at (2,-.35) [rootlabel] {$r - 2$};
\node at (3.7,.25) [rootlabel] {$r - 1$};
\node at (3.4,-.25) [rootlabel] {$r$};

\end{tikzpicture}

& Orthogonal Grassmannian $OG(r + 1, 2r + 2)$

\tabularnewline
\hline 

$E_6$ &

\begin{tikzpicture}[
root/.style = {circle, draw = black, fill = white, thick, inner sep = 0pt, minimum size = 3mm},
rootlabel/.style = {inner sep = 0pt, scale = .8}
]

\draw (0,0) -- (1,0) node {};
\draw (1,0) -- (2,0) node {};
\draw (2,0) -- (3,0) node {};
\draw (3,0) -- (4,0) node {};
\draw (2,0) -- (2,.75) node {};

\node at (0, 0) [root, fill = black] {};
\node at (1, 0) [root] {};
\node at (2, 0) [root] {};
\node at (3, 0) [root] {};
\node at (4, 0) [root, fill = black] {};
\node at (2,.75) [root] {};

\node at (0,-.35) [rootlabel] {$1$};
\node at (1,-.35) [rootlabel] {$2$};
\node at (2,-.35) [rootlabel] {$3$};
\node at (3,-.35) [rootlabel] {$4$};
\node at (4,-.35) [rootlabel] {$5$};
\node at (2.35,.75) [rootlabel] {$6$};

\end{tikzpicture}

&

Cayley plane $\mathbb{OP}^2$

\tabularnewline
\hline

$E_7$ &

\begin{tikzpicture}[
root/.style = {circle, draw = black, fill = white, thick, inner sep = 0pt, minimum size = 3mm},
rootlabel/.style = {inner sep = 0pt, scale = .8}
]

\draw (0,0) -- (1,0) node {};
\draw (1,0) -- (2,0) node {};
\draw (2,0) -- (3,0) node {};
\draw (3,0) -- (4,0) node {};
\draw (4,0) -- (5,0) node {};
\draw (2,0) -- (2,.75) node {};

\node at (0, 0) [root] {};
\node at (1, 0) [root] {};
\node at (2, 0) [root] {};
\node at (3, 0) [root] {};
\node at (4, 0) [root] {};
\node at (5, 0) [root, fill = black] {};
\node at (2,.75) [root] {};

\node at (0,-.35) [rootlabel] {$1$};
\node at (1,-.35) [rootlabel] {$2$};
\node at (2,-.35) [rootlabel] {$3$};
\node at (3,-.35) [rootlabel] {$4$};
\node at (4,-.35) [rootlabel] {$5$};
\node at (5,-.35) [rootlabel] {$6$};
\node at (2.35,.75) [rootlabel] {$7$};

\end{tikzpicture}

&

(Unnamed) $G_\omega(\mathbb{O}^3, \mathbb{O}^6)$

\tabularnewline
\hline

\end{tabular}

\medskip

\caption{Classification of cominuscole parabolics. The black node corresponds to the simple root $\alpha_t$. If there is more than one, then they are equivalent choices.}
\label{tab:class-comin}

\end{table}

\subsection{Quantized enveloping algebras}

We briefly review some facts about quantized enveloping algebras. General references for this topic are the books \cite{klsc}, \cite{chpr} and \cite{lus-book}.
With the previous conventions for complex simple Lie algebras, let $d_i = (\alpha_i, \alpha_i) / 2$. Let $q \in \mathbb{C}$ and define $q_i = q^{d_i}$.
The \emph{quantized universal enveloping algebra} $U_q(\mathfrak{g})$ is generated by the elements $E_i$, $F_i$, $K_i$, $K_i^{-1}$, for $1\le i\le r$ and with $r$ the rank of $\mathfrak{g}$, satisfying the relations
\begin{gather*}
K_i K_i^{-1} = K_i^{-1} K_i=1,\ \
K_i K_j = K_j K_i, \\
K_i E_j K_i^{-1} = q_i^{a_{ij}} E_j,\ \
K_i F_j K_i^{-1} = q_i^{-a_{ij}} F_j, \\
E_i F_j - F_j E_i = \delta_{ij} \frac{K_i - K_i^{-1}}{q_i - q_i^{-1}},
\end{gather*}
plus the quantum analogue of the Serre relations.
The Hopf algebra structure is defined by
\begin{gather*}
\Delta(K_i)=K_i\otimes K_i, \quad
\Delta(E_i)=E_i\otimes1+ K_i\otimes E_i, \quad
\Delta(F_i)=F_i\otimes K_i^{-1}+1\otimes F_i, \\
S(K_{i}) = K_{i}^{-1}, \quad
S(E_{i}) = - K_{i}^{-1} E_{i}, \quad
S(F_{i}) = - F_{i} K_{i}, \quad
\varepsilon(K_i)=1, \quad
\varepsilon(E_i)=\varepsilon(F_i)=0.
\end{gather*}
For $q \in \mathbb{R}$, the \emph{compact real form} of $U_q(\mathfrak{g})$ is defined by
\[
K_{i}^{*} = K_{i}, \quad
E_{i}^{*} = K_{i} F_{i}, \quad
F_{i}^{*} = E_{i} K_{i}^{-1}.
\]

Let $\mathfrak{l}$ be the Levi factor corresponding to a parabolic subalgebra of $\mathfrak{g}$ defined by $S \subset \Pi$. Then the quantized enveloping algebra of the Levi factor is defined as
\[
U_q(\mathfrak{l}) = \{ \textrm{subalgebra of $U_q(\mathfrak{g})$ generated by $K_i^{\pm 1}$ and $E_j, F_j$ with $j \in S$} \}.
\]
This definition of the quantized Levi factor appears for example in \cite[Section 4]{quantum-flag}.

\subsection{Quantum root vectors}

Fix a reduced decomposition $w_0 = s_{i_1} \cdots s_{i_N}$ of the longest word of the Weyl group of $\mathfrak{g}$. Here $s_i$ is the reflection corresponding to $\alpha_i$. It is well known that all the positive roots can be obtained as $\beta_k = s_{i_1} \cdots s_{i_{k - 1}} (\alpha_{i_k})$ for $k = 1, \cdots, N$.

Now let $T_i$ be the Lusztig automorphisms.
The quantum root vectors are then defined by $E_{\beta_k} = T_{i_1} \cdots T_{i_{k - 1}} (E_{i_k})$ for $k = 1, \cdots, N$. They depend on the choice of the reduced decomposition of $w_0$. Similarly the quantum root vectors corresponding to the negative roots are defined by $F_{\beta_k} = T_{i_1} \cdots T_{i_{k - 1}} (F_{i_k})$ for $k = 1, \cdots, N$.

\section{Commutation relations}
\label{sec:comm-rel}

In this section we will discuss the commutation relations between the quantum roots vectors $E_\xi$ and $E_{\xi^\prime}^*$, where $\xi$ and $\xi^\prime$ are radical roots coming from some cominuscule parabolic subalgebra.
The main result of this section is \autoref{thm:comm-rel}, which shows that we have quadratic commutation relations, up to terms in the quantized Levi factor.

\subsection{Nilradical and adjoint action}

It is well-known that the irreducible representations of $U_q(\mathfrak{g})$ essentially coincide with those of $U(\mathfrak{g})$, when $q$ is not a root of unity.
Hence there exists a $U_q(\mathfrak{l})$-module corresponding to the classical nilradical, which we denote by $\mathfrak{u}_+$.
Suppose furthermore that $\mathfrak{u}_+$ comes from a \emph{cominuscule} parabolic subalgebra $\mathfrak{p}$.
Then it follows from the results of \cite{zwi} that $\mathfrak{u}_+$ can be identified with a certain subspace of $U_q(\mathfrak{g})$.

Let us briefly review this result.
First of all, recall that $U_q(\mathfrak{g})$ acts on itself by the adjoint action, which is defined by $X \triangleright Y = X_{(1)} Y S(X_{(2)})$, where as usual we use Sweedler's notation.
Consider now the subspace of $U_q(\mathfrak{g})$ spanned by the quantum root vectors $\{ E_\xi \}_\xi$.
These depend on the choice of decomposition of the longest word of the Weyl group.
As in \cite[Section 5.1]{zwi} we assume that it has a certain natural factorization (this won't be very important in the following, so we omit the details).
The result that we need is part of
\cite[Main Theorem 5.6]{zwi}, although stated in a slightly different language.

\begin{proposition}[Zwicknagl]
The vector space spanned by the quantum root vectors $\{ E_\xi \}_\xi$, together with the adjoint action, is isomorphic to $\mathfrak{u}_+$ as a $U_q(\mathfrak{l})$-module.
\end{proposition}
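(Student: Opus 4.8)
The statement is \cite[Main Theorem 5.6]{zwi}, so the plan is to outline the argument rather than reproduce every detail. Write $V = \mathrm{span}_{\mathbb{C}}\{ E_\xi : \xi \in \Delta(\mathfrak{u}_+)\} \subset U_q(\mathfrak{g})$. There are two parts: first show that $V$ is stable under the adjoint action of $U_q(\mathfrak{l})$, and then identify the resulting module by a weight comparison.

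For the first part, I would fix a reduced decomposition of $w_0$ adapted to the parabolic, so that the quantum root vectors $E_\xi$ with $\xi$ radical occupy a consecutive block in the associated PBW ordering. Since each $E_\xi$ is a weight vector of weight $\xi$, the elements $K_i^{\pm 1}$ act on $V$ by scalars and hence preserve it; it remains to treat $\mathrm{ad}(E_j)$ and $\mathrm{ad}(F_j)$ for $j \in S$. Here the cominuscule hypothesis is decisive: by the proposition recalled above, $\alpha_t$ occurs with multiplicity $1$ in the highest root, hence every positive root has $\alpha_t$-coefficient $0$ (a root of $\mathfrak{l}$) or $1$ (a radical root), and each radical root has $\alpha_t$-coefficient exactly $1$. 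Consequently, for $\xi \in \Delta(\mathfrak{u}_+)$ and $j \in S$, the weight $\xi \pm \alpha_j$ is either again a radical root or not a root of $\mathfrak{g}$ at all; in particular it is never $0$ and never a root of $\mathfrak{l}$. Writing $\mathrm{ad}(E_j)(E_\xi) = E_j E_\xi - q_j^{(\alpha_j,\xi)} E_\xi E_j$ and $\mathrm{ad}(F_j)(E_\xi) = [F_j, E_\xi] K_j$, and expanding via the Levendorskii--Soibelman straightening relations, every PBW monomial that can occur has total weight $\xi \pm \alpha_j$ and total $\alpha_t$-degree $1$, hence involves exactly one radical quantum root vector together with factors from the Levi part. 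When $\xi \pm \alpha_j$ is a root, the torus and higher-degree contributions cancel and the result is a scalar multiple of $E_{\xi \pm \alpha_j}$; when $\xi \pm \alpha_j$ is not a root, the commutator vanishes. (For instance, a direct computation in $U_q(\mathfrak{sl}_3)$ with $S = \{\alpha_1\}$ gives $\mathrm{ad}(F_1)(E_{\alpha_1 + \alpha_2}) = E_{\alpha_2}$, the spurious $K_1$-terms cancelling exactly.) This cancellation is the step I expect to be the main obstacle: it is precisely what Zwicknagl verifies, and it is exactly the multiplicity-one property of $\alpha_t$ that makes it work. For a non-cominuscule parabolic, radical roots with $\alpha_t$-coefficient $\ge 2$ appear, genuine products of two radical root vectors survive, and $V$ fails to be ad-stable.

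Granting stability, the second part is short. The vectors $E_\xi$, $\xi \in \Delta(\mathfrak{u}_+)$, form part of a PBW basis and are therefore linearly independent, so as an $\mathfrak{h}$-weight module $V$ has character $\sum_{\xi \in \Delta(\mathfrak{u}_+)} e^{\xi}$, which coincides with the character of the classical $\mathfrak{l}$-module $\mathfrak{u}_+$. Since $q$ is not a root of unity, the finite-dimensional type-$1$ modules of $U_q(\mathfrak{l})$ are completely reducible and are determined up to isomorphism by their characters, exactly as in the classical case; moreover the cominuscule hypothesis forces $\mathfrak{u}_+$ to be a simple $\mathfrak{l}$-module, being dual to $\mathfrak{g}/\mathfrak{p} \cong \mathfrak{u}_-$. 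Hence the equality of characters yields $V \cong \mathfrak{u}_+$ as $U_q(\mathfrak{l})$-modules, as claimed. I would finally remark that this isomorphism also pins down the adjoint action on the $E_\xi$ up to the normalization of the quantum root vectors, which is the form in which the result is used later.
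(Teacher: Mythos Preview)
The paper does not give its own proof of this proposition: it simply attributes the result to \cite[Main Theorem 5.6]{zwi}, exactly as you do in your opening sentence, so your proposal matches the paper's treatment. The additional outline you provide goes beyond what the paper offers and is a reasonable sketch of what such an argument entails, with the cancellation of the non-leading PBW contributions correctly flagged as the technical crux that you defer to \cite{zwi}.
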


An immediate consequence is the following.

\begin{corollary}
\label{lem:adjoint-action}
Let $\xi \in \Delta(\mathfrak{u}_+)$ and $\alpha_k \in \Delta(\mathfrak{l})$. Then we have
\[
E_{k} \triangleright E_\xi = c_{k, \xi} E_{\xi + \alpha_k}, \quad F_{k} \triangleright E_\xi = c^\prime_{k, \xi} E_{\xi - \alpha_k},
\]
where $c_{k, \xi} = 0$ if $\xi + \alpha_k \notin \Delta(\mathfrak{u}_+)$ and $c^\prime_{k, \xi} = 0$ if $\xi - \alpha_k \notin \Delta(\mathfrak{u}_+)$.
\end{corollary}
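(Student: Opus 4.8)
The plan is to obtain this as a direct consequence of Zwicknagl's proposition above, once we know that the adjoint action is compatible with the weight grading. The first step is to record that, writing $X \triangleright Y = X_{(1)} Y S(X_{(2)})$ and using the coproducts and antipodes fixed in \autoref{sec:notation}, one has $E_k \triangleright Y = E_k Y - K_k Y K_k^{-1} E_k$ and a similar formula for $F_k \triangleright Y$. From $K_i E_j K_i^{-1} = q_i^{a_{ij}} E_j = q^{(\alpha_i, \alpha_j)} E_j$ and the corresponding relation for the $F_j$, it follows that if $Y$ is a weight vector for the adjoint action of $U_q(\mathfrak{h})$, meaning $K_i Y K_i^{-1} = q^{(\alpha_i, \mu)} Y$ for all $i$, then $E_k \triangleright Y$ and $F_k \triangleright Y$ are again weight vectors (possibly zero), of weights $\mu + \alpha_k$ and $\mu - \alpha_k$ respectively. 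In particular, since $K_i E_\xi K_i^{-1} = q^{(\alpha_i, \xi)} E_\xi$, each quantum root vector $E_\xi$ is a weight vector of weight $\xi$ for this action.

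The second step is to feed in the preceding proposition: the span $V = \mathrm{span}\{ E_\xi \}_\xi$ is a $U_q(\mathfrak{l})$-submodule of $U_q(\mathfrak{g})$, under the adjoint action, isomorphic to the classical nilradical $\mathfrak{u}_+$. Since $U_q(\mathfrak{h}) \subset U_q(\mathfrak{l})$, this isomorphism identifies the weight-space decomposition of $V$ for the adjoint $U_q(\mathfrak{h})$-action with the root-space decomposition $\mathfrak{u}_+ = \bigoplus_{\alpha \in \Delta(\mathfrak{u}_+)} \mathfrak{g}_\alpha$. As the root spaces of a simple Lie algebra are one-dimensional, $V$ has one-dimensional weight spaces indexed exactly by the radical roots, with $V_\xi = \mathbb{C} E_\xi$ for $\xi \in \Delta(\mathfrak{u}_+)$ and $V_\mu = 0$ for every other weight $\mu$. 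Because $E_k, F_k \in U_q(\mathfrak{l})$, the operators $E_k \triangleright (-)$ and $F_k \triangleright (-)$ preserve $V$, and by the first step they carry the weight-$\xi$ line into $V_{\xi + \alpha_k}$ and $V_{\xi - \alpha_k}$ respectively. Hence $E_k \triangleright E_\xi = c_{k, \xi} E_{\xi + \alpha_k}$ and $F_k \triangleright E_\xi = c^\prime_{k, \xi} E_{\xi - \alpha_k}$ for scalars $c_{k, \xi}, c^\prime_{k, \xi}$, which vanish whenever the relevant target weight is not a radical root.

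I do not expect a genuine obstacle here; the one point deserving a remark is that, when $\xi + \alpha_k$ (respectively $\xi - \alpha_k$) fails to be a radical root, it cannot instead be the zero weight or a root of $\mathfrak{l}$, cases in which $V$ still has no weight vector but which would otherwise complicate the bookkeeping. This is settled by the observation following \autoref{tab:class-comin}: every radical root $\xi$ contains the distinguished simple root $\alpha_t$ with multiplicity exactly one, while $\alpha_k \in \Delta(\mathfrak{l})$ has $\alpha_t$-multiplicity zero, so $\xi \pm \alpha_k$ again has $\alpha_t$-multiplicity one and is therefore neither $0$ nor a root of $\mathfrak{l}$. Consequently $\xi \pm \alpha_k$ is either a radical root or not a root at all, and in the latter case $V_{\xi \pm \alpha_k} = 0$, which forces the corresponding constant to vanish and completes the argument.
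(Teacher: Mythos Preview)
Your argument is correct and is precisely the unpacking of what the paper intends: the corollary is stated there without proof, merely as ``an immediate consequence'' of Zwicknagl's proposition, and you have spelled out exactly that immediate consequence via the one-dimensionality of the weight spaces of $\mathfrak{u}_+$. Your extra remark ruling out the possibility that $\xi \pm \alpha_k$ lands in $\Delta(\mathfrak{l})$ or equals $0$ is a nice bit of care, but it is not strictly needed for the conclusion as phrased: the statement only asserts that $E_k \triangleright E_\xi$ lies in the line $\mathbb{C} E_{\xi + \alpha_k}$ when that line exists and is zero otherwise, which already follows from $V_{\xi + \alpha_k}$ being either $\mathbb{C} E_{\xi + \alpha_k}$ or $0$.
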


In order to obtain commutation relations between $E_\xi$ and $E_{\xi^\prime}^*$, we shall also need the action of $U_q(\mathfrak{l})$ acts on the latter elements.
This easily follows from the previous result.

\begin{lemma}
The vector space spanned by the elements $\{E_{\xi}^*\}_\xi$ is invariant under the action of $U_q(\mathfrak{l})$.
In particular we have $E_{k} \triangleright E_{\xi}^* = -q^{-(\alpha_{k}, \alpha_{k})}(F_{k} \triangleright E_{\xi})^*$ for $\alpha_k \in \Delta(\mathfrak{l})$.
\end{lemma}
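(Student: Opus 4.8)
The plan is to deduce the invariance of $\mathrm{span}\{E_\xi^*\}_\xi$ under $U_q(\mathfrak{l})$ directly from the previous \autoref{lem:adjoint-action}, by taking adjoints of the relation $X \triangleright E_\xi = (\text{scalar}) E_{\xi \pm \alpha_k}$ with respect to the $*$-structure of the compact real form. The key algebraic fact I would isolate first is how the adjoint action interacts with $*$: for a Hopf $*$-algebra with the conventions above, one has an identity of the form $(X \triangleright Y)^* = \widehat{X} \triangleright Y^*$, where $\widehat{X}$ is obtained from $X^*$ by applying the antipode (or its inverse) — concretely, $(X \triangleright Y)^* = S(X)^* \triangleright Y^*$ up to the precise placement of $S$ versus $S^{-1}$ dictated by $\Delta(X^{(1)}) \otimes X^{(2)}$ and the fact that $*$ is an algebra anti-homomorphism and a coalgebra homomorphism. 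I would verify this identity on generators, since that is all that is needed.

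Next I would specialize to $X = E_k$ with $\alpha_k \in \Delta(\mathfrak{l})$. Using $E_k^* = K_k F_k$ and $S(E_k) = -K_k^{-1} E_k$, together with $\Delta(E_k) = E_k \otimes 1 + K_k \otimes E_k$, a short computation expresses $(E_k \triangleright E_\xi)^*$ in terms of $F_k \triangleright E_\xi^*$ and scalars involving $K_k$ acting on the weight vector $E_\xi^*$ — but since $E_\xi$ has a definite weight $\xi$ and $\xi$ is a radical root, the grading collapses the $K_k$-factors into a scalar $q^{-(\alpha_k,\alpha_k)}$ or similar. Combined with \autoref{lem:adjoint-action}, which tells us $F_k \triangleright E_\xi = c'_{k,\xi} E_{\xi - \alpha_k}$, this yields $E_k \triangleright E_\xi^* = -q^{-(\alpha_k,\alpha_k)} \overline{c'_{k,\xi}}\, E_{\xi - \alpha_k}^*$, and in particular the span is $E_k$-invariant. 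The analogous computation with $X = F_k$ gives invariance under $F_k$, and invariance under $K_k^{\pm 1}$ is immediate since $E_\xi^*$ is again a weight vector. Since $U_q(\mathfrak{l})$ is generated by these elements, the span of $\{E_\xi^*\}_\xi$ is $U_q(\mathfrak{l})$-invariant.

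The main obstacle, such as it is, is bookkeeping with the Hopf-$*$-algebra conventions: getting the exact form of the identity $(X \triangleright Y)^*$ right, in particular whether $S$ or $S^{-1}$ appears and where the $K$-factors land, and then tracking the resulting scalars carefully enough to land on precisely $-q^{-(\alpha_k,\alpha_k)}$ in the stated formula $E_k \triangleright E_\xi^* = -q^{-(\alpha_k,\alpha_k)}(F_k \triangleright E_\xi)^*$. There is also a subtlety about whether the structure constants $c'_{k,\xi}$ are real (so that no complex conjugation clutters the formula); this should follow from the fact that the quantum root vectors can be chosen with real structure constants in the relevant integral form, or can simply be absorbed since only the span matters. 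No deep input is required — the content is entirely the compatibility of $\triangleright$ with $*$ plus the weight-grading argument, and everything reduces to \autoref{lem:adjoint-action}.
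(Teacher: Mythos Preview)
Your proposal is correct and matches the paper's approach: the paper's proof invokes exactly the Hopf $*$-algebra identity $X \triangleright Y^* = (S(X)^* \triangleright Y)^*$ together with closure of $U_q(\mathfrak{l})$ under $S$ and $*$, and notes that the explicit formula follows from the conventions (indeed $S(E_k)^* = -q^{-(\alpha_k,\alpha_k)} F_k$, which gives the stated identity without any need to worry about conjugating structure constants).
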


\begin{proof}
This easily follows from the general identity $X \triangleright Y^* = (S(X)^* \triangleright Y)^*$ and the fact that $U_q(\mathfrak{l})$ is closed under the antipode and the involution.
Otherwise one can proceed by direct computation.
The explicit action follows from our conventions for $U_q(\mathfrak{g})$.
\end{proof}

\subsection{Commutation relations}

We now proceed to derive the basic case of our commutation relations.
We are only interested in obtaining these relations modulo terms in the quantized Levi factor $U_q(\mathfrak{l})$.
For this reason we introduce the following notation.

\begin{notation}
For $X, Y \in U_q(\mathfrak{g})$ we write $X \sim Y$ if $X = Y + Z$ with $Z \in U_q(\mathfrak{l})$.
\end{notation}

We denote by $\alpha_t$ the unique simple radical root corresponding to a cominuscule parabolic.

\begin{lemma}
\label{lem:basic-comm}
Let $\xi \in \Delta(\mathfrak{u}_+)$. We have the commutation relations
\[
E_t E_\xi^* - q^{- (\alpha_t, \xi)} E_\xi^* E_t \sim 0.
\]
\end{lemma}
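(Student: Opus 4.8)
The plan is to exploit the fact that $E_t$ is the generator attached to the unique simple radical root $\alpha_t$, so that $E_\xi^*$ is a lowest-weight-type vector inside $U_q(\mathfrak{g})$ on which $E_t$ acts in a very controlled way via the adjoint action. First I would recall the general relation between the adjoint action and the product in a Hopf algebra: for $X$ primitive-like, the failure of $X$ to commute with $Y$ is governed by $X \triangleright Y$. Concretely, using $\Delta(E_t) = E_t \otimes 1 + K_t \otimes E_t$ and $S(E_t) = -K_t^{-1} E_t$, one computes
\[
E_t \triangleright E_\xi^* = E_t E_\xi^* S(1) + K_t E_\xi^* S(E_t) = E_t E_\xi^* - K_t E_\xi^* K_t^{-1} E_t.
\]
Since $E_\xi^*$ has a definite weight — namely $-\xi$, because $E_\xi$ has weight $\xi$ and the involution sends $K_i \mapsto K_i$ while reversing the raising/lowering character — we have $K_t E_\xi^* K_t^{-1} = q^{-(\alpha_t,\xi)} E_\xi^*$. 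Substituting gives
\[
E_t \triangleright E_\xi^* = E_t E_\xi^* - q^{-(\alpha_t,\xi)} E_\xi^* E_t,
\]
which is exactly the left-hand side of the claimed relation. So the lemma reduces to showing $E_t \triangleright E_\xi^* \in U_q(\mathfrak{l})$.

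For that, I would invoke the previous lemma, which gives $E_t \triangleright E_\xi^* = -q^{-(\alpha_t,\alpha_t)} (F_t \triangleright E_\xi)^*$. Now I use \autoref{lem:adjoint-action}: $F_t \triangleright E_\xi = c'_{t,\xi} E_{\xi - \alpha_t}$, with $c'_{t,\xi} = 0$ unless $\xi - \alpha_t \in \Delta(\mathfrak{u}_+)$. Here is where the cominuscule hypothesis does the work: by the proposition classifying cominuscule parabolics, every radical root contains $\alpha_t$ with multiplicity exactly $1$, so $\xi - \alpha_t$ contains $\alpha_t$ with multiplicity $0$, hence $\xi - \alpha_t \notin \Delta(\mathfrak{u}_+)$ (it is either not a root, or a root of $\mathfrak{l}$). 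Therefore $c'_{t,\xi} = 0$, so $F_t \triangleright E_\xi = 0$, and consequently $E_t \triangleright E_\xi^* = 0$. This gives the stronger statement $E_t E_\xi^* - q^{-(\alpha_t,\xi)} E_\xi^* E_t = 0$, which certainly implies $\sim 0$.

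The only genuinely delicate point — and the one I would be most careful about — is the bookkeeping of the weight of $E_\xi^*$ and the resulting eigenvalue of $\mathrm{Ad}(K_t)$, together with pinning down the exact constant in $E_t \triangleright E_\xi^* = -q^{-(\alpha_t,\alpha_t)}(F_t \triangleright E_\xi)^*$; sign and power-of-$q$ conventions for the involution and coproduct must be tracked consistently with the conventions fixed in \autoref{sec:notation}. Everything else is formal: the Hopf-algebraic identity for the adjoint action, the multiplicativity of $\mathrm{Ad}(K_t)$ on weight vectors, and the multiplicity-one property of $\alpha_t$ in radical roots, which is precisely the structural input that makes the cominuscule case so rigid. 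I would present the computation of $E_t \triangleright E_\xi^*$ first, then quote the two earlier results to conclude, noting in passing that one in fact obtains an honest commutation relation, not merely one modulo $U_q(\mathfrak{l})$.
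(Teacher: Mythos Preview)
Your reduction is clean and actually matches the paper's approach in spirit: both identify the left-hand side with an adjoint-action expression and then reduce to controlling the effect of $F_t$ on $E_\xi$. The paper writes this as $E_t E_\xi^* - q^{-(\alpha_t,\xi)} E_\xi^* E_t = q^{-(\alpha_t,\xi)} [E_\xi, F_t]^* K_t$ and then shows $[E_\xi, F_t] \in U_q(\mathfrak{l})$ by expanding $E_\xi$ as a sum of monomials in the $E_i$ and using that $\alpha_t$ appears with multiplicity one.

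However, your final step contains a genuine error. You invoke \autoref{lem:adjoint-action} to conclude $F_t \triangleright E_\xi = c'_{t,\xi} E_{\xi-\alpha_t}$, but that corollary is stated (and proved, via Zwicknagl's theorem) only for $\alpha_k \in \Delta(\mathfrak{l})$; it describes the $U_q(\mathfrak{l})$-module structure on $\mathrm{span}\{E_\xi\}$. The root $\alpha_t$ is precisely the one simple root \emph{not} in $\Delta(\mathfrak{l})$, so $F_t \notin U_q(\mathfrak{l})$ and the corollary does not apply. In fact the conclusion you draw is false: a direct computation gives $F_t \triangleright E_\xi = -[E_\xi, F_t] K_t$, and already for $\xi = \alpha_t$ one has $[E_t, F_t] = (K_t - K_t^{-1})/(q_t - q_t^{-1}) \neq 0$. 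So $F_t \triangleright E_\xi$ is not zero in general, and your asserted ``stronger statement'' $E_t E_\xi^* - q^{-(\alpha_t,\xi)} E_\xi^* E_t = 0$ is not true; only $\sim 0$ holds.

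The fix is exactly what the paper does: show $[E_\xi, F_t] \in U_q(\mathfrak{l})$ directly. Write $E_\xi$ as a linear combination of monomials in the generators $E_i$; since $c_t = 1$ in $\xi = \sum c_i \alpha_i$, each monomial contains $E_t$ exactly once, and commuting $F_t$ past it replaces that single $E_t$ by $(K_t - K_t^{-1})/(q_t - q_t^{-1})$, leaving a product of elements of $U_q(\mathfrak{l})$. Plugging this into your identity $E_t \triangleright E_\xi^* = -q^{-(\alpha_t,\alpha_t)} (F_t \triangleright E_\xi)^*$ then gives $E_t \triangleright E_\xi^* \in U_q(\mathfrak{l})$, which is the desired $\sim 0$.
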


\begin{proof}
It is enough to prove that $[E_{\xi},  F_{t}] \sim 0$.
Indeed, as we will show below, we have the identity $E_t E_\xi^* - q^{-(\alpha_t, \xi)} E_\xi^* E_t = q^{-(\alpha_t, \xi)} [E_\xi, F_t]^* K_t$, hence the claim follows by observing that $K_t \in U_{q}(\mathfrak{l})$ and $U_{q}(\mathfrak{l})$ is invariant under $*$.
To show this identity let us consider
\[
[E_\xi, F_t]^* K_t = (K_t E_\xi F_{t} - K_t F_t E_\xi)^* = (q^{(\alpha_t, \xi)} E_\xi K_t F_t - K_t F_t E_\xi)^*.
\]
In the last step we have used the relation $K_t E_\xi = q^{(\alpha_t, \xi)} E_\xi K_t$. But then we have
\[
q^{-(\alpha_t, \xi)} [E_\xi, F_t]^* K_t
= q^{-(\alpha_t, \xi)} (q^{(\alpha_t, \xi)} E_\xi E_t^* - E_t^* E_\xi)^*
= E_t E_\xi^* - q^{-(\alpha_t, \xi)} E_\xi^* E_t.
\]

Now we will show that $[E_\xi, F_t] \sim 0$.
Write $\xi = \sum_{i = 1}^r c_i \alpha_i$. Then, using the grading by the root lattice, we conclude that $E_\xi$ is a sum of monomials of the form $E_{\sigma(1)}^{c_{\sigma(1)}} \cdots E_{\sigma(r)}^{c_{\sigma(r)}}$, where $\sigma$ denotes a permutation of $\{ 1, \cdots, r \}$.
Below we will consider in detail only the term $E_{1}^{c_{1}} \cdots E_{r}^{c_{r}}$. Recall that for any radical root $\xi$ the simple root $\alpha_t$ appears with multiplicity one, that is $c_t = 1$.
Since we have $[E_i, F_j] = 0$ for $i \neq j$, it follows that
\[
\begin{split}
[E_1^{c_1} \cdots E_r^{c_r}, F_t]
 & = E_1^{c_1} \cdots E_{t - 1}^{c_{t - 1}} [E_t, F_t] E_{t + 1}^{c_{t + 1}} \cdots E_r^{c_r}\\
 & = E_1^{c_1} \cdots E_{t - 1}^{c_{t - 1}} \frac{K_t - K_t^{-1}}{q_t - q_t^{-1}} E_{t + 1}^{c_{t + 1}} \cdots E_r^{c_r} \in U_{q}(\mathfrak{l}).
\end{split}
\]
The case of a general term $E_{\sigma(1)}^{c_{\sigma(1)}}  \cdots E_{\sigma(r)}^{c_{\sigma(r)}}$ is completely analogous.
\end{proof}

We are now ready to prove the main result of this section.

\begin{theorem}
\label{thm:comm-rel}
Let $\xi, \xi^\prime \in \Delta(\mathfrak{u}_+)$.
We have the commutation relations
\[
E_\xi E_{\xi^\prime}^* - q^{-(\xi, \xi^\prime)} E_{\xi^\prime}^* E_\xi \sim \sum_{\eta, \eta^\prime} c_{\xi, \xi^\prime}^{\eta, \eta^\prime} E_\eta^* E_{\eta^\prime},
\]
where in the sum we have the condition $\mathrm{ht}(\eta^\prime) < \mathrm{ht}(\xi)$.
\end{theorem}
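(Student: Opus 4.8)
The plan is to bootstrap from the basic case in \autoref{lem:basic-comm} (the case $\xi = \alpha_t$) to arbitrary radical roots $\xi$ by an induction on the height $\mathrm{ht}(\xi)$, using the adjoint action of $U_q(\mathfrak{l})$ to raise $\alpha_t$ up to $\xi$. The key structural input is that, in the cominuscule case, every radical root contains $\alpha_t$ with multiplicity exactly $1$; hence any $\xi \in \Delta(\mathfrak{u}_+)$ can be reached from $\alpha_t$ by successively adding simple roots $\alpha_k \in \Delta(\mathfrak{l})$, and by \autoref{lem:adjoint-action} each such step is implemented by $E_k \triangleright (-)$ (up to a nonzero scalar, which we must track but is harmless). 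So the induction step amounts to understanding how the adjoint action $E_k \triangleright (-)$ interacts with the quadratic expression $E_\xi E_{\xi'}^* - q^{-(\xi,\xi')} E_{\xi'}^* E_\xi$.

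\textbf{The mechanism of the induction step.}
The adjoint action is a derivation twisted by the coproduct: $E_k \triangleright (XY) = (E_k \triangleright X) Y + (K_k \triangleright X)(E_k \triangleright Y)$, and $K_k \triangleright X = K_k X K_k^{-1}$ scales root vectors by a power of $q$. Applying $E_k \triangleright (-)$ to the relation for $\xi$ (with $\mathrm{ht}(\xi)$ smaller, known by induction) and collecting terms, I expect to produce the relation for $\xi + \alpha_k$ \emph{plus} correction terms of two kinds: (i) terms where $E_k$ hits $E_{\xi'}^*$, which by the preceding lemma produces $(F_k \triangleright E_{\xi'})^* = (\text{scalar}) E_{\xi' - \alpha_k}^*$ — another quadratic term of the required form, with the height of the second factor unchanged or controlled; and (ii) terms coming from applying $E_k \triangleright (-)$ to the right-hand side $\sum c\, E_\eta^* E_{\eta'}$, which again, by the Leibniz rule and the two lemmas, stay of the form $E_{\tilde\eta}^* E_{\tilde\eta'}$. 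One also needs that $E_k \triangleright Z \in U_q(\mathfrak{l})$ whenever $Z \in U_q(\mathfrak{l})$, so the $\sim$-relation is preserved under the adjoint action — this follows since $U_q(\mathfrak{l})$ is an adjoint $U_q(\mathfrak{l})$-submodule. The scalar $q^{-(\xi,\xi')}$ transforms correctly into $q^{-(\xi+\alpha_k,\xi')}$ because $K_k \triangleright E_{\xi'}^* $ and $K_k \triangleright E_\xi$ contribute exactly the missing powers $q^{\pm(\alpha_k,\xi')}$; checking this bookkeeping is the routine heart of the argument.

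\textbf{Controlling the height condition.}
The one genuinely delicate point — and the step I expect to be the main obstacle — is verifying that all correction terms on the right-hand side satisfy the stated constraint $\mathrm{ht}(\eta') < \mathrm{ht}(\xi)$ after the induction step promotes $\xi$ to $\xi + \alpha_k$. The terms of type (i) have second factor $E_{\xi'-\alpha_k}$ or $E_{\xi'}$, whose height is bounded by $\mathrm{ht}(\xi') \le N$-type bounds but must be compared against $\mathrm{ht}(\xi+\alpha_k) = \mathrm{ht}(\xi)+1$; one should set up the induction so that the hypothesis for $\xi$ reads $\mathrm{ht}(\eta') < \mathrm{ht}(\xi)$, and then show the new terms have $\mathrm{ht}(\tilde\eta') \le \mathrm{ht}(\xi) = \mathrm{ht}(\xi+\alpha_k) - 1$. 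For type (ii), applying $E_k\triangleright(-)$ to $E_\eta^* E_{\eta'}$ raises $\mathrm{ht}(\eta')$ by at most $1$ (when $E_k$ hits $E_{\eta'}$) and otherwise leaves it unchanged or decreases it, so the bound propagates. A careful choice of what exactly the induction hypothesis asserts — plausibly phrased as ``all terms $E_\eta^* E_{\eta'}$ appearing have $\mathrm{ht}(\eta') < \mathrm{ht}(\xi)$, or are already absorbed into $U_q(\mathfrak{l})$'' — should make all four checks mechanical. I would also need the base case: when $\xi = \alpha_t$, \autoref{lem:basic-comm} gives the relation with an \emph{empty} right-hand side, and $\mathrm{ht}(\eta') < \mathrm{ht}(\alpha_t) = 1$ is vacuously the condition ``no terms,'' which is consistent.
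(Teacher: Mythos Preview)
Your proposal is correct and follows exactly the paper's argument: induction on $\mathrm{ht}(\xi)$ with base case \autoref{lem:basic-comm}, and inductive step obtained by applying $E_k \triangleright(-)$ to the known $q$-commutator relation, using the twisted Leibniz rule together with \autoref{lem:adjoint-action} and the invariance of $U_q(\mathfrak{l})$ under the adjoint action.

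One small sharpening you will need when you write it out: your description of the type-(i) correction is slightly off. When $E_k$ hits $E_{\xi'}^*$ you obtain (up to scalars) the combination $E_\xi E_{\xi'-\alpha_k}^* - q^{-(\xi,\xi'+\alpha_k)} E_{\xi'-\alpha_k}^* E_\xi$, which is \emph{not} a $q$-commutator (the exponent is wrong) and is not yet ``of the required form'' $E_\eta^* E_{\eta'}$. You must reapply the induction hypothesis to reorder $E_\xi E_{\xi'-\alpha_k}^*$; this leaves a residual term $(\text{const})\cdot E_{\xi'-\alpha_k}^* E_\xi$ whose second factor has height $\mathrm{ht}(\xi)$, not $\mathrm{ht}(\xi')$ as you wrote. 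That is precisely $\mathrm{ht}(\xi+\alpha_k)-1$, so the bound $\mathrm{ht}(\eta') < \mathrm{ht}(\xi+\alpha_k)$ is met --- but the controlling quantity throughout is $\mathrm{ht}(\xi)$, never $\mathrm{ht}(\xi')$.
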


\begin{proof}
We will proceed by induction over the height of $\xi$.
In the case $\mathrm{ht}(\xi) = 1$ we have only one radical root, namely the simple root $\alpha_t$.
Hence the result follows from \autoref{lem:basic-comm}.
We will now assume that the claim is true for all $\xi$ with $\mathrm{ht}(\xi) = n$.

We know that any radical root of height $n + 1$ can be written in the form $\xi + \alpha_k$, for some simple root $\alpha_k$ with $k \neq t$.
Our first step will be to obtain the commutation relation for the elements $E_{\xi + \alpha_k}$ and $E_{\xi^\prime}^*$ from that of the elements $E_\xi$ and $E_{\xi^\prime}^*$, using the adjoint action.
It is convenient at this point to introduce the $q$-commutator notation
\[
[E_\xi, E_{\xi^\prime}^*]_q = E_\xi E_{\xi^\prime}^* - q^{-(\xi, \xi^\prime)} E_{\xi^\prime}^* E_\xi.
\]
Acting with the element $E_k$ on this expression we get
\[
\begin{split}
E_k \triangleright [E_\xi, E_{\xi^\prime}^*]_q
 & = (E_k \triangleright E_\xi) E_{\xi^\prime}^* + (K_k \triangleright E_\xi) (E_k \triangleright E_{\xi^\prime}^*)\\
 & - q^{-(\xi, \xi^\prime)} (E_k \triangleright E_{\xi^\prime}^*) E_\xi - q^{-(\xi, \xi^\prime)} (K_k \triangleright E_{\xi^\prime}^*) (E_k \triangleright E_\xi).
\end{split}
\]
Using $K_k \triangleright E_\xi = q^{(\xi, \alpha_k)} E_\xi$ and $K_k \triangleright E_\xi^* = q^{-(\xi, \alpha_k)} E_\xi^*$ we rewrite this as
\[
\begin{split}
E_k \triangleright [E_\xi, E_{\xi^\prime}^*]_q
 & = (E_k \triangleright E_\xi) E_{\xi^\prime}^* - q^{-(\xi + \alpha_k, \xi^\prime)} E_{\xi^\prime}^* (E_k \triangleright E_\xi)\\
 & + q^{(\xi, \alpha_k)} E_\xi (E_k \triangleright E_{\xi^\prime}^*) - q^{-(\xi, \xi^\prime)} (E_k \triangleright E_{\xi^\prime}^*) E_\xi.
\end{split}
\]
Then using the action of $E_k$ as in \autoref{lem:adjoint-action} we obtain
\[
\begin{split}
E_k \triangleright [E_\xi, E_{\xi^\prime}^*]_q
 & = c_{k, \xi} (E_{\xi + \alpha_k} E_{\xi^\prime}^* - q^{-(\xi + \alpha_k, \xi^\prime)} E_{\xi^\prime}^* E_{\xi + \alpha_k})\\
 & - c_{k, \xi^\prime}^\prime q^{(\xi - \alpha_k, \alpha_k)} (E_\xi E_{\xi^\prime - \alpha_k}^* - q^{-(\xi, \xi^\prime + \alpha_k)} E_{\xi^\prime - \alpha_k}^* E_\xi).
\end{split}
\]
We are assuming that $\xi + \alpha_k \in \Delta(\mathfrak{u}_+)$, which guarantees that $c_{k, \xi} \neq 0$.
Hence we can divide by this factor and arrive at the identity
\[
[E_{\xi + \alpha_k}, E_{\xi^\prime}^*]_q = c_{k, \xi}^{-1} E_k \triangleright [E_\xi, E_{\xi^\prime}^*]_q + c_{k, \xi}^{-1} c_{k, \xi^\prime}^\prime q^{(\xi - \alpha_k, \alpha_k)} (E_\xi E_{\xi^\prime - \alpha_k}^* - q^{-(\xi, \xi^\prime + \alpha_k)} E_{\xi^\prime - \alpha_k}^* E_\xi).
\]
We have finally expressed $[E_{\xi + \alpha_k}, E_{\xi^\prime}^*]_q$ in terms of $[E_\xi, E_{\xi^\prime}^*]_q$, plus an additional term.
Notice that this second term does not have the form of a $q$-commutator.

To prove our claim we have to analyze these two terms. Let us start with the first one.
Since $\xi$ has height $n$, we can use the induction hypothesis and write
\[
[E_\xi, E_{\xi^\prime}^*]_q \sim \sum_{\eta, \eta^{\prime}} c_{\xi, \xi^\prime}^{\eta, \eta^\prime} E_\eta^* E_{\eta^\prime},
\]
where we have the condition $\mathrm{ht}(\eta^\prime) < \mathrm{ht}(\xi)$.
Then we compute
\[
E_k \triangleright (E_\eta^* E_{\eta^\prime}) = (E_k \triangleright E_\eta^*) E_{\eta^\prime} + (K_k \triangleright E_\eta^*) (E_k \triangleright E_{\eta^\prime}).
\]
The first summand is either zero or proportional to $E_{\eta - \alpha_k}^* E_{\eta^\prime}$ if $\eta - \alpha_k \in \Delta(\mathfrak{u}_+)$. Similarly the second summand is either zero or proportional to $E_{\eta}^* E_{\eta^\prime + \alpha_k}$ if $\eta^\prime + \alpha_k \in \Delta(\mathfrak{u}_+)$.
As a result, we can write again an identity of the form
\[
E_k \triangleright [E_\xi, E_{\xi^\prime}^*]_q \sim \sum_{\eta, \eta^{\prime}} b_{\xi, \xi^\prime}^{\eta, \eta^\prime} E_\eta^* E_{\eta^\prime},
\]
but now we have the condition $\mathrm{ht}(\eta^\prime) \leq \mathrm{ht}(\xi)$. Indeed, the sum on the right hand side may contain the term $E_\eta^* E_{\eta^\prime + \alpha_k}$ and we have the relation $\mathrm{ht}(\eta^\prime + \alpha_k) = \mathrm{ht}(\eta^\prime)+ 1$.

Now let us consider the second term in our expression for $[E_{\xi + \alpha_k}, E_{\xi^\prime}^*]_q$.
As for the first one, we can use the induction hypothesis to write
\[
E_\xi E_{\xi^\prime - \alpha_k}^* \sim q^{-(\xi, \xi^\prime - \alpha_k)} E_{\xi^\prime - \alpha_k}^* E_\xi + \sum_{\eta, \eta^\prime} c_{\xi, \xi^\prime - \alpha_k}^{\eta, \eta^\prime} E_\eta^* E_{\eta^\prime},
\]
with the condition $\mathrm{ht}(\eta^\prime) < \mathrm{ht}(\xi)$.
It follows that
\[
E_\xi E_{\xi^\prime - \alpha_k}^* - q^{-(\xi, \xi^\prime + \alpha_k)} E_{\xi^\prime - \alpha_k}^* E_\xi \sim (q^{-(\xi, \xi^\prime - \alpha_k)} - q^{-(\xi, \xi^\prime + \alpha_k)}) E_{\xi^\prime - \alpha_k}^* E_\xi + \sum_{\eta, \eta^\prime} c_{\xi, \xi^\prime - \alpha_k}^{\eta, \eta^\prime} E_\eta^* E_{\eta^\prime}.
\]
Notice that the term in parentheses does not vanish unless $q = 1$, in general.
Nevertheless, we can always rewrite this expression in the form
\[
E_\xi E_{\xi^\prime - \alpha_k}^* - q^{-(\xi, \xi^\prime + \alpha_k)} E_{\xi^\prime - \alpha_k}^* E_\xi \sim \sum_{\eta, \eta^{\prime}} \tilde{b}_{\xi, \xi^\prime - \alpha_k}^{\eta, \eta^\prime} E_\eta^* E_{\eta^\prime},
\]
but now we have the condition $\mathrm{ht}(\eta^\prime) \leq \mathrm{ht}(\xi)$. Indeed this sum contains the term $E_{\xi^\prime - \alpha_k}^* E_\xi$.

Finally, putting all these results together and relabeling our coefficients, we get
\[
[E_{\xi + \alpha_k}, E_{\xi^\prime}^*]_q \sim \sum_{\eta, \eta^{\prime}} c_{\xi + \alpha_k, \xi^\prime}^{\eta, \eta^\prime} E_\eta^* E_{\eta^\prime},
\]
with the condition $\mathrm{ht}(\eta^\prime) \leq \mathrm{ht}(\xi)$, that is $\mathrm{ht}(\eta^\prime) < \mathrm{ht}(\xi + \alpha_k)$.
This concludes the proof.
\end{proof}

\begin{remark}
It is shown in \cite{mat-comm} that the commutation relations between the elements $E_\xi$ and $F_{\xi^\prime}$ take a simpler form, at least for certain reduced decompositions. Namely we have $[E_\xi, F_{\xi^\prime}] \sim 0$, in perfect analogy with the classical setting.
\end{remark}

Later on we will need the following variant of the above result.

\begin{corollary}
\label{cor:comm-rel}
Let $\xi, \xi^\prime \in \Delta(\mathfrak{u}_+)$.
We have the commutation relations
\[
S^{-1}(E_{\xi^\prime})^* S^{-1}(E_\xi) - q^{-(\xi, \xi^\prime)} S^{-1}(E_\xi) S^{-1}(E_{\xi^\prime})^* \sim \sum_{\eta, \eta^\prime} c_{\xi, \xi^\prime}^{\eta^\prime, \eta} S^{-1}(E_{\eta^\prime}) S^{-1}(E_\eta)^*,
\]
where in the sum we have the condition $\mathrm{ht}(\eta^\prime) < \mathrm{ht}(\xi)$.
\end{corollary}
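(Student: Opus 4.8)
The plan is to obtain this statement by applying the inverse antipode $S^{-1}$ to the commutation relation of \autoref{thm:comm-rel}, using that $S^{-1}$ is a $\mathbb{C}$-linear algebra anti-automorphism of $U_q(\mathfrak{g})$. First I would record the two structural facts needed. (i) The subalgebra $U_q(\mathfrak{l})$ is a Hopf subalgebra: its generators $K_i^{\pm 1}$ and $E_j, F_j$ (with $j \in S$) have coproducts in $U_q(\mathfrak{l}) \otimes U_q(\mathfrak{l})$ and are sent to $U_q(\mathfrak{l})$ by $S$ and by $S^{-1}$ (up to signs and Cartan factors). In particular $S^{-1}$ stabilizes $U_q(\mathfrak{l})$, so it preserves the relation $\sim$. (ii) In a Hopf $*$-algebra one has $S(x^*) = (S^{-1}(x))^*$, equivalently $S^{-1}(x^*) = (S(x))^*$; moreover $S^2(E_\zeta)$ is a nonzero scalar multiple $\lambda_\zeta E_\zeta$ of any quantum root vector $E_\zeta$ (this is standard: $S^2$ is conjugation by a grouplike element and $E_\zeta$ is a weight vector, with $\lambda_\zeta = q^{-(2\rho,\zeta)}$, $\rho$ the half-sum of the positive roots), hence $S(E_\zeta) = \lambda_\zeta\, S^{-1}(E_\zeta)$. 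Combining these, and using that $\lambda_\zeta$ is real for $q \in \mathbb{R}$, one gets $S^{-1}(E_\zeta^*) = \lambda_\zeta\, S^{-1}(E_\zeta)^*$.

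With this in hand the argument is short. Applying $S^{-1}$ to the identity of \autoref{thm:comm-rel} and reversing all products, the left-hand side becomes $S^{-1}(E_{\xi'}^*) S^{-1}(E_\xi) - q^{-(\xi,\xi')} S^{-1}(E_\xi) S^{-1}(E_{\xi'}^*)$, which by (ii) equals $\lambda_{\xi'}$ times the left-hand side of the claimed identity; the right-hand side becomes $\sum_{\eta,\eta'} c_{\xi,\xi'}^{\eta,\eta'}\, S^{-1}(E_{\eta'}) S^{-1}(E_\eta^*) = \sum_{\eta,\eta'} \lambda_\eta\, c_{\xi,\xi'}^{\eta,\eta'}\, S^{-1}(E_{\eta'}) S^{-1}(E_\eta)^*$, while the correction term coming from $\sim$ stays in $U_q(\mathfrak{l})$ by (i). Dividing through by $\lambda_{\xi'}$ and absorbing the scalars $\lambda_\eta / \lambda_{\xi'}$ into the coefficients (which I would simply rename $c_{\xi,\xi'}^{\eta',\eta}$, to match the index order in the statement) yields the asserted relation modulo $U_q(\mathfrak{l})$.

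Finally I would check the height condition, which is also the only point that needs care. In \autoref{thm:comm-rel} the index carrying the bound, $\eta'$, labels the right factor $E_{\eta'}$; after conjugating by the anti-automorphism $S^{-1}$ it labels the left factor $S^{-1}(E_{\eta'})$, which is exactly where $\eta'$ sits in the statement above — this is why the two upper indices of the coefficient appear in the opposite order — and the bound $\mathrm{ht}(\eta') < \mathrm{ht}(\xi)$ is unaffected. The genuine subtlety is simply not to confuse $S^{-1}(E_\zeta^*)$ with $S^{-1}(E_\zeta)^*$: these differ by the scalar $\lambda_\zeta$, and tracking these scalars (or, equivalently, deciding once and for all to conjugate by $S$ rather than $S^{-1}$, which only swaps which factor carries the scalar) is the only real bookkeeping in the proof. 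Everything else is formal once one knows that $S^{-1}$ is an anti-automorphism stabilizing $U_q(\mathfrak{l})$.
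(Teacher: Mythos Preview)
Your proposal is correct and follows essentially the same route as the paper's proof: apply the anti-automorphism $S^{-1}$ to the relation of \autoref{thm:comm-rel}, use the identity $S^{-1}(E_\zeta^*) = (S(E_\zeta))^*$ together with $S^2(E_\zeta) = \lambda_\zeta E_\zeta$ for a nonzero real scalar $\lambda_\zeta$, and absorb the resulting scalars into relabeled coefficients. You are in fact slightly more explicit than the paper in noting that $S^{-1}$ stabilizes $U_q(\mathfrak{l})$ (so $\sim$ is preserved) and in verifying that the height bound on $\eta'$ survives the reversal of factors.
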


\begin{proof}
If we apply the anti-homomorphism $S^{-1}$ to \autoref{thm:comm-rel} we obtain
\[
S^{-1}(E_{\xi^\prime}^*) S^{-1}(E_\xi) - q^{-(\xi, \xi^\prime)} S^{-1}(E_\xi) S^{-1}(E_{\xi^\prime}^*) \sim \sum_{\eta, \eta^\prime} c_{\xi, \xi^\prime}^{\eta, \eta^\prime} S^{-1}(E_{\eta^\prime}) S^{-1}(E_\eta^*),
\]
From the general property $S^{-1} \circ * = * \circ S$ it follows that $S^{-1}(E_{\xi^\prime}^*) = S(E_{\xi^\prime})^*$.
Now for any element $X \in U_q(\mathfrak{g})$ we have the identity $S^2(X) = K_{2 \rho} X K_{2 \rho}^{-1}$, where $\rho$ is the half-sum of the positive roots. Hence $S^2(E_{\xi^\prime}) = q^{(\xi^\prime, 2 \rho)} E_\xi^\prime$.
We conclude that
\[
S^{-1}(E_{\xi^\prime}^*) = (S^{-1}(S^2(E_{\xi^\prime})))^* = q^{(\xi, 2 \rho)} S^{-1}(E_{\xi^\prime})^*.
\]
Upon relabeling the coefficients, we obtain the result.
\end{proof}

\section{On the Parthasarathy formula}
\label{sec:parthasarathy}

In this section we discuss the implications of the commutation relations of \autoref{thm:comm-rel}  for a quantum version of the Parthasarathy formula.
We start with a brief review of the classical case, from an appropriate perspective, and then move to the quantum setting.

\subsection{Dolbeault–Dirac operators}
A pair $(\mathfrak{g}, \mathfrak{p})$, where $\mathfrak{g}$ is a complex semisimple Lie algebra and $\mathfrak{p}$ is a parabolic Lie subalgebra, gives an infinitesimal description of the complex manifold $G / P$.
Here $G$ is the (connected, simply-connected) Lie group with Lie algebra $\mathfrak{g}$ and $P$ is the subgroup corresponding to $\mathfrak{p}$.
These spaces are referred to as \emph{generalized flag manifolds}.
Being complex manifolds, we have Dolbeault operators acting on differential forms.

Since the focus of this paper is on quantum Dolbeault-Dirac operators, in this discussion of the classical setting we will adopt a point of view which is well-suited for quantization.
Following \cite{qflag2}, we will define an element $\eth \in U(\mathfrak{g}) \otimes \mathrm{Cl}$,
where $U(\mathfrak{g})$ is the enveloping algebra of $\mathfrak{g}$ and $\mathrm{Cl}$ is the (complex) Clifford algebra of $\mathfrak{u}_{+} \oplus \mathfrak{u}_{-}$.
We will consider the Clifford algebra as represented on the exterior algebra $\Lambda(\mathfrak{u}_{+})$, the representation being given in terms of exterior and interior multiplication.
The operator $\eth$ to be defined below turns out to coincide, once the appropriate identifications are made, with the adjoint of the Dolbeault operator $\bar{\partial}: \Omega^{(0, k)} \to \Omega^{(0, k + 1)}$, see the discussion in \cite[Section 7]{qflag2}.
The adjoint is taken with respect to an invariant Hermitian inner product on $\Lambda(\mathfrak{u}_+)$.

Let us see how this operator is defined.
Recall that $\mathfrak{u}_+$ and $\mathfrak{u}_-$ are dual as $\mathfrak{l}$-modules, where $\mathfrak{l}$ is the Levi factor.
Pick a weight basis $\{v_i\}_i$ of $\mathfrak{u}_+$ and denote by $\{w_i\}_i$ the dual basis of $\mathfrak{u}_-$.
We can identify these bases with root vectors of $\mathfrak{g}$, in which case the dual pairing becomes the Killing form.
Write $\{E_{\xi_i}\}_i \in U(\mathfrak{g})$ for the root vectors corresponding to the radical roots and $\gamma_-(w)$ for interior multiplication by $w \in \mathfrak{u}_-$ on $\Lambda(\mathfrak{u}_+)$.
With these preparations, we define
\[
\eth = - \sum_{\xi_i \in \Delta(\mathfrak{u}_+)} E_{\xi_i} \otimes \gamma_-(w_i) \in U(\mathfrak{g}) \otimes \mathrm{Cl}.
\]
It does not depend on the choice of bases, as its definition involves dual bases.
The minus sign is a matter of convention.
Finally we define the \emph{Dolbeault–Dirac operator} as
\[
D = \eth + \eth^* \in U(\mathfrak{g}) \otimes \mathrm{Cl}.
\]
Here $*$ denotes the adjoint, which algebraically is implemented as a $*$-structure on $U(\mathfrak{g}) \otimes \mathrm{Cl}$. More specifically, on the first factor it corresponds to the $*$-structure on $U(\mathfrak{g})$ induced by the compact real form of $\mathfrak{g}$, while on the second factor it comes from the choice of a Hermitian inner product on $\Lambda(\mathfrak{u}_{+})$.
It can be seen that $D$ acts, up to a scalar, as the Dolbeault–Dirac operator on $G/Q$ formed with respect to the canonical $\mathrm{spin}^{c}$ structure.

\subsection{The Parthasarathy formula}

We will now discuss a simple way to compute the square of $D$ on a generalized flag manifold $G / P$ (strictly speaking $G / Q$, in our conventions).

Let $\{v_i\}_i$ and $\{w_i\}_i$ be as above, but furthermore we require that $\{v_i\}_i$ is orthonormal with respect to the invariant Hermitian inner product on $\mathfrak{u}_+$.
Then it extends to an orthonormal basis of $\Lambda(\mathfrak{u}_+)$ and moreover the adjoint of $\gamma_-(w_i)$, the operator of interior multiplication by $w_i \in \mathfrak{u}_-$, is given by $\gamma_+(v_i)$, the operator of exterior multiplication by $v_i \in \mathfrak{u}_+$.
With these conventions, we can write the Dolbeault-Dirac operator as
\[
D = \eth + \eth^* = \sum_{i} E_{\xi_i} \otimes \gamma_-(w_i) + \sum_{i} F_{\xi_i} \otimes \gamma_+(v_i).
\]
We have $\eth^2 = 0$, as a consequence of $\bar{\partial}^2 = 0$. Then we obtain
\[
D^2 = \sum_{i, j} E_{\xi_i} F_{\xi_j} \otimes \gamma_-(w_i) \gamma_+(v_j) + \sum_{i, j} F_{\xi_i} E_{\xi_j} \otimes \gamma_+(v_i) \gamma_-(w_j).
\]
For $A, B \in U(\mathfrak{g})$ we write $A \sim B$ if $A = B + C$ for some $C \in U(\mathfrak{l})$.
Recall that we have the commutation relation $[\mathfrak{u}_{+}, \mathfrak{u}_{-}] \subset \mathfrak{l}$. Therefore $[E_{\xi_i}, F_{\xi_j}] \in \mathfrak{l}$, so that we can write $F_{\xi_j} E_{\xi_i} \sim E_{\xi_i} F_{\xi_j}$.
Therefore after relabeling we get
\[
D^2 \sim \sum_{i, j} E_{\xi_i} F_{\xi_j} \otimes (\gamma_-(w_i) \gamma_+(v_j) + \gamma_+(v_j) \gamma_-(w_i)).
\]
We have the following commutation relations between interior and exterior multiplication
\[
\gamma_-(w) \gamma_+(v) + \gamma_+(v) \gamma_-(w) = \langle w, v \rangle 1,
\quad w \in \mathfrak{u}_-, \ v \in \mathfrak{u}_+.
\]
Since we are using dual bases we have  $\langle w_i, v_j \rangle = \delta_{i j}$. Hence
\[
D^2 \sim \sum_i E_{\xi_i} F_{\xi_j} \otimes 1.
\]
Finally using $B(E_{\xi_i}, F_{\xi_i}) = 1$, since we make the identification using dual bases, we observe that $C \sim \sum_i E_{\xi_i} F_{\xi_i}$, where $C$ is the quadratic Casimir of $\mathfrak{g}$.
Therefore we conclude that $D^{2} \sim C \otimes 1$.
We call this result the \emph{Parthasarathy formula}, although the full formula also includes the information about the terms belonging to $U(\mathfrak{l})$ that we neglected.
See \cite{partha} for the original reference and \cite{dirac-book} for a very readable textbook treatment.

\subsection{The quantum setting}

Let us now switch gears and move to the quantum setting.
We will consider the setup of \cite{qflag2}, where Dolbeault-Dirac operators on quantized irreducible flag manifolds are defined.
We will start with the definition of symmetric and exterior algebras according to \cite{bezw}.
Recall that, given two $U_q(\mathfrak{g})$-modules $V$ and $W$, there exists a \emph{braiding} $\hat{R}_{V W}: V \otimes W \to W \otimes V$, which gives an equivalence of representations.
The quantum symmetric and exterior algebras are then defined by
\[
S_q(V) = T(V) / \langle \ker(\sigma_{V V} + \mathrm{id}),\quad
\Lambda_q(V) = T(V) / \langle \ker(\sigma_{V V} - \mathrm{id}).
\]
Here $\ker(\sigma_{V V} \pm \mathrm{id})$ coincides with the span of the eigenspaces of the braiding $\hat{R}_{V V}$ with negative (respectively positive) eigenvalues.
While this definition is general, it is only for certain modules that these graded algebras have the same graded dimensions as in the classical case.
This is indeed the case for the modules in which we are interested.

Since the $U_q(\mathfrak{l})$-modules $\mathfrak{u}_\pm$ are irreducible, there there is a unique $U_q(\mathfrak{l})$-invariant pairing $\langle \cdot, \cdot \rangle : \mathfrak{u}_- \otimes \mathfrak{u}_+ \to \mathbb{C}$, up to a scalar.
It can be extended to a pairing $\langle \cdot, \cdot \rangle_k : \Lambda_q^k(\mathfrak{u}_-) \otimes \Lambda_q^k(\mathfrak{u}_+) \to \mathbb{C}$ as in \cite[Proposition 3.6]{qflag2}.
The module $\mathfrak{u}_+$ acts on $\Lambda_q(\mathfrak{u}_+)$ by left multiplication, denoted by $\gamma_+$.
We also obtain an action of $\mathfrak{u}_-$ on $\Lambda_q(\mathfrak{u}_+)$ by dualizing right multiplication on $\Lambda_q(\mathfrak{u}_-)$.
We denote this action by $\gamma_-$.
By \cite[Theorem 5.1]{qflag2} the map $\Lambda_q(\mathfrak{u}_-) \otimes \Lambda_q(\mathfrak{u}_+) \to \mathrm{End}_\mathbb{C}(\Lambda_q(\mathfrak{u}_+))$ is an equivariant isomorphism.
Hence the algebra $\mathrm{End}_\mathbb{C}(\Lambda_q(\mathfrak{u}_+))$, together with its factorization in terms of $\gamma_-$ and $\gamma_+$, can be considered a \emph{quantum Clifford algebra}.

\begin{remark}
This is the definition appearing in \cite[Definition 5.2]{qflag2}.
Nevertheless we argued in \cite{mat-proj} that, in order to recover the relations of the classical Clifford algebra, we have to choose appropriate scalars in the definition of $\langle \cdot, \cdot \rangle_k : \Lambda_q^k(\mathfrak{u}_-) \otimes \Lambda_q^k(\mathfrak{u}_+) \to \mathbb{C}$.
This small modification does not change the main results of the cited paper.
\end{remark}

Similarly to the classical case, we define the element
\[
\eth = \sum_i S^{-1}(E_{\xi_i}) \otimes \gamma_{-}(w_i) \in U_q(\mathfrak{g}) \otimes \mathrm{Cl}_q.
\]
Abstractly, $\eth$ can be seen as the Koszul differential $\sum_i v_i \otimes w_i \in S_q(\mathfrak{u}_+)^\mathrm{op} \otimes \Lambda_q(\mathfrak{u}_-)$.
This picture makes it apparent that $\eth^2 = 0$.
The differential is then embedded into $U_q(\mathfrak{g}) \otimes \mathrm{Cl}_q$ via $S^{-1} \otimes \gamma_-$. The antipode appears since we embed the opposite algebra $S_q(\mathfrak{u}_+)^\mathrm{op}$.

The \emph{Dolbeault-Dirac operator} is then defined to be $D = \eth + \eth^*$, in perfect analogy with the classical case. We will also adopt the following notation.

\begin{notation}
For $\xi_i \in \Delta(\mathfrak{u}_+)$ we will write $\mathcal{E}_i = S^{-1}(E_{\xi_i})$.
\end{notation}

\subsection{Computation of $D^2$}

We now come to the computation of $D^2$, with $D$ a Dolbeault-Dirac operator as above.
In particular, we want to investigate whether this operator takes a simple form, similar to that given by the Parthasarathy formula in the classical case.
Recall that classically we have $D^2 \sim C \otimes 1$, where $C$ is the quadratic Casimir of $\mathfrak{g}$.
In the quantum setting we do not expect to have such a simple formula, hence we will look for a weaker form.

As we discussed in the introduction of the paper, we would like to obtain an expression of the form $\sum_i C_i \otimes T_i$ for $D^2$, where $C_i \in U_q(\mathfrak{g})$ are central elements and $T_i \in \mathrm{Cl}_q$ are elements of the quantum Clifford algebra.
This would allow the computation of the spectrum of $D$ from the representation theory of $U_q(\mathfrak{g})$ and to check the compact resolvent condition.

\begin{remark}
We have shown in \cite{mat-proj} that for Dolbeault-Dirac operators on quantum projective spaces we obtain such a simple form.
The result in this case is that $D^2 \sim C \otimes T$, where $C \in U_q(\mathfrak{g})$ is a central elements and $T$ is a certain diagonal matrix.
\end{remark}

The commutation relations obtained in \autoref{thm:comm-rel} have some important consequences for the computation of $D^2$.
These are summarized in the next proposition.

\begin{proposition}
\label{prop:parthasarathy}
Let $D$ be a Dolbeault-Dirac operator as above.
\begin{enumerate}
\item We have $D^2 \sim \sum_{i, j} \mathcal{E}_i \mathcal{E}_j^* \otimes T_{i j}$, where the operators $T_{i j} \in \mathrm{Cl}_q$ are given by
\[
T_{i j} = \gamma_{-}(w_i) \gamma_{-}(w_j)^* + \sum_{k, l} b_{k, l}^{i, j} \gamma_{-}(w_k)^* \gamma_{-}(w_l),
\quad b_{k, l}^{i, j} \in \mathbb{C}.
\]
\item Suppose that $D$ satisfies the relation $D^2 \sim \sum_i C_i \otimes T_i$, where $C_i \in U_q(\mathfrak{g})$ are central elements and $T_i \in \mathrm{Cl}_q$. Then we must have $T_{i j} = 0$ for $i \neq j$.
\end{enumerate}
\end{proposition}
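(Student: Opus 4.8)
The plan is to start from the definition $D = \eth + \eth^*$ with $\eth = \sum_i \mathcal{E}_i \otimes \gamma_-(w_i)$, where $\mathcal{E}_i = S^{-1}(E_{\xi_i})$, and to expand $D^2 = \eth^2 + \eth \eth^* + \eth^* \eth + (\eth^*)^2$. Since $\eth$ is the image of a Koszul differential we have $\eth^2 = 0$, and applying $*$ gives $(\eth^*)^2 = 0$ as well, so $D^2 = \eth \eth^* + \eth^* \eth$. Writing this out, $\eth \eth^* = \sum_{i,j} \mathcal{E}_i \mathcal{E}_j^* \otimes \gamma_-(w_i) \gamma_-(w_j)^*$ and $\eth^* \eth = \sum_{i,j} \mathcal{E}_j^* \mathcal{E}_i \otimes \gamma_-(w_j)^* \gamma_-(w_i)$. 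The first sum is already in the desired form. For the second, the key input is \autoref{cor:comm-rel}: it lets us rewrite each product $S^{-1}(E_{\xi_j})^* S^{-1}(E_{\xi_i})$, i.e.\ $\mathcal{E}_j^* \mathcal{E}_i$, as a $q$-multiple of $\mathcal{E}_i \mathcal{E}_j^*$ plus a linear combination of terms $\mathcal{E}_{\eta'} \mathcal{E}_\eta^*$ with $\mathrm{ht}(\eta') < \mathrm{ht}(\xi_i)$, all modulo $U_q(\mathfrak{l})$. Substituting this back and collecting, every term of $D^2$ becomes of the form $\mathcal{E}_i \mathcal{E}_j^* \otimes (\text{some Clifford element})$; reading off the Clifford coefficient of $\mathcal{E}_i \mathcal{E}_j^*$ gives an expression $\gamma_-(w_i)\gamma_-(w_j)^* + \sum_{k,l} b^{i,j}_{k,l} \gamma_-(w_k)^* \gamma_-(w_l)$, which is exactly the claimed shape of $T_{ij}$. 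One technical point to handle carefully is that after the substitution the same product $\mathcal{E}_i \mathcal{E}_j^*$ may be produced both from the original $\eth \eth^*$ and from re-expanding the $\eth^* \eth$ terms, so the coefficients have to be grouped; this is bookkeeping rather than a genuine difficulty. This establishes part (1).

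For part (2), the idea is to exploit linear independence. Under the standing assumption that $q$ is not a root of unity, the elements $\{\mathcal{E}_i\}$ span a copy of $\mathfrak{u}_+$ inside $U_q(\mathfrak{g})$ (by Zwicknagl's result, transported through $S^{-1}$), and more to the point the products $\{\mathcal{E}_i \mathcal{E}_j^*\}_{i,j}$ are linearly independent modulo $U_q(\mathfrak{l})$ — this follows from the triangular/PBW structure of $U_q(\mathfrak{g})$, since $\mathcal{E}_i$ lies in the $\mathfrak{u}_-$-part and $\mathcal{E}_j^*$ in the $\mathfrak{u}_+$-part of the decomposition $U_q(\mathfrak{g}) \cong U_q(\mathfrak{u}_-) \otimes U_q(\mathfrak{l}) \otimes U_q(\mathfrak{u}_+)$ after using the explicit form of $*$ on root vectors. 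Now suppose $D^2 \sim \sum_\ell C_\ell \otimes T_\ell$ with each $C_\ell$ central. A central element of $U_q(\mathfrak{g})$, expressed in the triangular decomposition, contributes to the "$\mathcal{E}_i \mathcal{E}_j^*$" components only in a diagonal fashion: the relevant consequence is that the off-diagonal part $\sum_{i \neq j} \mathcal{E}_i \mathcal{E}_j^* \otimes T_{ij}$ of the expression from part (1) must be expressible using central elements, and comparing with the $U_q(\mathfrak{l})$-weight grading forces it to vanish. Concretely, the weight of $\mathcal{E}_i \mathcal{E}_j^*$ is $\xi_j - \xi_i$, which is zero precisely when $\xi_i = \xi_j$; a central element is weight-zero, so its tensor-leg expansion can only involve weight-zero combinations $\mathcal{E}_i \mathcal{E}_i^*$ modulo $U_q(\mathfrak{l})$. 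Matching the weight-$(\xi_j - \xi_i)$ component of both sides of $D^2 \sim \sum_\ell C_\ell \otimes T_\ell$ for $i \neq j$ then yields $\mathcal{E}_i \mathcal{E}_j^* \otimes T_{ij} \sim 0$, and by the linear independence noted above we conclude $T_{ij} = 0$.

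I expect the main obstacle to be making the linear-independence / weight-decomposition argument in part (2) fully rigorous, rather than part (1), which is essentially a direct computation once \autoref{cor:comm-rel} is in hand. The subtle points are: (a) showing that the products $\mathcal{E}_i \mathcal{E}_j^*$ remain linearly independent \emph{after} quotienting by $U_q(\mathfrak{l})$ — this needs the PBW basis of $U_q(\mathfrak{g})$ relative to the parabolic together with the fact that $\mathcal{E}_j^* = S^{-1}(E_{\xi_j})^*$ sits in the $U_q(\mathfrak{u}_+)$ factor up to a Cartan twist; and (b) justifying that one may compare "weight components modulo $U_q(\mathfrak{l})$" — i.e.\ that the $U_q(\mathfrak{l})$-adjoint weight grading descends to the quotient and that centrality of $C_\ell$ indeed pins its contribution to weight zero. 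Both are standard structural facts about $U_q(\mathfrak{g})$, but they need to be invoked precisely; once they are, the vanishing of the off-diagonal $T_{ij}$ is immediate.
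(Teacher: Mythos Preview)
Your proposal is correct and follows essentially the same route as the paper: part (1) is handled exactly as in the paper via $\eth^2 = 0$, the expansion $D^2 = \eth\eth^* + \eth^*\eth$, and \autoref{cor:comm-rel}; part (2) is the same weight/centrality argument combined with PBW linear independence of the $\mathcal{E}_i\mathcal{E}_j^*$. Two harmless slips: $\mathcal{E}_i = S^{-1}(E_{\xi_i})$ lies in the $U_q(\mathfrak{u}_+)$ factor (up to a Cartan twist), not the $\mathfrak{u}_-$ part, and the weight of $\mathcal{E}_i\mathcal{E}_j^*$ is $\xi_i - \xi_j$ rather than $\xi_j - \xi_i$; neither affects the argument.
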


\begin{proof}
(1) In \cite[Proposition 5.5]{qflag2} it is proven that $\eth^2 = 0$. Therefore
\[
D^2 = \sum_{i, j} \mathcal{E}_i \mathcal{E}_j^* \otimes \gamma_{-}(w_i) \gamma_{-}(w_j)^* + \sum_{i, j} \mathcal{E}_j^* \mathcal{E}_i \otimes \gamma_{-}(w_j)^* \gamma_{-}(w_i).
\]
We rewrite this expression using the commutation relations given in \autoref{cor:comm-rel}, keeping in mind that $\mathcal{E}_i = S^{-1}(E_{\xi_i})$.
Then we obtain
\[
\begin{split}
D^2
& \sim \sum_{i, j} \mathcal{E}_i \mathcal{E}_j^* \otimes (\gamma_{-}(w_i) \gamma_{-}(w_j)^* + q^{-(\xi_i, \xi_j)} \gamma_{-}(w_j)^* \gamma_{-}(w_i)) \\
& + \sum_{i, j, k, l} c_{i, j}^{k, l} \mathcal{E}_k \mathcal{E}_l^* \otimes \gamma_{-}(w_j)^* \gamma_{-}(w_i).
\end{split}
\]
Upon relabeling the sum in the second term we can rewrite this as
\[
D^2 \sim \sum_{i, j} \mathcal{E}_i \mathcal{E}_j^* \otimes \left( \gamma_{-}(w_i) \gamma_{-}(w_j)^* + \sum_{k, l} b_{k, l}^{i, j} \gamma_{-}(w_k)^* \gamma_{-}(w_l) \right).
\]
Notice that the term $q^{-(\xi_i, \xi_j)} \gamma_{-}(w_j)^* \gamma_{-}(w_i)$ is included in the sum.
Then defining the term in parentheses as $T_{i j}$ we obtain the result.

(2) Suppose that we have the relation $D^2 \sim \sum_i C_i \otimes T_i$, where $C_i \in U_q(\mathfrak{g})$ are central elements and $T_i \in \mathrm{Cl}_q$.
Comparing with the expression $D^2 \sim \sum_{i, j} \mathcal{E}_i \mathcal{E}_j^* \otimes T_{i j}$ obtained above, we conclude that the elements $C_i$ are linear combinations of the elements $\mathcal{E}_i \mathcal{E}_j^*$.

Now recall that we have the relations $K_k E_\xi = q^{(\alpha_k, \xi)} E_\xi K_k$ and $K_k E_\xi^* = q^{-(\alpha_k, \xi)} E_\xi^* K_k$, with similar relations holding for $\mathcal{E}_i$ and $\mathcal{E}_i^*$.
Then the element $\mathcal{E}_i \mathcal{E}_j^*$ does not commute with the generators $\{K_k\}_k$ unless $i = j$.
It is easy to see, using the PBW theorem for $U_q(\mathfrak{g})$, that the vectors $\{ \mathcal{E}_i \mathcal{E}_j^* \}_{i, j}$ are linearly independent.
Therefore such terms can not appear if we assume the relation $D^2 \sim \sum_i C_i \otimes T_i$.
This is possible if and only if $T_{i j} = 0$ for $i \neq j$.
\end{proof}

The upshot is that, in order to have a result of the form $D^2 \sim \sum_i C_i \otimes T_i$, we need to have some quadratic relations in the quantum Clifford algebra.
This is because we should have $T_{i j} = 0$ for $i \neq j$ and $T_{i j}$ are quadratic expressions of $\gamma_-(w_i)$ and their adjoints.

The rest of the paper will be devoted to prove the following claim: we can find a quantum Clifford algebra coming from an irreducible flag manifold such that we do not have these quadratic relations.
Hence we do not get an analogue of the Parthasarathy formula for the class of quantized irreducible flag manifolds, as one might have hoped for.

\section{Braiding and exterior algebras}
\label{sec:braiding}

From this section on we will focus on the case of the Lagrangian Grassmannian $LG(2,4)$.
The aim of this section is to determine explicitely the braiding corresponding to the nilradical $\mathfrak{u}_+$, which corresponds to the adjoint module of $U_q(\mathfrak{sl}(2))$. This in turn will give the relations for the exterior algebras $\Lambda_q(\mathfrak{u}_+)$ and $\Lambda_q(\mathfrak{u}_-)$. We will also compute their pairing.

\subsection{Lagrangian Grassmannian}

The Lagrangian Grassmannian $LG(2,4)$ is the irreducible flag manifold obtained from $C_2 = \mathfrak{sp}(4)$ by removing the simple root $\alpha_2$, see \autoref{tab:class-comin}. In our conventions $\alpha_1$ is the short root and $\alpha_2$ is the long root. The positive roots are
\[
\alpha_1, \ \alpha_2, \ \alpha_1
+ \alpha_2, \ 2 \alpha_1 + \alpha_2.
\]
Removing the long root $\alpha_2$ corresponds to $S = \Pi \backslash \{ \alpha_2 \} = \{ \alpha_1 \}$. Then we have
\[
\Delta(\mathfrak{l}) = \{ \pm \alpha_1 \}, \quad
\Delta(\mathfrak{u}_+) =  \{ \alpha_2, \ \alpha_1
+ \alpha_2, \ 2 \alpha_1 + \alpha_2 \}.
\]
By definition we have $\mathfrak{l} = \mathfrak{g}_{-\alpha_1} \oplus \mathfrak{h} \oplus \mathfrak{g}_{\alpha_1}$, from which we obtain the isomorphism $\mathfrak{l} \cong \mathfrak{gl}(2)$ with semisimple part $\mathfrak{sl}(2)$. We have that $\mathfrak{u}_+$ is a $3$-dimensional simple $\mathfrak{sl}(2)$-module, hence it can be identified with the adjoint representation.

\begin{remark}
There is an isomorphism of the quadric $\mathbb{Q}^3$ with $LG(2,4)$.
Indeed we have the low-dimensional isomorphism $B_2 \cong C_2$ and we are removing the long root from both Dynkin diagrams.
Hence it would be equivalent for our purposes to consider this case.
\end{remark}

\subsection{The braiding}

We will now determine explicitely the braiding $\hat{R}$ corresponding to the adjoint representation of $U_{q}(\mathfrak{sl}(2))$, corresponding to the module $\mathfrak{u}_+$.
We denote by $\alpha$ the simple root of $\mathfrak{sl}(2)$ and
by $\{K,E,F\}$ the generators of $U_{q}(\mathfrak{sl}(2))$.

\begin{notation}
\label{not:adjoint-action}
Denote by $\mathfrak{u}_+$ the vector space spanned by $v_{1}$, $v_{0}$ and $v_{-1}$.
We realize the adjoint representation of $U_{q}(\mathfrak{sl}(2))$ on this vector space by the formulae
\[
\begin{gathered}
K v_{1} = q^{2} v_{1}, \quad K v_{0} = v_{0}, \quad K v_{-1} = q^{-2} v_{-1}, \\
E v_{1} = 0, \quad E v_{0} = [2]^{1/2} v_{1}, \quad E v_{-1} = [2]^{1/2} v_{0}, \\
F v_{1} = [2]^{1/2} v_{0}, \quad F v_{0} = [2]^{1/2} v_{-1}, \quad F v_{-1} = 0.
\end{gathered}
\]
With these conventions $v_{1}$ is the highest weight vector and $v_{-1}$ is the lowest weight vector.
\end{notation}

Recall that the braiding $\hat{R}_{V, W} : V \otimes W \to W \otimes V$ is uniquely determined by the relation
\[
\hat{R}_{V,W} (v \otimes w) = q^{(\mathrm{wt}(v), \mathrm{wt}(w))} w \otimes v + \sum_i w_i \otimes v_i,
\]
where $\mathrm{wt}(w_i) > \mathrm{wt}(w)$ and $\mathrm{wt}(v_i) < \mathrm{wt}(v)$.
Concretely we can start from the highest weight vectors and obtain the other values using the action of the quantized enveloping algebra.

\begin{proposition}
The braiding $\hat{R}: \mathfrak{u}_+ \otimes \mathfrak{u}_+ \to \mathfrak{u}_+ \otimes \mathfrak{u}_+$, corresponding to the adjoint representation of $U_q(\mathfrak{sl}(2))$, is given by the formulae
\[
\begin{gathered}
\hat{R}(v_1 \otimes v_1) = q^2 v_1 \otimes v_1, \quad
\hat{R}(v_1 \otimes v_0) = v_0 \otimes v_1 + (q^2 - q^{-2}) v_1 \otimes v_0, \\
\hat{R}(v_1 \otimes v_{-1}) = q^{-2} v_{-1} \otimes v_1 + q^{-1} (q - q^{-1}) (q^2 - q^{-2}) v_1 \otimes v_{-1} + (q^2 - q^{-2}) v_0 \otimes v_0, \\
\hat{R}(v_0 \otimes v_1) = v_1 \otimes v_0, \quad
\hat{R}(v_0 \otimes v_0) = v_0 \otimes v_0 + q^{-2} (q^2 - q^{-2}) v_1 \otimes v_{-1}, \\
\hat{R}(v_0 \otimes v_{-1}) = v_{-1} \otimes v_0 + (q^2 - q^{-2}) v_0 \otimes v_{-1}, \quad
\hat{R}(v_{-1} \otimes v_1) = q^{-2} v_1 \otimes v_{-1}, \\
\hat{R}(v_{-1} \otimes v_0) = v_0 \otimes v_{-1}, \quad
\hat{R}(v_{-1} \otimes v_{-1}) = q^2 v_{-1} \otimes v_{-1}.
\end{gathered}
\]
\end{proposition}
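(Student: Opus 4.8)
The plan is to compute $\hat{R}$ directly from the two properties of the braiding recalled just above the statement: it is a morphism of $U_q(\mathfrak{sl}(2))$-modules from $\mathfrak{u}_+ \otimes \mathfrak{u}_+$ to itself, and on a weight vector $v \otimes w$ it equals $q^{(\mathrm{wt}(v), \mathrm{wt}(w))}\, w \otimes v$ up to a correction supported on terms $w_i \otimes v_i$ with $\mathrm{wt}(w_i) > \mathrm{wt}(w)$ and $\mathrm{wt}(v_i) < \mathrm{wt}(v)$. The first observation is that this correction vanishes whenever the second tensor factor is the highest weight vector $v_1$, since no weight of $\mathfrak{u}_+$ is strictly larger than $\mathrm{wt}(v_1)$. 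Because $(\mathrm{wt}(v_k), \mathrm{wt}(v_1))$ equals $2$, $0$, $-2$ for $k = 1, 0, -1$, this yields at once
\[
\hat{R}(v_1 \otimes v_1) = q^2\, v_1 \otimes v_1, \qquad
\hat{R}(v_0 \otimes v_1) = v_1 \otimes v_0, \qquad
\hat{R}(v_{-1} \otimes v_1) = q^{-2}\, v_1 \otimes v_{-1},
\]
with no further computation.

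I would then propagate to the remaining six basis vectors using that $\hat{R}$ commutes with the action of $F$ through the coproduct $\Delta(F) = F \otimes K^{-1} + 1 \otimes F$. Applying $\Delta(F)$ to $v \otimes v_1$ and using the action of \autoref{not:adjoint-action} gives $\Delta(F)(v \otimes v_1) = q^{-2}(Fv) \otimes v_1 + [2]^{1/2}\, v \otimes v_0$; since $\hat{R} \circ \Delta(F) = \Delta(F) \circ \hat{R}$, applying $\hat{R}$ to both sides leaves only $\hat{R}(v \otimes v_0)$ unknown — the term $\hat{R}((Fv) \otimes v_1)$ has $v_1$ in its second slot (or is zero), and the right-hand side involves only the three values already found — so one solves the linear relation for $\hat{R}(v \otimes v_0)$. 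Running this for $v = v_1, v_0, v_{-1}$ gives $\hat{R}(v_k \otimes v_0)$; repeating the same step with $\Delta(F)$ applied to $v \otimes v_0$, where now $\hat{R}((Fv) \otimes v_0)$ is known from the round just completed, gives the last three values $\hat{R}(v_k \otimes v_{-1})$. After elementary $q$-algebra, e.g. $1 - q^{-4} = q^{-2}(q^2 - q^{-2})$ and $q^2 - q^{-2} - 1 + q^{-4} = q^{-1}(q - q^{-1})(q^2 - q^{-2})$, one obtains exactly the stated formulae.

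I do not expect a genuine obstacle here: the statement is a finite computation dictated by the structure of the adjoint module of $U_q(\mathfrak{sl}(2))$, and the only real issue is organizing the recursion so that every step reduces to already-known values — which the order ``walk the second tensor factor down from $v_1$ to $v_0$ to $v_{-1}$ via $\Delta(F)$'' achieves — together with care in tracking the factors $q^{\mp 2}$ produced when $K^{\pm 1}$ acts on a weight vector and in dividing out $[2]^{1/2}$ (legitimate for generic $q$, the final formulae being Laurent polynomials in $q$). As an independent check I would verify compatibility with $\Delta(E)$ on one or two vectors, or equivalently confirm that $\hat{R}$ acts as $q^2$, $-q^{-2}$, $q^{-4}$ on the three isotypic components of $\mathfrak{u}_+ \otimes \mathfrak{u}_+ \cong V_4 \oplus V_2 \oplus V_0$; besides cross-checking all coefficients, this identifies the $\hat{R}$-eigenspaces that enter the definition of $\Lambda_q(\mathfrak{u}_+)$ in the next section.
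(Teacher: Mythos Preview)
Your proposal is correct and follows essentially the same approach as the paper: both use the triangular form of $\hat{R}$ to read off the values where no correction terms can appear, and then propagate to the remaining values via the intertwining relation $\hat{R}\circ\Delta(F)=\Delta(F)\circ\hat{R}$. The only organizational difference is that the paper also invokes the dual observation that the correction vanishes when the \emph{first} tensor factor is the lowest weight vector $v_{-1}$, obtaining five values for free and leaving four to compute, whereas you start from three and compute six in two rounds; this is a matter of bookkeeping rather than method.
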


\begin{proof}
We start with some general considerations.
Let $w_{\mathrm{hw}}$ be the highest weight vector of $W$. Then
there is no $w_i$ such that $\mathrm{wt}(w_i) > \mathrm{wt}(w_{\mathrm{hw}})$.
Hence we obtain
\[
\hat{R}_{V,W} (v \otimes w_{\mathrm{hw}}) = q^{(\mathrm{wt}(v), \mathrm{wt} (w_{\mathrm{hw}}))} w_{\mathrm{hw}} \otimes v.
\]
Similarly, let $v_{\mathrm{lw}}$ be the lowest weight vector of $V$.
Then there is no $v_i$ such that $\mathrm{wt}(v_i) < \mathrm{wt}(v_{\mathrm{lw}})$.
Therefore we conclude that
\[
\hat{R}_{V,W} (v_{\mathrm{lw}} \otimes w) = q^{(\mathrm{wt}(v_{\mathrm{lw}}), \mathrm{wt}(w))} w\otimes v_{\mathrm{lw}}.
\]
We apply the arguments above to the case of the adjoint representation.
In our notations $v_{1}$ is the highest weight vector and $v_{-1}$
is the lowest weight vector. We obtain immediately
\[
\begin{gathered}\hat{R}(v_{1}\otimes v_{1})=q^{2}v_{1}\otimes v_{1},\quad\hat{R}(v_{0}\otimes v_{1})=v_{1}\otimes v_{0},\quad\hat{R}(v_{-1}\otimes v_{1})=q^{-2}v_{1}\otimes v_{-1},\\
\hat{R}(v_{-1}\otimes v_{0})=v_{0}\otimes v_{-1},\quad\hat{R}(v_{-1}\otimes v_{-1})=q^{2}v_{-1}\otimes v_{-1}.
\end{gathered}
\]
Therefore it only remains to determine the values of
\[
\hat{R}(v_{1}\otimes v_{0}),\quad\hat{R}(v_{1}\otimes v_{-1}),\quad\hat{R}(v_{0}\otimes v_{0}),\quad\hat{R}(v_{0}\otimes v_{-1}).
\]
To determine these we can proceed as follows. Since $\hat{R}$ is
a module map, we have in particular that $F\hat{R}(v\otimes w)=\hat{R}F(v\otimes w)$.
Computing the LHS and RHS separately, we obtain equations determining
the missing elements.

Let us start by computing the action of $F$ on the tensor product.
Recall that $F(v\otimes w)=Fv\otimes K^{-1}w+v\otimes Fw$. Then on
the basis elements we get
\[
\begin{gathered}F(v_{1}\otimes v_{1})=[2]^{1/2}(q^{-2}v_{0}\otimes v_{1}+v_{1}\otimes v_{0}),\quad F(v_{1}\otimes v_{0})=[2]^{1/2}(v_{0}\otimes v_{0}+v_{1}\otimes v_{-1}),\\
F(v_{1}\otimes v_{-1})=[2]^{1/2}q^{2}v_{0}\otimes v_{-1},\quad F(v_{0}\otimes v_{1})=[2]^{1/2}(q^{-2}v_{-1}\otimes v_{1}+v_{0}\otimes v_{0}),\\
F(v_{0}\otimes v_{0})=[2]^{1/2}(v_{-1}\otimes v_{0}+v_{0}\otimes v_{-1}),\quad F(v_{0}\otimes v_{-1})=[2]^{1/2}q^{2}v_{-1}\otimes v_{-1},\\
F(v_{-1}\otimes v_{1})=[2]^{1/2}v_{-1}\otimes v_{0},\quad F(v_{-1}\otimes v_{0})=[2]^{1/2}v_{-1}\otimes v_{-1},\quad F(v_{-1}\otimes v_{-1})=0.
\end{gathered}
\]

\textbf{Case $\hat{R}(v_{1}\otimes v_{0})$}. We compute
\[
\begin{split}\hat{R}F(v_{1}\otimes v_{1}) & =[2]^{1/2}(q^{-2}v_{1}\otimes v_{0}+\hat{R}(v_{1}\otimes v_{0})),\\
F\hat{R}(v_{1}\otimes v_{1}) & =[2]^{1/2}(v_{0}\otimes v_{1}+q^{2}v_{1}\otimes v_{0}).
\end{split}
\]
From these we conclude that
\[
\hat{R}(v_{1}\otimes v_{0})=v_{0}\otimes v_{1}+(q^{2}-q^{-2})v_{1}\otimes v_{0}.
\]

\textbf{Case $\hat{R}(v_{0}\otimes v_{0})$}. We compute
\[
\begin{split}\hat{R}F(v_{0}\otimes v_{1}) & =[2]^{1/2}(q^{-4}v_{1}\otimes v_{-1}+\hat{R}(v_{0}\otimes v_{0})),\\
F\hat{R}(v_{0}\otimes v_{1}) & =[2]^{1/2}(v_{0}\otimes v_{0}+v_{1}\otimes v_{-1}).
\end{split}
\]
From these we conclude that
\[
\hat{R}(v_{0}\otimes v_{0})=v_{0}\otimes v_{0}+q^{-2}(q^{2}-q^{-2})v_{1}\otimes v_{-1}.
\]

\textbf{Case $\hat{R}(v_{0}\otimes v_{-1})$}. We compute
\[
\hat{R}F(v_{0}\otimes v_{0})=[2]^{1/2}(v_{0}\otimes v_{-1}+\hat{R}(v_{0}\otimes v_{-1})).
\]
For the other term instead we get 
\[
\begin{split}F\hat{R}(v_{0}\otimes v_{0}) & =F(v_{0}\otimes v_{0})+q^{-2}(q^{2}-q^{-2})F(v_{1}\otimes v_{-1})\\
 & =[2]^{1/2}(v_{-1}\otimes v_{0}+(q^{2}+1-q^{-2})v_{0}\otimes v_{-1}).
\end{split}
\]
Therefore we obtain
\[
\hat{R}(v_{0}\otimes v_{-1})=v_{-1}\otimes v_{0}+(q^{2}-q^{-2})v_{0}\otimes v_{-1}.
\]

\textbf{Case $\hat{R}(v_{1}\otimes v_{-1})$}. This is the most complicated
case. First we compute
\[
\begin{split}\hat{R}F(v_{1}\otimes v_{0}) & =[2]^{1/2}(\hat{R}(v_{0}\otimes v_{0})+\hat{R}(v_{1}\otimes v_{-1}))\\
 & =[2]^{1/2}(v_{0}\otimes v_{0}+q^{-2}(q^{2}-q^{-2})v_{1}\otimes v_{-1}+\hat{R}(v_{1}\otimes v_{-1})).
\end{split}
\]
On the other hand we have
\[
\begin{split}F\hat{R}(v_{1}\otimes v_{0}) & =F(v_{0}\otimes v_{1})+(q^{2}-q^{-2})F(v_{1}\otimes v_{0})\\
 & =[2]^{1/2}((q^{2}-q^{-2})v_{1}\otimes v_{-1}+(q^{2}+1-q^{-2})v_{0}\otimes v_{0}+q^{-2}v_{-1}\otimes v_{1}).
\end{split}
\]
From these equations we conclude that
\[
\hat{R}(v_{1}\otimes v_{-1})=q^{-2}v_{-1}\otimes v_{1}+q^{-1}(q-q^{-1})(q^{2}-q^{-2})v_{1}\otimes v_{-1}+(q^{2}-q^{-2})v_{0}\otimes v_{0}.
\qedhere
\]
\end{proof}

\begin{remark}
It is straightforward, although tedious, to verify directly in terms of the formulae given above that the braiding $\hat{R}$ satisfies the Yang-Baxter equation
\[
\hat{R}_{12}\hat{R}_{23}\hat{R}_{12}=\hat{R}_{23}\hat{R}_{12}\hat{R}_{23},
\]
where we use the standard leg-numbering notation.
\end{remark}

\subsection{The algebra $\Lambda_q(\mathfrak{u}_+)$}

We will now use the braiding $\hat{R}$ to determine the relations of the quantum exterior algebra corresponding to $\mathfrak{u}_+$.
As we have recalled in a previous section, by definition the quantum exterior algebra $\Lambda_q(V)$ is the quotient of the tensor algebra $T(V)$ by the two-sided ideal generated by quantum symmetric $2$-tensors.
These are defined in terms of the braiding $\hat{R}_V$ corresponding to $V$. We denote by $S_q^2 V$ (respectively $\Lambda_q^2 V$)
the span of the eigenvectors of $\hat{R}_V$ with positive (respectively
negative) eigenvalues.

Therefore to proceed we will need the eigenvalues and eigenvectors of the braiding $\hat{R}$.

\begin{lemma}
The eigenvectors and eigenvalues of $\hat{R}$ are given by
\[
\begin{gathered}
\{v_{1} \otimes v_{1}, \ q^{2}\}, \quad
\{v_{1} \otimes v_{0} + q^{-2} v_{0} \otimes v_{1}, \ q^{2}\}, \quad
\{v_{1} \otimes v_{0} - q^{2} v_{0} \otimes v_{1}, \ -q^{-2}\}, \\
\{v_{1} \otimes v_{-1} + q^{-4} v_{-1} \otimes v_{1} + q^{-1}(q + q^{-1}) v_{0} \otimes v_{0}, \ q^{2}\}, \\
\{v_{1} \otimes v_{-1} - v_{-1} \otimes v_{1} - q(q - q^{-1}) v_{0} \otimes v_{0}, \ -q^{-2}\}, \\
\{v_{1} \otimes v_{-1} + q^{2} v_{-1}\otimes v_{1} - q^{2} v_{0} \otimes v_{0}, \ q^{-4}\}, \\
\{v_{0} \otimes v_{-1} + q^{-2} v_{-1} \otimes v_{0}, \ q^{2}\}, \quad
\{v_{0} \otimes v_{-1} - q^{2} v_{-1} \otimes v_{0},\ -q^{-2}\}, \quad
\{v_{-1} \otimes v_{-1}, \ q^{2}\}.
\end{gathered}
\]
\end{lemma}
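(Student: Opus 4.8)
The plan is to diagonalize the braiding $\hat{R}$ block by block, using the weight decomposition of $\mathfrak{u}_+ \otimes \mathfrak{u}_+$. Since $\hat{R}$ is a $U_q(\mathfrak{sl}(2))$-module map, it preserves weight spaces, and the weights of $\mathfrak{u}_+ \otimes \mathfrak{u}_+$ are $4, 2, 0, -2, -4$ with multiplicities $1, 2, 3, 2, 1$ respectively. The one-dimensional weight spaces (weights $\pm 4$) are spanned by $v_1 \otimes v_1$ and $v_{-1} \otimes v_{-1}$, which by the formulae in the previous proposition are already eigenvectors with eigenvalue $q^2$. So the real work is confined to the weight-$2$ (and by symmetry weight-$(-2)$) two-dimensional block and the weight-$0$ three-dimensional block.

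For the weight-$2$ block, spanned by $v_1 \otimes v_0$ and $v_0 \otimes v_1$, I would simply write the $2 \times 2$ matrix of $\hat{R}$ from the formulae $\hat{R}(v_1 \otimes v_0) = v_0 \otimes v_1 + (q^2 - q^{-2}) v_1 \otimes v_0$ and $\hat{R}(v_0 \otimes v_1) = v_1 \otimes v_0$, and find its eigenvalues and eigenvectors. The characteristic polynomial factors as $(\lambda - q^2)(\lambda + q^{-2})$, giving the claimed eigenvectors $v_1 \otimes v_0 + q^{-2} v_0 \otimes v_1$ (eigenvalue $q^2$) and $v_1 \otimes v_0 - q^2 v_0 \otimes v_1$ (eigenvalue $-q^{-2}$). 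The weight-$(-2)$ block is handled identically, or one can invoke the symmetry $v_i \leftrightarrow v_{-i}$ which the formulae respect. For the weight-$0$ block, spanned by $v_1 \otimes v_{-1}$, $v_0 \otimes v_0$, $v_{-1} \otimes v_1$, I would again assemble the $3 \times 3$ matrix from the three relevant formulae for $\hat{R}$, then compute its eigenvalues. The general theory of the braiding on an irreducible module tells us that the eigenvalues of $\hat{R}$ on $V_\lambda \otimes V_\mu$ are $\pm q^{(\text{const})}$ governed by the decomposition into irreducibles: here $\mathfrak{u}_+ \otimes \mathfrak{u}_+$ decomposes as a sum of the irreducibles of highest weights $4, 2, 0$, and on each such summand $\hat{R}$ acts by $\pm q^{c_V - c_{V_\lambda} - c_{V_\mu}}$ where $c$ is the eigenvalue of the Casimir (up to normalization). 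This predicts the three eigenvalues $q^2, -q^{-2}, q^{-4}$ on the weight-$0$ block, which serves as a useful consistency check on the direct computation.

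The main obstacle — really just bookkeeping — is getting the eigenvector for the eigenvalue $q^{-4}$ right, since this is the component landing in the trivial submodule and it carries the most intricate coefficients. The cleanest route is to identify eigenvectors with known submodules: the eigenvalue-$q^2$ eigenvectors span $S_q^2 \mathfrak{u}_+$ (the "symmetric" part, isomorphic to the highest-weight-$4$ irreducible plus possibly more), while the eigenvalue-$(-q^{-2})$ and eigenvalue-$q^{-4}$ eigenvectors span $\Lambda_q^2 \mathfrak{u}_+$. In the classical limit $q \to 1$, $\Lambda^2$ of the $3$-dimensional adjoint representation is again $3$-dimensional (isomorphic to the adjoint), which matches: two eigenvectors with eigenvalue $-q^{-2}$ (from weights $\pm 2$) and one with eigenvalue $q^{-4}$ (from weight $0$). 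To pin down the weight-$0$ eigenvectors concretely, I would find the highest weight vector generating the trivial submodule (which must be $E$-invariant of weight $0$, forcing a specific linear combination of $v_1 \otimes v_{-1}$, $v_0 \otimes v_0$, $v_{-1} \otimes v_1$), and separately the weight-$0$ vector lying in the highest-weight-$4$ submodule by applying $F^2$ to $v_1 \otimes v_1$, up to normalization. The remaining eigenvector is then determined by orthogonality/complementarity within the block. Finally I would double-check all nine listed eigenpairs by substituting back into the explicit braiding formulae.
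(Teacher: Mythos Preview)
Your block-by-block diagonalization via the weight decomposition is exactly the right approach, and it is presumably what the paper has in mind: the paper's own proof reads, in its entirety, ``Follows from simple computations that we omit.'' So your proposal is not a different route so much as an explicit execution of the omitted computation, and the linear algebra you outline for the weight-$2$ and weight-$0$ blocks is correct.

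There is, however, one genuine slip in your side discussion that you should fix before it propagates. You write that the eigenvectors with eigenvalues $-q^{-2}$ and $q^{-4}$ together span $\Lambda_q^2 \mathfrak{u}_+$, and you count $\Lambda_q^2 \mathfrak{u}_+$ as ``two eigenvectors with eigenvalue $-q^{-2}$ (from weights $\pm 2$) and one with eigenvalue $q^{-4}$ (from weight $0$).'' That is not how the paper (following Berenstein--Zwicknagl) defines the quantum symmetric and exterior squares: $S_q^2 V$ is the span of the eigenspaces with \emph{positive} eigenvalues and $\Lambda_q^2 V$ the span of those with \emph{negative} eigenvalues. Since $q^{-4} > 0$, the $q^{-4}$-eigenvector (the trivial submodule) lies in $S_q^2 \mathfrak{u}_+$, not in $\Lambda_q^2 \mathfrak{u}_+$. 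Correspondingly, $\Lambda_q^2 \mathfrak{u}_+$ is spanned by the \emph{three} eigenvectors with eigenvalue $-q^{-2}$, one in each of the weight spaces $2$, $0$, $-2$; this is the quantum adjoint, matching the classical $\Lambda^2 \cong \mathrm{adjoint}$ for the $3$-dimensional representation. The $q^{-4}$-eigenvector instead joins the five $q^2$-eigenvectors to give $\dim S_q^2 \mathfrak{u}_+ = 6$. This misidentification does not affect the eigenvalue/eigenvector computation that the lemma actually asks for, but it would lead you astray in the next steps (the relations for $\Lambda_q(\mathfrak{u}_+)$), so it is worth correcting now.
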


\begin{proof}
Follows from simple computations that we omit.
\end{proof}

\begin{remark}
We have three different eigenvalues, namely $q^2$, $-q^{-2}$ and $q^{-4}$.
Hence $\hat{R}$ does not satisfy a quadratic relation, differently from the fundamental representation of $U_q(\mathfrak{sl}(2))$.
\end{remark}

In particular the spaces of symmetric and antisymmetric $2$-tensors are given by
\[
\begin{split}
S_q^2 \mathfrak{u}_+ =\mathrm{span} \{
& v_1 \otimes v_1, \
v_1 \otimes v_0 + q^{-2} v_0 \otimes v_1, \
v_0 \otimes v_{-1} + q^{-2} v_{-1}\otimes v_0, \\
& v_{-1} \otimes v_{-1}, \
v_1 \otimes v_{-1} + q^2 v_{-1} \otimes v_1 - q^2 v_0 \otimes v_0, \\
& v_1 \otimes v_{-1} + q^{-4} v_{-1} \otimes v_1 + q^{-1} (q + q^{-1}) v_0 \otimes v_0\}, \\
\Lambda_q^2 \mathfrak{u}_+ =\mathrm{span}\{
& v_1 \otimes v_0 - q^2 v_0 \otimes v_1, \
v_0 \otimes v_{-1} - q^2 v_{-1} \otimes v_0, \\
& v_1 \otimes v_{-1} - v_{-1} \otimes v_1 - q (q - q^{-1}) v_0 \otimes v_0 \}.
\end{split}
\]

We are now ready to derive the relations for the exterior algebra.

\begin{proposition}
\label{prop:relations-UP}
The algebra $\Lambda_q (\mathfrak{u}_+)$ has the relations
\[
\begin{gathered}
v_1 \wedge v_1 = 0, \quad
v_0 \wedge v_1 = - q^2 v_1 \wedge v_0, \quad
v_0 \wedge v_0 = - q^{-1} (q - q^{-1}) v_1 \wedge v_{-1}, \\
v_{-1} \wedge v_1 = - v_1 \wedge v_{-1}, \quad
v_{-1} \wedge v_0 = -q^2 v_0 \wedge v_{-1}, \quad
v_{-1} \wedge v_{-1} = 0.
\end{gathered}
\]
\end{proposition}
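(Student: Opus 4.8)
The plan is to rewrite the defining relations of $\Lambda_q(\mathfrak{u}_+)$ in terms of the weight basis $\{v_1,v_0,v_{-1}\}$. By construction $\Lambda_q(\mathfrak{u}_+)$ is the quotient of $T(\mathfrak{u}_+)$ by the two-sided ideal generated by $S_q^2\mathfrak{u}_+$, so its degree-two relations are exactly the vanishing in the quotient of the six spanning vectors of $S_q^2\mathfrak{u}_+$ listed above; it therefore suffices to translate each of those six identities into a relation among the products $v_a\wedge v_b$. Four of the spanning vectors are binomials, and setting $v_1\otimes v_1$, $v_{-1}\otimes v_{-1}$, $v_1\otimes v_0 + q^{-2}v_0\otimes v_1$ and $v_0\otimes v_{-1} + q^{-2}v_{-1}\otimes v_0$ to zero gives at once $v_1\wedge v_1 = 0$, $v_{-1}\wedge v_{-1} = 0$, $v_0\wedge v_1 = -q^2\,v_1\wedge v_0$ and $v_{-1}\wedge v_0 = -q^2\,v_0\wedge v_{-1}$, which are four of the six stated relations.

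The two remaining spanning vectors of $S_q^2\mathfrak{u}_+$ involve only $v_1\otimes v_{-1}$, $v_{-1}\otimes v_1$ and $v_0\otimes v_0$, so their vanishing produces a homogeneous linear system in the three products $v_1\wedge v_{-1}$, $v_{-1}\wedge v_1$, $v_0\wedge v_0$. Subtracting the two equations eliminates $v_1\wedge v_{-1}$ and leaves $(q^2+1+q^{-2})\,v_0\wedge v_0 = (q^2-q^{-4})\,v_{-1}\wedge v_1$; dividing through by the common factor $q^4+q^2+1$, which is nonzero since $q$ is not a root of unity, yields $v_0\wedge v_0 = (1-q^{-2})\,v_{-1}\wedge v_1$. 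Feeding this back into either of the two original equations forces $v_{-1}\wedge v_1 = -v_1\wedge v_{-1}$, and hence $v_0\wedge v_0 = -q^{-1}(q-q^{-1})\,v_1\wedge v_{-1}$. These are the last two relations in the statement.

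Finally, as a consistency check one verifies that the six relations collapse the $9$-dimensional space $\mathfrak{u}_+\otimes\mathfrak{u}_+$ onto the three-dimensional span of $v_1\wedge v_0$, $v_0\wedge v_{-1}$, $v_1\wedge v_{-1}$, in agreement with $\dim\Lambda_q^2\mathfrak{u}_+ = 3$ read off from the eigenvector list, so nothing has been lost and the displayed list is a complete presentation in degree two. The only step that needs any care is the $q$-arithmetic in the second paragraph — spotting the factor $q^4+q^2+1$ and cancelling it using genericity of $q$; everything else is direct bookkeeping with the braiding formulae already computed.
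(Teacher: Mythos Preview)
Your proof is correct and follows essentially the same approach as the paper's own argument: read off the four ``easy'' relations directly from the binomial generators of $S_q^2\mathfrak{u}_+$, then solve the two-by-three linear system coming from the remaining weight-zero generators to extract $v_{-1}\wedge v_1=-v_1\wedge v_{-1}$ and $v_0\wedge v_0=-q^{-1}(q-q^{-1})v_1\wedge v_{-1}$. You spell out the arithmetic more explicitly (identifying and cancelling the factor $q^4+q^2+1$, noting genericity of $q$), whereas the paper simply writes the two weight-zero relations and asserts that ``appropriate linear combinations'' yield the result; but the content is identical.
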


\begin{proof}
By definition, the algebra $\Lambda_{q}(\mathfrak{u}_{+})$ is the
quotient of $T(\mathfrak{u}_{+})$ by the ideal generated by the subspace
of eigenvectors of $\hat{R}$ with positive eigenvalues, that is $S_{q}^{2}\mathfrak{u}_{+}$.
The relations follow straightforwardly from the description of $S_{q}^{2}\mathfrak{u}_{+}$,
except for two of them which we describe below. Taking the quotient gives the relations
\[
v_{1}\wedge v_{-1}+q^{2}v_{-1}\wedge v_{1}-q^{2}v_{0}\wedge v_{0}=0,\quad v_{1}\wedge v_{-1}+q^{-4}v_{-1}\wedge v_{1}+q^{-1}(q+q^{-1})v_{0}\wedge v_{0}=0.
\]
Upon taking appropriate linear combinations, we see that these are
equivalent to
\[
v_{1}\wedge v_{-1}=-v_{-1}\wedge v_{1},\quad v_{0}\wedge v_{0}=-q^{-1}(q-q^{-1})v_{1}\wedge v_{-1}.
\qedhere
\]
\end{proof}

\subsection{The algebra $\Lambda_q(\mathfrak{u}_-)$}

In the classical case, the $\mathfrak{l}$-module $\mathfrak{u}_{-}$ can be identified with the dual of $\mathfrak{u}_{+}$ with respect to the invariant Killing form.
In the special case of $\mathfrak{u}_{+}$ being the adjoint module of $\mathfrak{sl}(2)$, we also have the identification $\mathfrak{u}_+ \cong\mathfrak{u}_-$.
This also holds for the corresponding $U_q(\mathfrak{sl}(2))$-modules.
We will derive an explicit formula below.

We denote by $\{w_i\}_i$ the basis of $\mathfrak{u}_-$ dual to the basis $\{v_i\}_i$ of $\mathfrak{u}_+$, that is $\langle w_i, v_j \rangle = \delta_{i j}$.
Here the dual pairing $\langle \cdot, \cdot \rangle : \mathfrak{u}_- \to \mathfrak{u}_+$ is assumed to be invariant under to the action of $U_q(\mathfrak{l})$.
This means that for any $w \in \mathfrak{u}_-$, $v \in \mathfrak{u}_+$ and $X \in U_q(\mathfrak{l})$ we should have
\[
\langle X_{(1)} w, X_{(2)} v \rangle = \varepsilon(X) \langle w, v \rangle.
\]
In this setting the dual pairing is unique up to a scalar factor.

\begin{lemma}
\label{lem:iso-modules}
We have an isomorphism $\psi:\mathfrak{u}_+ \to \mathfrak{u}_-$ of $U_q(\mathfrak{sl}(2))$-modules given by
\[
\psi(v_1) = w_{-1}, \quad
\psi(v_0) = -w_{0}, \quad
\psi(v_{-1}) = q^2 w_1.
\]
\end{lemma}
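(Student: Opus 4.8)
The plan is to first determine the $U_q(\mathfrak{sl}(2))$-module structure that the invariant pairing forces on $\mathfrak{u}_-$, and then to verify directly that $\psi$ intertwines the two actions. Since $\mathfrak{u}_+$ is the $3$-dimensional simple module and the adjoint module is self-dual, Schur's lemma already guarantees the existence of an isomorphism $\mathfrak{u}_+ \to \mathfrak{u}_-$, unique up to a scalar; the content of the statement is thus the explicit normalization, and to pin it down we do need to make the action on $\mathfrak{u}_-$ explicit.

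First I would unwind the invariance condition $\langle X_{(1)} w, X_{(2)} v \rangle = \varepsilon(X) \langle w, v \rangle$ on the generators. Using $\Delta(K) = K \otimes K$, $\Delta(E) = E \otimes 1 + K \otimes E$ and $\Delta(F) = F \otimes K^{-1} + 1 \otimes F$ one reads off that the action on $\mathfrak{u}_-$ is the left dual action, $\langle X w, v \rangle = \langle w, S(X) v \rangle$; the appearance of $S$ rather than $S^{-1}$ is dictated by the order $\mathfrak{u}_- \otimes \mathfrak{u}_+$ in the pairing. Substituting $S(K) = K^{-1}$, $S(E) = -K^{-1} E$, $S(F) = -F K$, together with the formulae of \autoref{not:adjoint-action}, and extracting the coefficients against $\{ w_i \}$ (using $\langle w_i, v_j \rangle = \delta_{ij}$), this yields
\[
\begin{gathered}
K w_1 = q^{-2} w_1, \quad K w_0 = w_0, \quad K w_{-1} = q^2 w_{-1}, \\
E w_1 = - q^{-2} [2]^{1/2} w_0, \quad E w_0 = - [2]^{1/2} w_{-1}, \quad E w_{-1} = 0, \\
F w_1 = 0, \quad F w_0 = - q^2 [2]^{1/2} w_1, \quad F w_{-1} = - [2]^{1/2} w_0.
\end{gathered}
\]

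With these in hand the rest is a mechanical check on basis vectors: $\psi$ preserves weights, hence commutes with $K$, and one verifies $\psi(E v_j) = E \psi(v_j)$ and $\psi(F v_j) = F \psi(v_j)$ for $j \in \{1, 0, -1\}$ by substitution --- for example $E \psi(v_{-1}) = q^2 E w_1 = - [2]^{1/2} w_0 = \psi([2]^{1/2} v_0) = \psi(E v_{-1})$. As $\psi$ carries each basis vector to a nonzero multiple of a basis vector it is invertible, so it is an isomorphism of $U_q(\mathfrak{sl}(2))$-modules. There is no real obstacle here; the only place to be careful is fixing the duality convention ($S$ versus $S^{-1}$) and tracking the powers of $q$ produced by the antipode. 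A slightly slicker variant would avoid writing out the dual action altogether: $\psi$ is an isomorphism exactly when $B(v, v') := \langle \psi(v), v' \rangle$ is a $U_q(\mathfrak{l})$-invariant bilinear form on $\mathfrak{u}_+$, and such a form is unique up to scale by Schur, so one need only check invariance of the specific $B$ with $B(v_1, v_{-1}) = 1$, $B(v_0, v_0) = -1$, $B(v_{-1}, v_1) = q^2$ --- which takes about the same effort.
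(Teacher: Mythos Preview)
Your proof is correct and follows essentially the same route as the paper: both arguments compute the dual action of $U_q(\mathfrak{sl}(2))$ on the basis $\{w_i\}$ from the invariance of the pairing (the paper writes the $F$-condition as $\langle Fw, K^{-1}v\rangle = -\langle w, Fv\rangle$, which is just your $\langle Fw, v\rangle = \langle w, S(F)v\rangle$ in unravelled form) and then check intertwining. The only cosmetic difference is that the paper sets $\psi(v_1)=\alpha w_{-1}$, $\psi(v_0)=\beta w_0$, $\psi(v_{-1})=\gamma w_1$ and \emph{solves} the equations $F\psi=\psi F$ for $\alpha,\beta,\gamma$, whereas you verify the given map directly; your invariant-bilinear-form reformulation via $B$ is a nice aside not in the paper.
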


\begin{proof}
Clearly we have an isomorphism of vector spaces, so we only have to
check equivariance. First of all, the condition $\langle K w, K v \rangle = \langle w, v \rangle$ gives
\[
K w_{-1} = q^2 w_{-1}, \quad
K w_0 = w_0, \quad
K w_1 = q^{-2} w_1.
\]
It is clear then that an equivariant map $\psi: \mathfrak{u}_+ \to \mathfrak{u}_-$ should have the form
\[
\psi(v_1) = \alpha w_{-1}, \quad
\psi(v_0) = \beta w_0, \quad
\psi(v_{-1}) = \gamma w_1.
\]

Now we want to determine the action of $F$ on the dual basis.
Invariance of the dual pairing gives the condition $\langle F w, K^{-1} v\rangle = - \langle w, F v \rangle$ for any $w \in \mathfrak{u}_-$ and $v \in \mathfrak{u}_+$.
We will use the conventions of \autoref{not:adjoint-action}.
For $w = w_0$ and $v = v_1$ we get $\langle F w_{0}, q^{-2} v_1 \rangle = - \langle w_0, [2]^{1/2} v_{0}\rangle$. This implies $F w_0 = - [2]^{1/2} q^2 w_1$. Similarly, for $w = w_{-1}$ and $v = v_0$ we get $\langle Fw_{-1}, v_0 \rangle = - \langle w_{-1}, [2]^{1/2} v_{-1}\rangle$,
which implies $F w_{-1} = - [2]^{1/2} w_{0}$. Finally it is clear that
$F w_1 = 0$. Summarizing, the action of $F$ on the basis $\{w_i\}_i$ is given by
\[
F w_{-1} = - [2]^{1/2} w_0, \quad
F w_0 = - [2]^{1/2} q^2 w_1, \quad
F w_1 = 0.
\]
Similarly one can obtain the action of $E$.
Using these formulae we compute
\[
F \psi(v_1) = - [2]^{1/2} \alpha w_0, \quad
F \psi(v_0) = - [2]^{1/2} q^2 \beta w_1, \quad
F \psi(v_{-1}) = 0.
\]
On the other hand we have
\[
\psi(F v_1) = [2]^{1/2} \beta w_0, \quad
\psi(F v_0) = [2]^{1/2} \gamma w_1, \quad
\psi(F v_{-1}) = 0.
\]
Enforcing equivariance of $\psi$, namely the condition $F \psi = \psi F$, we find the relations $\alpha = -\beta = q^{-2} \gamma$.
One can show that the action of $E$ gives the same conditions.
Finally we can fix the value $\alpha = 1$ to obtain the expression in the claim.
\end{proof}

\begin{remark}
This isomorphism can be extended to an isomorphism of tensor powers of these modules. For example we obtain $\psi: \mathfrak{u}_+ \otimes \mathfrak{u}_+ \to \mathfrak{u}_- \otimes \mathfrak{u}_-$ by setting $\psi(w \otimes w^\prime) = \psi(w) \otimes \psi(w^\prime)$.
The equivariance of this map follows from that of $\psi:\mathfrak{u}_+ \to \mathfrak{u}_-$.
\end{remark}

It is now immediate to obtain the relations for $\Lambda_{q}(\mathfrak{u}_{-})$.

\begin{corollary}
\label{cor:relations-UM}
The algebra $\Lambda_{q}(\mathfrak{u}_{-})$ has the relations
\[
\begin{gathered}w_{-1}\wedge w_{-1}=0,\quad w_{0}\wedge w_{-1}=-q^{2}w_{-1}\wedge w_{0},\quad w_{0}\wedge w_{0}=-q(q-q^{-1})w_{-1}\wedge w_{1},\\
w_{1}\wedge w_{-1}=-w_{-1}\wedge w_{1},\quad w_{1}\wedge w_{0}=-q^{2}w_{0}\wedge w_{1},\quad w_{1}\wedge w_{1}=0.
\end{gathered}
\]
\end{corollary}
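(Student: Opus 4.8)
The plan is to transport the relations of $\Lambda_q(\mathfrak{u}_+)$ obtained in \autoref{prop:relations-UP} through the $U_q(\mathfrak{sl}(2))$-module isomorphism $\psi : \mathfrak{u}_+ \to \mathfrak{u}_-$ constructed in \autoref{lem:iso-modules}. The first step is to observe that, since the braiding $\hat{R}_V$ of a module $V$ is natural in $V$, any module isomorphism intertwines braidings; concretely, the extension $\psi \otimes \psi : \mathfrak{u}_+ \otimes \mathfrak{u}_+ \to \mathfrak{u}_- \otimes \mathfrak{u}_-$ (equivariant by the remark following \autoref{lem:iso-modules}) satisfies $(\psi \otimes \psi) \hat{R}_{\mathfrak{u}_+} = \hat{R}_{\mathfrak{u}_-} (\psi \otimes \psi)$. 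Consequently $\psi \otimes \psi$ maps the positive-eigenvalue subspace $S_q^2 \mathfrak{u}_+$ of $\hat{R}_{\mathfrak{u}_+}$ isomorphically onto $S_q^2 \mathfrak{u}_-$, and since $\Lambda_q(\mathfrak{u}_\pm)$ is by definition $T(\mathfrak{u}_\pm)$ modulo the two-sided ideal generated by $S_q^2 \mathfrak{u}_\pm$, the map $\psi$ descends to an algebra isomorphism $\Lambda_q(\mathfrak{u}_+) \to \Lambda_q(\mathfrak{u}_-)$.

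It then suffices to apply this algebra isomorphism to each of the six relations in \autoref{prop:relations-UP}, using $\psi(v_1) = w_{-1}$, $\psi(v_0) = -w_0$, $\psi(v_{-1}) = q^2 w_1$, and to simplify the resulting scalars. For example, $v_{-1} \wedge v_{-1} = 0$ gives $w_1 \wedge w_1 = 0$; the relation $v_0 \wedge v_1 = -q^2 v_1 \wedge v_0$ gives $(-w_0) \wedge w_{-1} = -q^2 w_{-1} \wedge (-w_0)$, i.e.\ $w_0 \wedge w_{-1} = -q^2 w_{-1} \wedge w_0$; and $v_0 \wedge v_0 = -q^{-1}(q - q^{-1}) v_1 \wedge v_{-1}$ gives $w_0 \wedge w_0 = -q^{-1}(q - q^{-1}) q^2 \, w_{-1} \wedge w_1 = -q(q - q^{-1}) w_{-1} \wedge w_1$. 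The remaining three relations are handled in exactly the same way.

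There is no genuine obstacle here: the argument is routine once the intertwining property of $\psi$ with the braidings is in place. The only point demanding a little care is the bookkeeping of the scalar factors coming from $\psi(v_0) = -w_0$ and $\psi(v_{-1}) = q^2 w_1$ when both occur in a single relation, as in the $w_0 \wedge w_0$ identity above. An alternative, more computational route would be to recompute $\hat{R}_{\mathfrak{u}_-}$ and its eigenvectors directly from the $U_q(\mathfrak{sl}(2))$-action on $\{w_i\}_i$ found in the proof of \autoref{lem:iso-modules}, but this reproduces the same relations with considerably more effort.
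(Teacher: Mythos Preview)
Your proof is correct and follows the same approach as the paper, which also derives the relations by applying the isomorphism $\psi$ of \autoref{lem:iso-modules} to the relations of $\Lambda_q(\mathfrak{u}_+)$. Your version is in fact more detailed, since you make explicit why $\psi$ descends to an algebra isomorphism $\Lambda_q(\mathfrak{u}_+) \to \Lambda_q(\mathfrak{u}_-)$ via the naturality of the braiding, a point the paper leaves implicit.
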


\begin{proof}
Follows from the relations of $\Lambda_{q}(\mathfrak{u}_{+})$ and the isomorphism $\psi$.
\end{proof}

\subsection{Computation of pairings}

We are now in the position to compute the pairing between $\Lambda_{q}(\mathfrak{u}_-)$ and $\Lambda_{q}(\mathfrak{u}_+)$.
Let us briefly recall how this is defined.
First of all there is a natural pairing of the tensor algebras $T(\mathfrak{u}_-)$ and $T(\mathfrak{u}_+)$, which extends the dual pairing of $\mathfrak{u}_-$ and $\mathfrak{u}_+$.
By definition the exterior algebra $\Lambda_q(V)$ is a quotient of the tensor algebra $T(V)$.
It is shown in \cite[Proposition 3.2]{ChTS14} that the map $\pi : \Lambda^k_q V  \to \Lambda^k_q (V)$, obtained by composing the inclusion $\Lambda_q^k V \hookrightarrow V^{\otimes k}$ with the quotient $V^{\otimes k} \twoheadrightarrow \Lambda_q^k (V)$, is an equivariant isomorphism.
Hence we can define a pairing of exterior algebras by
\[
\langle w, v \rangle_{\Lambda} = \langle \pi_{-}^{-1}(w), \pi_{+}^{-1}(v) \rangle_{T}, \quad
w \in \Lambda^k_q (\mathfrak{u}_-), \ v \in \Lambda^k_q (\mathfrak{u}_+).
\]
Here the subscripts $\Lambda$ and $T$ refer to the exterior and the tensor algebra, respectively.

To compute the pairing we need explicit expressions for the elements $\pi_{-}^{-1}(w)$ and $\pi_{+}^{-1}(v)$.
In degrees $0$ and $3$ we have $1$-dimensional vector spaces, hence the pairing is just a non-zero number that we are always free to rescale. In degree $1$ the pairing is simply the dual pairing. Therefore we only have to compute the pairing in degree $2$.

\begin{notation}
\label{not:bases-degree2}
We define a basis $\{ V_1, \ V_0, \ V_{-1} \}$ of $\Lambda^2_q \mathfrak{u}_+$ by
\begin{gather*}
V_1 = v_1 \otimes v_0 - q^2 v_0 \otimes v_1, \quad
V_{-1} = v_0 \otimes v_{-1} - q^2 v_{-1} \otimes v_0, \\
V_0 = v_1 \otimes v_{-1} - v_{-1} \otimes v_1 - q (q - q^{-1}) v_0 \otimes v_0.
\end{gather*}
Similarly we define a basis $\{ W_{-1}, \ W_0, \ W_1 \}$ of $\Lambda^2_q \mathfrak{u}_-$ by
\begin{gather*}
W_{-1} = w_{-1} \otimes w_0 - q^2 w_0 \otimes w_{-1}, \quad
W_1 = w_0 \otimes w_1 - q^2 w_1 \otimes w_0, \\
W_0 = w_{-1} \otimes w_1 - w_1 \otimes w_{-1} - q^{-1} (q - q^{-1}) w_0 \otimes w_0.
\end{gather*}
\end{notation}

Notice that the vectors $W_{-1}$, $W_0$ and $W_1$ correspond, up to scalars, to the image of the vectors $V_1$, $V_0$ and $V_{-1}$ under the isomorphism $\psi$ from \autoref{lem:iso-modules}.

\begin{lemma}
\label{lem:lift-tensor}
We have the identities
\begin{gather*}
v_1 \wedge v_0 = \frac{q^{-2}}{[2]_{q^2}} \pi_+(V_1), \quad
v_1 \wedge v_{-1} = \frac{1}{[2]_{q^2}} \pi_+(V_0), \quad
v_0 \wedge v_{-1} = \frac{q^{-2}}{[2]_{q^2}} \pi_+(V_{-1}), \\
w_{-1} \wedge w_0 = \frac{q^{-2}}{[2]_{q^2}} \pi_-(W_{-1}), \quad
w_{-1} \wedge w_1 = \frac{1}{[2]_{q^2}} \pi_-(W_0), \quad
w_0 \wedge w_1 = \frac{q^{-2}}{[2]_{q^2}} \pi_-(W_1).
\end{gather*}
Here we use the notation $[2]_{q^2} = q^2 + q^{-2}$.
\end{lemma}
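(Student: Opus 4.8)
The plan is to compute $\pi_+(V_k)$ and $\pi_-(W_k)$ directly from the definitions. Recall that $\pi_+$ is induced by the quotient map $\mathfrak{u}_+^{\otimes 2} \twoheadrightarrow \Lambda_q(\mathfrak{u}_+)$, so applied to the explicit tensors of \autoref{not:bases-degree2} it simply turns each $\otimes$ into $\wedge$. I would then, for each of the three basis vectors, use the quadratic relations of \autoref{prop:relations-UP} to collapse the resulting element of $\Lambda^2_q(\mathfrak{u}_+)$ to a scalar multiple of a single monomial, and read off the scalar; inverting it gives the stated identity. The $\mathfrak{u}_-$ identities then follow the same pattern using \autoref{cor:relations-UM}.

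Concretely, one finds $\pi_+(V_1) = v_1 \wedge v_0 - q^2 v_0 \wedge v_1$, and substituting $v_0 \wedge v_1 = -q^2 v_1 \wedge v_0$ gives $(1 + q^4)\, v_1 \wedge v_0 = q^2 [2]_{q^2}\, v_1 \wedge v_0$; the case of $V_{-1}$ is word-for-word the same with $(v_0, v_{-1})$ in place of $(v_1, v_0)$. The only place a small remark is needed is $V_0$: after applying $\pi_+$ and then the relations $v_{-1} \wedge v_1 = -v_1 \wedge v_{-1}$ and $v_0 \wedge v_0 = -q^{-1}(q - q^{-1})\, v_1 \wedge v_{-1}$, what remains is $\bigl(2 + (q - q^{-1})^2\bigr) v_1 \wedge v_{-1}$, and one invokes the elementary identity $2 + (q - q^{-1})^2 = q^2 + q^{-2} = [2]_{q^2}$ to conclude.

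On the $\mathfrak{u}_-$ side one repeats these three computations verbatim using the relations of \autoref{cor:relations-UM}: $w_0 \wedge w_{-1} = -q^2 w_{-1} \wedge w_0$ and $w_1 \wedge w_0 = -q^2 w_0 \wedge w_1$ produce the factor $q^2 [2]_{q^2}$ for $W_{-1}$ and $W_1$, whereas $w_1 \wedge w_{-1} = -w_{-1} \wedge w_1$ together with $w_0 \wedge w_0 = -q(q - q^{-1})\, w_{-1} \wedge w_1$ again yields $2 + (q - q^{-1})^2 = [2]_{q^2}$ for $W_0$. I do not expect any genuine obstacle: the whole statement is bookkeeping with the quadratic relations already derived, and the only thing to watch is keeping the signs and powers of $q$ of \autoref{not:bases-degree2} straight — in particular that $\{V_1, V_0, V_{-1}\}$ corresponds under $\psi$ to $\{W_{-1}, W_0, W_1\}$, which is consistent on weights and is exactly what makes the two halves of the lemma mirror each other.
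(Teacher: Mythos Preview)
Your proposal is correct and follows essentially the same approach as the paper: apply $\pi_\pm$ by replacing $\otimes$ with $\wedge$, then simplify using the quadratic relations of \autoref{prop:relations-UP} and \autoref{cor:relations-UM}. The paper even singles out the same illustrative case $V_0$ and the same identity $2 + (q - q^{-1})^2 = q^2 + q^{-2}$; your write-up is in fact slightly more detailed, spelling out the $V_1$, $V_{-1}$ and $W_k$ computations that the paper leaves implicit.
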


\begin{proof}
This follows from the definition of the maps $\pi_{\pm}$, together with simple computations involving the commutation relations obtained in \autoref{prop:relations-UP} and \autoref{cor:relations-UM}. As an example, we show the identity for $v_1 \wedge v_{-1}$. We compute
\[
\begin{split}
\pi_+(V_0)
 & = v_1 \wedge v_{-1} - v_{-1} \wedge v_1 - q (q - q^{-1}) v_0 \wedge v_0 \\
 & = 2 v_1 \wedge v_{-1} + (q - q^{-1})^2 v_1 \wedge v_{-1} = (q^2 + q^{-2}) v_1 \wedge v_{-1}.
\end{split}
\]
The other relations are proven similarly.
\end{proof}

We are now in the position to compute the pairing between elements of degree $2$.

\begin{proposition}
\label{prop:pairings}
The non-zero pairings between the basis $\{w_0 \wedge w_1, \ w_{-1} \wedge w_1, \ w_{-1} \wedge w_0\}$ of $\Lambda^2_q(\mathfrak{u}_-)$ and the basis $\{v_1 \wedge v_0, \ v_1 \wedge v_{-1}, \ v_0 \wedge v_{-1}\}$ of $\Lambda^2_q(\mathfrak{u}_+)$ are given by
\[
\langle w_0 \wedge w_1, v_1 \wedge v_0 \rangle = \langle w_{-1} \wedge w_0, v_0 \wedge v_{-1} \rangle = \frac{q^{-2}}{[2]_{q^2}}, \quad
\langle w_{-1} \wedge w_1, v_1 \wedge v_{-1} \rangle = \frac{1}{[2]_{q^2}},
\]
where we use the notation $[2]_{q^2} = q^2 + q^{-2}$.\end{proposition}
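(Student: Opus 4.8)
The plan is to reduce the computation to the natural pairing on the tensor algebras by means of \autoref{lem:lift-tensor}. Recall that $\langle \cdot, \cdot \rangle_{\Lambda}$ is by definition the pullback of $\langle \cdot, \cdot \rangle_{T}$ along $\pi_{-}^{-1} \otimes \pi_{+}^{-1}$, and that in degree $2$ the tensor pairing is the natural $U_q(\mathfrak{l})$-equivariant one, which on the weight bases reads $\langle w_a \otimes w_b, v_c \otimes v_d \rangle_{T} = \langle w_a, v_d \rangle \langle w_b, v_c \rangle = \delta_{ad}\, \delta_{bc}$; the reversal of the tensor factors here reflects the canonical identification $(V \otimes V)^{*} \cong V^{*} \otimes V^{*}$. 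So the argument splits into deciding which pairings can be non-zero and evaluating those that survive.

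First I would dispose of the vanishing pairings by a weight argument. Since the pairing is $K$-invariant, $\langle \cdot, \cdot \rangle_{\Lambda}$ vanishes on any pair of weight vectors whose $K$-eigenvalues are not mutually inverse. From \autoref{not:bases-degree2}, together with the $K$-action on the $w_i$ computed in the proof of \autoref{lem:iso-modules}, one reads off that $V_1, V_0, V_{-1}$ have $K$-eigenvalues $q^{2}, 1, q^{-2}$ and that $W_{-1}, W_0, W_1$ have $K$-eigenvalues $q^{2}, 1, q^{-2}$. Hence among the nine pairings $\langle \pi_-(W_\bullet), \pi_+(V_\bullet) \rangle_{\Lambda}$ only $\langle \pi_-(W_{-1}), \pi_+(V_{-1}) \rangle_{\Lambda}$, $\langle \pi_-(W_0), \pi_+(V_0) \rangle_{\Lambda}$ and $\langle \pi_-(W_1), \pi_+(V_1) \rangle_{\Lambda}$ can be non-zero. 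By \autoref{lem:lift-tensor} these correspond, up to the explicit scalars recorded there, to $\langle w_{-1} \wedge w_0, v_0 \wedge v_{-1} \rangle$, $\langle w_{-1} \wedge w_1, v_1 \wedge v_{-1} \rangle$ and $\langle w_0 \wedge w_1, v_1 \wedge v_0 \rangle$, so all remaining pairings between the two bases in the statement vanish.

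It then remains to evaluate the three surviving pairings. Using \autoref{lem:lift-tensor} one has, for instance, $v_1 \wedge v_0 = \tfrac{q^{-2}}{[2]_{q^2}} \pi_+(V_1)$ and $w_0 \wedge w_1 = \tfrac{q^{-2}}{[2]_{q^2}} \pi_-(W_1)$, so that $\langle w_0 \wedge w_1, v_1 \wedge v_0 \rangle = \tfrac{q^{-4}}{[2]_{q^2}^{2}} \langle W_1, V_1 \rangle_{T}$, and similarly in the other two cases. The tensor pairings $\langle W_1, V_1 \rangle_{T}$, $\langle W_0, V_0 \rangle_{T}$ and $\langle W_{-1}, V_{-1} \rangle_{T}$ are computed by expanding the definitions in \autoref{not:bases-degree2} and applying the degree-$2$ pairing term by term; for example $\langle W_0, V_0 \rangle_{T} = 1 + 1 + (q - q^{-1})^{2} = [2]_{q^2}$, while $\langle W_1, V_1 \rangle_{T} = \langle W_{-1}, V_{-1} \rangle_{T} = 1 + q^{4}$. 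Substituting these values together with the scalars of \autoref{lem:lift-tensor} gives $\langle w_0 \wedge w_1, v_1 \wedge v_0 \rangle = \langle w_{-1} \wedge w_0, v_0 \wedge v_{-1} \rangle = \tfrac{q^{-4}(1 + q^{4})}{[2]_{q^2}^{2}} = \tfrac{q^{-2}}{[2]_{q^2}}$ and $\langle w_{-1} \wedge w_1, v_1 \wedge v_{-1} \rangle = \tfrac{[2]_{q^2}}{[2]_{q^2}^{2}} = \tfrac{1}{[2]_{q^2}}$, as claimed. The computation is routine; the one point that needs care is fixing the correct convention for the natural pairing on the tensor algebras — in particular the reversal of factors in degree $2$ — since the wrong choice would break $U_q(\mathfrak{l})$-equivariance (equivalently, compatibility with the equivariant maps $\pi_\pm$) and alter the final constants.
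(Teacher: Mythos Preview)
Your proof is correct and follows essentially the same route as the paper: reduce to the tensor-algebra pairing via \autoref{lem:lift-tensor}, then compute directly using the convention $\langle w' \otimes w, v \otimes v' \rangle_T = \langle w, v \rangle \langle w', v' \rangle$. Your explicit weight argument for the vanishing of the off-diagonal pairings is a welcome addition; the paper handles these implicitly by saying the remaining cases are treated similarly.
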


\begin{proof}
The pairing of exterior algebras is defined by $\langle w, v\rangle_{\Lambda} = \langle \pi_-^{-1}(w), \pi_+^{-1}(v) \rangle_{T}$, where the pairing of tensor algebras is given by $\langle w^\prime \otimes w, v \otimes v^\prime \rangle_T = \langle w, v \rangle \langle w^\prime, v^\prime \rangle$.
Then the result follows from explicit computations using \autoref{lem:lift-tensor}. Let us see one example. We have
\[
\begin{split}
\langle w_0 \wedge w_1, v_1 \wedge v_0 \rangle
 & = \frac{q^{-4}}{[2]^2_{q^2}} \langle w_0 \otimes w_{1} - q^2 w_1 \otimes w_0, v_1 \otimes v_0 - q^2 v_0 \otimes v_1 \rangle \\
 & = \frac{q^{-4}}{[2]^2_{q^2}} (\langle w_0 \otimes w_1, v_1 \otimes v_0 \rangle + q^4 \langle w_1 \otimes w_0, v_0 \otimes v_1 \rangle) \\
 & = \frac{q^{-4}}{[2]^2_{q^2}} (1 + q^4) = \frac{q^{-2}}{[2]_{q^2}}.
\end{split}
\]
The other cases are treated similarly.
\end{proof}

As mentioned previously, we are free to rescale the pairing in each degree.

\section{The quantum Clifford algebra}

\label{sec:quantum-clifford}

In this section we will derive explicit expression for the operators $\gamma_-(w)$ on $\Lambda_{q}(\mathfrak{u}_{+})$.
These operators, together with $\gamma_+(v)$, generate the quantum Clifford algebra.
For our discussion of Dolbeault-Dirac operators we are actually interested in the adjoints $\gamma_-(w)^*$.
To define these we will classify all invariant Hermitian inner products on the exterior algebra $\Lambda_{q}(\mathfrak{u}_{+})$.

\subsection{Bases of exterior algebras}

Below we summarize our conventions for the bases of the exterior algebras $\Lambda_{q}(\mathfrak{u}_{+})$ and $\Lambda_{q}(\mathfrak{u}_{-})$.

\begin{notation}
\label{def:ord-basis}

We fix an ordered basis $\{v_i^{(k)}\}_i$ of $\Lambda_q (\mathfrak{u}_+)$ in each degree $k$. We choose
\[
\{1\},\quad
\{v_1, v_0, v_{-1}\}, \quad
\{v_1 \wedge v_0, \quad v_1 \wedge v_{-1}, \quad v_0 \wedge v_{-1}\},\quad
\{v_1 \wedge v_0 \wedge v_{-1}\}.
\]
Similarly we fix an ordered basis $\{w_i^{(k)}\}_i$ of $\Lambda_q (\mathfrak{u}_-)$
in each degree $k$. We choose
\[
\{1\}, \quad
\{w_{1},w_{0},w_{-1}\}, \quad
\{w_{0}\wedge w_{1},\quad w_{-1}\wedge w_{1},\quad w_{-1}\wedge w_{0}\}, \quad
\{w_{-1}\wedge w_{0}\wedge w_{1}\}.
\]
\end{notation}

Let us also summarize the results for the pairing $\langle \cdot, \cdot\rangle: \Lambda_q (\mathfrak{u}_-) \otimes \Lambda_q (\mathfrak{u}_+) \to \mathbb{C}$.
Elements of different degrees are orthogonal. It follows from \autoref{prop:pairings} that we have
\begin{gather*}
\langle1,1\rangle=1,\quad
\langle w_{i},v_{j}\rangle=\delta_{ij}, \\
\langle w_0 \wedge w_1, v_1 \wedge v_0 \rangle = \langle w_{-1} \wedge w_0, v_0 \wedge v_{-1} \rangle = q^{-2}, \quad
\langle w_{-1}\wedge w_{1},v_{1}\wedge v_{-1}\rangle=1, \\
\langle w_{-1}\wedge w_{0}\wedge w_{1},v_{1}\wedge v_{0}\wedge v_{-1}\rangle=1.
\end{gather*}
Notice that we have rescaled the pairings appearing in \autoref{prop:pairings}.
Also notice the following orthogonality property: we have $\langle w_{i}^{(k)},v_{j}^{(k)}\rangle=\delta_{ij}b_{i}$
for some numbers $b_{i}$.

\subsection{Action of $\mathfrak{u}_-$}

Let us recall the definition of the action $\gamma_{-}$ of the module $\mathfrak{u}_{-}$ on  the exterior algebra $\Lambda_{q}(\mathfrak{u}_{+})$. It is defined as the dual of the right multiplication on $\mathfrak{u}_{-}$, that is
\[
\langle w, \gamma_{-}(z) v\rangle_{k} = \langle w \wedge z, v\rangle_{k + 1}, \quad
z \in \mathfrak{u}_{-}, \ w \in \Lambda_{q}^{k }(\mathfrak{u}_{-}), \ v \in \Lambda_{q}^{k + 1}(\mathfrak{u}_{+}).
\]
Below we will compute the explicit action of $\mathfrak{u}_{-}$ on
the ordered basis of $\Lambda_{q}(\mathfrak{u}_{+})$.

\begin{proposition}
\label{prop:action-gammaM}
The action of $\mathfrak{u}_{-}$ on the basis of $\Lambda_{q}(\mathfrak{u}_{+})$ is as follows. In degree $0$ we have $\gamma_{-}(w_{i}) 1 = 0$, in degree $1$ we have  $\gamma_{-}(w_{i}) v_{j} = \delta_{ij} 1$, in degree $2$ we have
\[
\begin{gathered}
\gamma_{-}(w_{1})v_{1}\wedge v_{0}=q^{-2}v_{0},\quad\gamma_{-}(w_{1})v_{1}\wedge v_{-1}=v_{-1},\quad\gamma_{-}(w_{1})v_{0}\wedge v_{-1}=0,\\
\gamma_{-}(w_{0})v_{1}\wedge v_{0}=-v_{1},\quad\gamma_{-}(w_{0})v_{1}\wedge v_{-1}=-q(q-q^{-1})v_{0},\quad\gamma_{-}(w_{0})v_{0}\wedge v_{-1}=q^{-2}v_{-1},\\
\gamma_{-}(w_{-1})v_{1}\wedge v_{0}=0,\quad\gamma_{-}(w_{-1})v_{1}\wedge v_{-1}=-v_{1},\quad\gamma_{-}(w_{-1})v_{0}\wedge v_{-1}=-v_{0},\\
\end{gathered}
\]
and finally in degree $3$ we have
\[
\begin{gathered}
\gamma_{-}(w_{1}) v_{1} \wedge v_{0} \wedge v_{-1} = q^{2} v_{0} \wedge v_{-1}, \quad \gamma_{-}(w_{1}) v_{1} \wedge v_{0} \wedge v_{-1} = -q^{2} v_{1} \wedge v_{-1}, \\
\gamma_{-}(w_{1}) v_{1} \wedge v_{0} \wedge v_{-1} = q^{4} v_{1} \wedge v_{0}.
\end{gathered}
\]
\end{proposition}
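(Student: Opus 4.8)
The plan is to compute the operators $\gamma_-(w_i)$ directly from the defining adjunction $\langle w, \gamma_-(z)v\rangle_k = \langle w\wedge z, v\rangle_{k+1}$, proceeding degree by degree and using the pairing values recorded just after \autoref{def:ord-basis} (the rescaled form of \autoref{prop:pairings}) together with the commutation relations of $\Lambda_q(\mathfrak{u}_-)$ from \autoref{cor:relations-UM}. Degrees $0$ and $1$ are immediate: $\gamma_-(w_i)$ maps degree $0$ into the zero space, so $\gamma_-(w_i)1 = 0$; and in degree $1$, writing $\gamma_-(w_i)v_j = c\cdot 1$ and pairing against $1$ gives $c = \langle 1\wedge w_i, v_j\rangle = \langle w_i, v_j\rangle = \delta_{ij}$, using $\langle 1,1\rangle = 1$.

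For degree $2$ I would expand $\gamma_-(w_i)(v_a\wedge v_b) = \sum_j c_j v_j$ in the degree-$1$ basis. Since $\langle w_j, v_k\rangle = \delta_{jk}$, pairing against $w_j$ isolates $c_j = \langle w_j\wedge w_i, v_a\wedge v_b\rangle$. One then rewrites the product $w_j\wedge w_i$ in $\Lambda^2_q(\mathfrak{u}_-)$ in terms of the ordered basis $\{w_0\wedge w_1,\, w_{-1}\wedge w_1,\, w_{-1}\wedge w_0\}$ using \autoref{cor:relations-UM}, and reads off the value from the only nonzero degree-$2$ pairings $\langle w_0\wedge w_1, v_1\wedge v_0\rangle = \langle w_{-1}\wedge w_0, v_0\wedge v_{-1}\rangle = q^{-2}$ and $\langle w_{-1}\wedge w_1, v_1\wedge v_{-1}\rangle = 1$, every other pair of basis vectors being orthogonal. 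A convenient shortcut, which doubles as a consistency check, is that each $\gamma_-(w_i)$ is a weight operator, so $\gamma_-(w_i)(v_a\wedge v_b)$ is automatically a scalar multiple of the unique degree-$1$ basis vector of matching weight (or zero); thus only one scalar per case actually has to be pinned down. The single coefficient that is not of pure ``diagonal'' type, namely $\gamma_-(w_0)(v_1\wedge v_{-1}) = -q(q-q^{-1})v_0$, arises precisely from the quadratic relation $w_0\wedge w_0 = -q(q-q^{-1})\,w_{-1}\wedge w_1$, which gives $\langle w_0\wedge w_0, v_1\wedge v_{-1}\rangle = -q(q-q^{-1})$.

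For degree $3$ the procedure is identical, now expanding $\gamma_-(w_i)(v_1\wedge v_0\wedge v_{-1}) = \sum_j d_j\, v^{(2)}_j$ in the degree-$2$ basis $\{v_1\wedge v_0,\, v_1\wedge v_{-1},\, v_0\wedge v_{-1}\}$. The only additional bookkeeping is the normalization: $\langle w^{(2)}_j, v^{(2)}_j\rangle$ equals $q^{-2}$ for $j\in\{1,3\}$ and $1$ for $j=2$, so pairing against $w^{(2)}_j$ gives $d_j\, b_j = \langle w^{(2)}_j\wedge w_i, v_1\wedge v_0\wedge v_{-1}\rangle$, and one divides by $b_j$ at the end. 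Since $\Lambda^3_q(\mathfrak{u}_-)$ is one-dimensional, spanned by $w_{-1}\wedge w_0\wedge w_1$ with $\langle w_{-1}\wedge w_0\wedge w_1, v_1\wedge v_0\wedge v_{-1}\rangle = 1$, each bracket on the right reduces, via \autoref{cor:relations-UM}, either to a scalar multiple of $w_{-1}\wedge w_0\wedge w_1$ or to $0$ (whenever a factor gets repeated). Carrying this out for $i = 1, 0, -1$ yields the three stated degree-$3$ formulas.

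No step presents a genuine conceptual difficulty; the computations are mechanical. The points to watch are the repeated use of the $q$-commutation relations of $\Lambda_q(\mathfrak{u}_-)$ when reordering products, the division by the degree-$2$ normalization constants $b_j$ in the last case, and the usual risk of sign and power-of-$q$ slips — each resulting formula can be cross-checked against the weight of $\gamma_-(w_i)$ and against the expected drop in degree.
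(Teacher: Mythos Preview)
Your proposal is correct and follows essentially the same approach as the paper: both derive the expansion formula $\gamma_-(w_a)v=\sum_i \langle w_i^{(k-1)}\wedge w_a,\,v\rangle/\langle w_i^{(k-1)},v_i^{(k-1)}\rangle\; v_i^{(k-1)}$ from the defining adjunction and the orthogonality of the chosen bases, then reduce each $w_i^{(k-1)}\wedge w_a$ via the relations of \autoref{cor:relations-UM} and read off the result from the pairing values. Your weight-operator observation is a convenient shortcut but not a different method.
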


\begin{proof}
We denote by $\{v_{i}^{(k)}\}_{i}$ and $\{w_{i}^{(k)}\}_{i}$ the
bases of $\Lambda_{q}^{k}(\mathfrak{u}_{+})$ and $\Lambda_{q}^{k}(\mathfrak{u}_{-})$
as in \autoref{def:ord-basis}. Recall that these are orthogonal in the
sense that $\langle w_{i}^{(k)},v_{j}^{(k)}\rangle=\delta_{ij}b_{i}$
for some numbers $b_{i}$.
From this fact it easily follows that we can write any vector $v \in \Lambda_{q}^{k}(\mathfrak{u}_{+})$ as
\[
v = \sum_{i} \frac{\langle w_{i}^{(k)}, v\rangle}{\langle w_{i}^{(k)}, v_{i}^{(k)}\rangle} v_{i}^{(k)}.
\]
Then, acting with $\gamma_{-}(w_{a}):\Lambda_{q}^{k}(\mathfrak{u}_{+})\to\Lambda_{q}^{k-1}(\mathfrak{u}_{+})$
on a vector $v\in\Lambda_{q}^{k}(\mathfrak{u}_{+})$, we obtain
\[
\gamma_{-}(w_{a})v=\sum_{i}\frac{\langle w_{i}^{(k-1)},\gamma_{-}(w_{a})v\rangle}{\langle w_{i}^{(k-1)},v_{i}^{(k-1)}\rangle}v_{i}^{(k-1)}=\sum_{i}\frac{\langle w_{i}^{(k-1)}\wedge w_{a},v\rangle}{\langle w_{i}^{(k-1)},v_{i}^{(k-1)}\rangle}v_{i}^{(k-1)}.
\]
From this expression we can easily compute the action of $\gamma_{-}(w_{a})$
on the basis $\{v_{i}^{(k)}\}_{i}$.

In degree $0$ it is clear that $\gamma_{-}(w_a) 1 = 0$. In degree
$1$ we find $\gamma_- (w_i) v_j = \delta_{ij} 1$, since $\langle w_i, v_j \rangle = \delta_{ij}$.
The formulae in degrees $2$ and $3$ follow from simple computations.
Let us just show an example. Let $v=v_{1}\wedge v_{0}\wedge v_{-1}\in\Lambda_{q}^{3}(\mathfrak{u}_{+})$.
Then we have
\[
\begin{split}
\gamma_{-}(w_{1})v
& = \frac{\langle w_{0} \wedge w_{1} \wedge w_{1}, v\rangle}{\langle w_{0} \wedge w_{1}, v_{1} \wedge v_{0}\rangle} v_{1} \wedge v_{0} + \frac{\langle w_{-1} \wedge w_{1} \wedge w_{1}, v\rangle}{\langle w_{-1} \wedge w_{1}, v_{1} \wedge v_{-1}\rangle} v_{1} \wedge v_{-1} \\
& + \frac{\langle w_{-1} \wedge w_{0} \wedge w_{1}, v\rangle}{\langle w_{-1} \wedge w_{0}, v_{0} \wedge v_{-1}\rangle} v_{0} \wedge v_{-1}\\
& = \frac{\langle w_{-1} \wedge w_{0} \wedge w_{1}, v_{1} \wedge v_{0} \wedge v_{-1}\rangle}{\langle w_{-1} \wedge w_{0}, v_{0} \wedge v_{-1}\rangle} v_{0} \wedge v_{-1} = q^{2} v_{0} \wedge v_{-1}.
\end{split}
\]
The other cases follow similarly, upon using the appropriate commutation relations. We remark that we have to use relations like $w_{-1} \wedge w_{0} \wedge w_{0} = 0$, even though $w_{0} \wedge w_{0} \neq 0$.
\end{proof}

\subsection{Hermitian inner products and adjoints}

The next task is to determine an appropriate Hermitian inner product on $\Lambda_q(\mathfrak{u}_+)$.
It should be invariant with respect to the adjoint action of $U_q(\mathfrak{l})$ considered as a $*$-algebra, with the $*$-structure coming from the compact real form of $U_q(\mathfrak{g})$.
More concretely this means that
\[
(v, X v^\prime) = (X^* v, v^\prime), \quad v, v^\prime \in \Lambda_q(\mathfrak{u}_+), \ X \in U_q(\mathfrak{l}).
\]
In the next proposition we will determine all the Hermitian inner products on $\Lambda_q(\mathfrak{u}_+)$ that satisfy this condition.
The only freedom we will get is a rescaling in each degree.

\begin{proposition}
Let $(\cdot, \cdot): \Lambda_q(\mathfrak{u}_+) \otimes \Lambda_q(\mathfrak{u}_+) \to \mathbb{C}$ be an invariant inner product as above.
Denote by $M^{(k)}$ the matrix of inner products in degree $k$, that is $M^{(k)}_{i j} = (v^{(k)}_i, v^{(k)}_j)$.
Then, up to a rescaling in each degree, we have
\[
M^{(0)} = (1), \quad
M^{(1)} = \left(\begin{array}{ccc}
1 & 0 & 0\\
0 & 1 & 0\\
0 & 0 & q^2
\end{array}\right), \quad
M^{(2)} = \left(\begin{array}{ccc}
1 & 0 & 0\\
0 & q^4 & 0\\
0 & 0 & q^2
\end{array}\right), \quad
M^{(3)} = (1).
\]
\end{proposition}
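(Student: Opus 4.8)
The plan is to analyse each graded piece $\Lambda_q^k(\mathfrak{u}_+)$ separately. Recall that for $C_2$ with $S = \{\alpha_1\}$ the algebra $U_q(\mathfrak{l})$ is generated by $K_1^{\pm 1}, K_2^{\pm 1}$ together with $E_1, F_1$, and the latter (with $K_1$) span a copy of $U_q(\mathfrak{sl}(2))$ acting on $\Lambda_q(\mathfrak{u}_+)$ through the formulae of \autoref{not:adjoint-action}. In degrees $0$ and $3$ the component is one-dimensional, so an invariant Hermitian inner product is just a positive number; rescaling it to $1$ gives $M^{(0)} = M^{(3)} = (1)$ and nothing else needs to be checked.

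For degrees $1$ and $2$ I would first record the $U_q(\mathfrak{sl}(2))$-module structure. Degree $1$ is the adjoint module by definition. For degree $2$, using $\Delta(E) = E \otimes 1 + K \otimes E$, $\Delta(F) = F \otimes K^{-1} + 1 \otimes F$ and the relations of \autoref{prop:relations-UP} --- in particular $v_0 \wedge v_0 = -q^{-1}(q - q^{-1}) v_1 \wedge v_{-1}$ --- one computes the action of $E$ and $F$ on the ordered basis $v_1 \wedge v_0$, $v_1 \wedge v_{-1}$, $v_0 \wedge v_{-1}$. The outcome is that these three vectors have distinct $K$-eigenvalues $q^2, 1, q^{-2}$ and are carried into one another by $E$ and $F$ with nonzero scalar factors, so $\Lambda_q^2(\mathfrak{u}_+)$ is again the three-dimensional irreducible module.

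Next I would turn the invariance condition $(v, X v') = (X^* v, v')$, with $K^* = K$, $E^* = KF$, $F^* = EK^{-1}$, into a linear system for the entries of $M^{(k)}$. Invariance under $K_1$ (and then automatically under $K_2$, since the matrix is already diagonal) forces $M^{(k)}$ to be diagonal, because the three basis vectors lie in distinct weight spaces. Writing $M^{(k)} = \mathrm{diag}(m_1, m_0, m_{-1})$ and applying the $F$-invariance identity to two consecutive pairs of weight vectors produces two relations among $m_1, m_0, m_{-1}$: in degree $1$ these give $m_0 = m_1$ and $m_{-1} = q^2 m_1$, and in degree $2$ they give $m_0 = q^4 m_1$ and $m_{-1} = q^2 m_1$. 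Together with positivity this shows the solution space is one-dimensional, and normalising $m_1 = 1$ produces exactly the matrices $M^{(1)} = \mathrm{diag}(1,1,q^2)$ and $M^{(2)} = \mathrm{diag}(1,q^4,q^2)$.

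No real obstacle is anticipated. The only slightly delicate point is the degree-$2$ computation of the $E$- and $F$-action, where one must remember to rewrite the occurrences of $v_0 \wedge v_0$ using the exterior-algebra relation rather than setting them to zero; once that is done, the invariance equations reduce to elementary algebra.
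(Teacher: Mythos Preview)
Your proposal is correct and follows essentially the same strategy as the paper: handle degrees $0$ and $3$ trivially, use $K$-invariance to force the Gram matrix to be diagonal, and then extract the ratios of diagonal entries from the $F$-invariance condition $(v, Fv') = (EK^{-1}v, v')$ applied to consecutive weight vectors.

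The only noteworthy variation is in degree~$2$: you compute the $F$-action directly on the wedge basis $v_1\wedge v_0,\ v_1\wedge v_{-1},\ v_0\wedge v_{-1}$ inside $\Lambda_q(\mathfrak{u}_+)$ using the coproduct and the relation $v_0\wedge v_0 = -q^{-1}(q-q^{-1})\,v_1\wedge v_{-1}$, whereas the paper first computes $F$ on the tensor-algebra vectors $V_1,V_0,V_{-1}\in\Lambda_q^2\mathfrak{u}_+$ and then transports the result through the equivariant isomorphism $\pi_+$ of \autoref{lem:lift-tensor} to identify $v_1\wedge v_0\sim v_1$, $v_1\wedge v_{-1}\sim q^2 v_0$, $v_0\wedge v_{-1}\sim v_{-1}$, thereby reducing to the degree-$1$ result. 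Your route is slightly more direct and self-contained; the paper's route has the advantage of recycling computations already done for the braiding and the pairing. Either way the resulting $F$-action is $F(v_1\wedge v_0)=[2]^{1/2}q^{-2}\,v_1\wedge v_{-1}$, $F(v_1\wedge v_{-1})=[2]^{1/2}q^{2}\,v_0\wedge v_{-1}$, and the invariance equations give $m_0=q^4 m_1$, $m_{-1}=q^2 m_1$ as you claim.
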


\begin{proof}
The statement in degrees $0$ and $3$ is obvious, since these are $1$-dimensional vector spaces.
Next we consider the case of degree $1$.
In general the fact that the inner product is invariant under the generator $K$, namely $(v, K v^\prime) = (K v, v^\prime)$, implies that vectors of different weights are orthogonal.
To proceed we will use the explicit action given in \autoref{not:adjoint-action}.
On one hand we have $(v_0, F v_1) = [2]^{1/2} (v_0, v_0)$. On the other hand we have
\[
(F^* v_0, v_1) = (E K^{-1} v_0, v_1) = [2]^{1/2} (v_1, v_1).
\]
Hence we must have $(v_0, v_0) = (v_1, v_1)$. Similarly we have $(v_{-1}, F v_0) = [2]^{1/2}(v_{-1}, v_{-1})$ and
\[
(F^* v_{-1}, v_0) = (E K^{-1} v_{-1}, v_0) = [2]^{1/2} q^2 (v_0, v_0).
\]
Then we conclude that $(v_{-1}, v_{-1}) = q^2 (v_0, v_0)$.
We obtain the result by fixing $(v_1, v_1) = 1$.

Finally we consider the case of degree $2$.
The spaces $\Lambda^1_q \mathfrak{u}_+$ and $\Lambda^2_q \mathfrak{u}_+$ are isomorphic as $U_q(\mathfrak{sl}(2))$-modules, but we have to be careful with this identification.
Recall that we have $\Lambda^2_q \mathfrak{u}_+ = \mathrm{span} \{V_1, V_0, V_{-1}\}$, where the vectors are given explicitely in \autoref{not:bases-degree2}.
The action of the generator $F$ on these elements is given by
\[
F V_1 = [2]^{1/2} V_0, \quad
F V_0 = [2]^{1/2} V_{-1}, \quad
F V_{-1} = 0.
\]
This follows from easy computations. For example we have
\[
\begin{split}
F V_1
 & = F v_1 \otimes K^{-1} v_0 + v_1 \otimes F v_0 - q^2 F v_0 \otimes K^{-1} v_1 - q^2 v_0 \otimes F v_1 \\
 & = [2]^{1/2} (v_0 \otimes v_0 + v_1 \otimes v_{-1} - v_{-1} \otimes v_1 - q^2 v_0 \otimes v_0) = [2]^{1/2} V_0.
\end{split}
\]
Now recall the relations between the elements $V_1, \ V_0, \ V_{-1}$ and the elements $v_1 \wedge v_0, \ v_1 \wedge v_{-1}, \ v_0 \wedge v_{-1}$ given by \autoref{lem:lift-tensor}.
Since the map $\pi_+$ is equivariant, we obtain
\[
F (v_1 \wedge v_0) = [2]^{1/2} q^{-2} v_1 \wedge v_{-1}, \quad
F (v_1 \wedge v_{-1}) = [2]^{1/2} q^2 v_0 \wedge v_{-1}, \quad
F (v_0 \wedge v_{-1}) = 0.
\]
Hence, making the identifications $v_1 \wedge v_0 \sim v_1$, $v_1 \wedge v_{-1} \sim q^2 v_0$ and $v_0 \wedge v_{-1} \sim v_{-1}$, we obtain the result in degree $2$ by appropriate rescaling of the result in degree $1$.
\end{proof}

Using the Hermitian inner product $(\cdot, \cdot): \Lambda_q(\mathfrak{u}_+) \otimes \Lambda_q(\mathfrak{u}_+) \to \mathbb{C}$ we can define adjoints of operators on $\Lambda_q(\mathfrak{u}_+)$.
In particular we are interested in operators of degree $-1$, such as $\gamma_-(w)$.
Given such an operator $T$, we will consider the maps
\[
T^{(k)}: \Lambda_q^k(\mathfrak{u}_+) \to \Lambda_q^{k - 1}(\mathfrak{u}_+),\quad
T^{(k)*}: \Lambda_q^{k - 1} (\mathfrak{u}_+) \to \Lambda_q^k(\mathfrak{u}_+).
\]
The first map is the restriction of $T$ to elements of degree $k$, while the second map is defined by $(T^{(k)} v, v^\prime)_{k - 1} = (v, T^{(k)*} v^{\prime})_k$ 
for all elements $v \in \Lambda_q^k(\mathfrak{u}_+)$ and $v^\prime \in \Lambda_q^{k - 1}(\mathfrak{u}_+)$.
In terms of matrices, this means that our operators take the form
\[
T = \left(
\begin{array}{cccc}
0 & T^{(1)} & 0 & 0\\
0 & 0 & T^{(2)} & 0\\
0 & 0 & 0 & T^{(3)}\\
0 & 0 & 0 & 0
\end{array}
\right),\quad
T^* = \left(
\begin{array}{cccc}
0 & 0 & 0 & 0\\
T^{(1)*} & 0 & 0 & 0\\
0 & T^{(2)*} & 0 & 0\\
0 & 0 & T^{(3)*} & 0
\end{array}
\right).
\]

\begin{lemma}
\label{lem:matrix-adjoint}
Let $M^{(k)}$ be  the matrix of inner products in degree $k$, that is the matrix with entries $M^{(k)}_{i j} = (v^{(k)}_i, v^{(k)}_j)_k$.
Let $T$ be an operator as above.
Then we have
\[
T^{(k)*} = (M^{(k)})^{-1} T^{(k)\dagger} M^{(k - 1)},
\]
where $\dagger$ denotes the usual conjugate transpose.
\end{lemma}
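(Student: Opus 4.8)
The plan is to reduce the statement to elementary linear algebra by writing everything in coordinates with respect to the ordered bases $\{v_i^{(k)}\}_i$ of $\Lambda_q^k(\mathfrak{u}_+)$ fixed in \autoref{def:ord-basis}. First I would fix the convention that the invariant Hermitian inner product $(\cdot,\cdot)_k$ is conjugate-linear in the first slot and linear in the second, consistent with the condition $(v, X v') = (X^* v, v')$ used earlier. With this convention, if $v, v' \in \Lambda_q^k(\mathfrak{u}_+)$ have coordinate column vectors $x, y$, then $(v, v')_k = x^\dagger M^{(k)} y$, where $M^{(k)}$ is the Gram matrix in the statement. Here I would also record that $M^{(k)}$ is a positive-definite Hermitian matrix, being the Gram matrix of an inner product, hence invertible; this is what makes the claimed formula meaningful. (In the cases at hand this is visible directly from the explicit diagonal matrices computed in the previous proposition, for $q$ real and positive.)

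Next I would pin down how the maps $T^{(k)} \colon \Lambda_q^k(\mathfrak{u}_+) \to \Lambda_q^{k-1}(\mathfrak{u}_+)$ and $T^{(k)*} \colon \Lambda_q^{k-1}(\mathfrak{u}_+) \to \Lambda_q^k(\mathfrak{u}_+)$ are represented by matrices between spaces carrying different bases: writing $T^{(k)} v_j^{(k)} = \sum_i (T^{(k)})_{ij} v_i^{(k-1)}$, the coordinate vector of $T^{(k)} v$ is $T^{(k)} x$, and likewise the coordinate vector of $T^{(k)*} v'$ is obtained by applying the matrix of $T^{(k)*}$ to $y$. I then rewrite the defining relation
\[
(T^{(k)} v, v')_{k-1} = (v, T^{(k)*} v')_k
\]
in coordinates, obtaining
\[
(T^{(k)} x)^\dagger M^{(k-1)} y = x^\dagger M^{(k)} (T^{(k)*} y),
\]
which must hold for all $x \in \mathbb{C}^{\dim \Lambda_q^k(\mathfrak{u}_+)}$ and all $y \in \mathbb{C}^{\dim \Lambda_q^{k-1}(\mathfrak{u}_+)}$.

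Finally I would expand the left-hand side as $x^\dagger (T^{(k)})^\dagger M^{(k-1)} y$ and, since $x$ and $y$ are arbitrary, conclude the matrix identity $(T^{(k)})^\dagger M^{(k-1)} = M^{(k)} T^{(k)*}$; multiplying on the left by $(M^{(k)})^{-1}$ yields $T^{(k)*} = (M^{(k)})^{-1} T^{(k)\dagger} M^{(k-1)}$, with $T^{(k)\dagger} = (T^{(k)})^\dagger$ the ordinary conjugate transpose, which is exactly the claim. I do not expect any real obstacle: the only points needing care are the consistent choice of sesquilinearity convention and the bookkeeping for matrices of linear maps between spaces with distinct bases. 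If one wants to avoid fixing a convention altogether, one could instead phrase the whole argument intrinsically via the musical isomorphisms $\Lambda_q^k(\mathfrak{u}_+) \to \Lambda_q^k(\mathfrak{u}_+)^*$ induced by the inner products, but the coordinate computation above is the most economical route.
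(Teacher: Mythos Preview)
Your argument is correct and is exactly the elementary linear algebra the paper alludes to; the paper's own proof is simply the one-line remark ``Follows from some elementary linear algebra.'' You have spelled out precisely that computation, with the appropriate care about the sesquilinearity convention and the invertibility of the Gram matrices.
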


\begin{proof}
Follows from some elementary linear algebra.
\end{proof}

\subsection{Matrix expressions}

Here we will record the matrix expressions for the operators $\gamma_-(w_i)$ and their adjoints.
Let us adopt the following short-hand notation.

\begin{notation}
We will write $\Gamma_+ = \gamma_-(w_1)$, $\Gamma_0 = \gamma_-(w_0)$ and $\Gamma_- = \gamma_{-}(w_{-1})$.
\end{notation}

The matrices can be read off from \autoref{prop:action-gammaM}. In degree $1$ we have
\[
\Gamma_{+}^{(1)} = \left(
\begin{array}{ccc}
1 & 0 & 0
\end{array}
\right), \quad
\Gamma_{0}^{(1)} = \left(
\begin{array}{ccc}
0 & 1 & 0
\end{array}
\right), \quad
\Gamma_{-}^{(1)} = \left(
\begin{array}{ccc}
0 & 0 & 1
\end{array}
\right).
\]
In degree $2$ we have
\[
\Gamma_{+}^{(2)} = \left(
\begin{array}{ccc}
0 & 0 & 0\\
q^{-2} & 0 & 0\\
0 & 1 & 0
\end{array}
\right), \quad
\Gamma_{0}^{(2)} = \left(
\begin{array}{ccc}
-1 & 0 & 0\\
0 & -q (q - q^{-1}) & 0\\
0 & 0 & q^{-2}
\end{array}\right), \quad
\Gamma_{-}^{(2)} = \left(
\begin{array}{ccc}
0 & -1 & 0\\
0 & 0 & -1\\
0 & 0 & 0
\end{array}
\right).
\]
Finally in degree $3$ we have
\[
\Gamma_{+}^{(3)} = \left(
\begin{array}{ccc}
0 & 0 & q^2
\end{array}
\right)^T,\quad
\Gamma_{0}^{(3)} = \left(
\begin{array}{ccc}
0 & - q^2 & 0
\end{array}
\right)^T,\quad
\Gamma_{-}^{(3)} = \left(
\begin{array}{ccc}
q^4 & 0 & 0
\end{array}
\right)^T.
\]

We will also need the expressions for the adjoints, as in \autoref{lem:matrix-adjoint}.
In degree $0$ we have
\[
\Gamma_{+}^{(1)*} = \left(
\begin{array}{ccc}
1 & 0 & 0
\end{array}
\right)^T, \quad
\Gamma_{0}^{(1)*} = \left(
\begin{array}{ccc}
0 & 1 & 0
\end{array}
\right)^T, \quad
\Gamma_{-}^{(1)*} = \left(
\begin{array}{ccc}
0 & 0 & q^{-2}
\end{array}
\right)^T.
\]
In degree $1$ we have
\begin{gather*}
\Gamma_{+}^{(2)*} = \left(
\begin{array}{ccc}
0 & q^{-2} & 0\\
0 & 0 & q^{-2}\\
0 & 0 & 0
\end{array}
\right),\quad
\Gamma_{0}^{(2)*} = \left(
\begin{array}{ccc}
-1 & 0 & 0\\
0 & -q^{-3} (q - q^{-1}) & 0\\
0 & 0 & q^{-2}
\end{array}
\right), \\
\Gamma_{-}^{(2)*} = \left(
\begin{array}{ccc}
0 & 0 & 0\\
-q^{-4} & 0 & 0\\
0 & -q^{-2} & 0
\end{array}
\right).
\end{gather*}
Finally in degree $2$ we have
\[
\Gamma_{+}^{(3)^*} = \left(
\begin{array}{ccc}
0 & 0 & q^4
\end{array}
\right),\quad
\Gamma_{0}^{(3)^*} = \left(
\begin{array}{ccc}
0 & -q^6 & 0
\end{array}
\right),\quad
\Gamma_{-}^{(3)^*} = \left(
\begin{array}{ccc}
q^4 & 0 & 0
\end{array}
\right).
\]

\section{Commutation relations quantum Clifford}

\label{sec:comm-clifford}

In this section we will discuss the commutation relations for the quantum Clifford algebra $\mathrm{End}(\Lambda_q(\mathfrak{u}_+))$, where $\mathfrak{u}_+$ corresponds the adjoint representation of $U_q(\mathfrak{sl}(2))$.
More precisely, we will be concerned with relations between the elements $\Gamma_i = \gamma_-(w_i)$ and their adjoints. These are the elements appearing in the definition of the Dolbeault-Dirac operator.
The main result is the fact that we do not have quadratic relations among these.

\subsection{Equivariance of the construction}

Before getting into the commutation relations, let us pause for a moment to stress the equivariance of the construction of the quantum Clifford algebra. Some consequences of this property will be used in the proof below.

Recall that the quantum Clifford algebra is defined in terms of the maps $\gamma_\pm: \Lambda_q(\mathfrak{u}_\pm) \to \mathrm{End}(\Lambda_q(\mathfrak{u}_+))$.
First of all we have that $\mathfrak{u}_\pm$ are $U_q(\mathfrak{l})$-modules, which implies that $\Lambda_q(\mathfrak{u}_\pm)$ are $U_q(\mathfrak{l})$-modules, since the exterior algebras are defined in terms of module maps.
Then the quantum Clifford algebra $\mathrm{End}(\Lambda_q(\mathfrak{u}_+))$ becomes a $U_q(\mathfrak{l})$-module in a canonical way.
We denote all these actions by $\triangleright$.
The map $\gamma_+$ is clearly equivariant since it corresponds to left multiplication on $\Lambda_q(\mathfrak{u}_+)$.
On the other hand, the equivariance of the map $\gamma_-$ follows from the fact that the pairing used in its definition is invariant.
Summarizing, we find that for all $X \in U_q(\mathfrak{l})$, $v \in \Lambda_q(\mathfrak{u}_+)$ and $w \in \Lambda_q(\mathfrak{u}_-)$ we have the relations
\[
X \triangleright \gamma_+(v) = \gamma_+(X \triangleright v),\quad
X \triangleright \gamma_-(w) = \gamma_-(X \triangleright w).
\]

Next we consider the introduction of an invariant Hermitian inner product on $\Lambda_q(\mathfrak{u}_+)$.
This defines a $*$-structure on $\mathrm{End}(\Lambda_q(\mathfrak{u}_+))$.
It is compatible with the $*$-structure on $U_q(\mathfrak{l})$, coming from the compact real form of $U_q(\mathfrak{g})$.
This compatibility takes the form
\[
(X \triangleright T)^* = S(X)^* \triangleright T^*, \quad
X \in U_q(\mathfrak{l}), \ T \in \mathrm{End}(\Lambda_q(\mathfrak{u}_+)).
\]

\subsection{Absence of quadratic relations}

We are now in the position to show that we do not have quadratic commutation relations in the quantum Clifford algebra.

\begin{proposition}
\label{prop:not-quadratic}
Let $\Gamma_i = \gamma_-(w_i) \in \mathrm{End}(\Lambda_q(\mathfrak{u}_+))$, where $\mathfrak{u}_+$ is the adjoint module of $U_q(\mathfrak{sl}(2))$.
Then for $0 < q < 1$ we do not have relations of the form
\[
\Gamma_i \Gamma_j^* = \sum_{k, l} c_{i j}^{k l} \Gamma_k^* \Gamma_l, \quad i \neq j.
\]
\end{proposition}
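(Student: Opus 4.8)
The plan is to assume, for contradiction, that such quadratic relations hold and then exploit the explicit matrices $\Gamma_\pm^{(k)}$, $\Gamma_0^{(k)}$ and their adjoints computed in \autoref{sec:quantum-clifford} to produce an inconsistent system of scalar equations. The cleanest place to look for a contradiction is the pair $i = 1$, $j = -1$ (highest and lowest weight), since the left-hand side $\Gamma_+ \Gamma_-^*$ is a nonzero degree-preserving operator, while on the right-hand side the only products $\Gamma_k^* \Gamma_l$ with the correct weight are $\Gamma_k^* \Gamma_l$ with $\mathrm{wt}(w_l) - \mathrm{wt}(w_k) = \mathrm{wt}(w_{-1}) - \mathrm{wt}(w_1)$, i.e.\ $(k,l) = (1,-1)$, together with the ``diagonal'' weight-zero possibility that is excluded by weight considerations unless $l = k$. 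More precisely: first I would use the $K$-weight grading, exactly as in the proof of part (2) of \autoref{prop:parthasarathy}, to restrict the possible terms $\Gamma_k^*\Gamma_l$ appearing on the right-hand side for each fixed $(i,j)$. Since $\Gamma_m^* \Gamma_l$ has the same commutation behaviour with $K$ as $E_{\xi_l}^* E_{\xi_m}$ does — wait, the correct bookkeeping is that $\gamma_-(w_m)$ lowers the $\mathfrak u_-$-weight and its adjoint raises it — the net weight of $\Gamma_k^*\Gamma_l$ is $\mathrm{wt}(w_l)-\mathrm{wt}(w_k)$, which must match $\mathrm{wt}(w_i)-\mathrm{wt}(w_j)$ for the relation to be homogeneous. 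This pins down the admissible index sets very tightly.

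Next I would restrict everything to each graded piece $\Lambda_q^k(\mathfrak u_+)$ for $k = 1, 2, 3$, turning the operator identity into three matrix identities among the explicit matrices listed in \autoref{sec:quantum-clifford}. Working degree by degree, I would solve for the unknown coefficients $c_{ij}^{kl}$ forced by, say, the degree-$1$ and degree-$2$ restrictions, and then check whether those same coefficients are consistent with the degree-$3$ restriction (or vice versa). The key obstruction — and the heart of the argument — is that the operators $\gamma_-(w_i)$ are \emph{not} the classical contraction operators: because $v_0 \wedge v_0 = -q^{-1}(q - q^{-1}) v_1 \wedge v_{-1} \neq 0$ in $\Lambda_q(\mathfrak u_+)$ (the remark at the end of the proof of \autoref{prop:action-gammaM} flags exactly this), the operator $\Gamma_0^{(2)}$ has the genuinely $q$-dependent middle entry $-q(q-q^{-1})$, and the adjoints pick up further asymmetric powers of $q$ from the nondiagonal inner-product matrices $M^{(k)}$. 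This means the three matrix equations overdetermine the $c_{ij}^{kl}$ and cannot be simultaneously satisfied once $q \neq 1$.

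The step I expect to be the main obstacle is organizing the degree-by-degree comparison so that the contradiction is transparent rather than buried in a large linear system: I would isolate a single matrix entry — plausibly a corner entry of the degree-$2$ or degree-$3$ block of $\Gamma_1 \Gamma_{-1}^*$ versus $\sum c_{1,-1}^{k,l}\Gamma_k^*\Gamma_l$ — whose value on the left is a fixed power of $q$ while on the right it is forced, by the lower-degree equations, to equal a different power of $q$ (or a rational function of $q$ that agrees with the left side only at $q=1$). The hypothesis $0 < q < 1$ guarantees $q - q^{-1} \neq 0$ and $q^a \neq q^b$ for $a \neq b$, so the two sides cannot agree, giving the contradiction. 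One should also invoke the equivariance relations $X \triangleright \gamma_-(w) = \gamma_-(X\triangleright w)$ and $(X\triangleright T)^* = S(X)^*\triangleright T^*$ recorded above to reduce the number of cases: it suffices to handle one representative off-diagonal pair, since acting by $E$, $F$ relates the relation for $(i,j)$ to those for neighbouring weights, so a single failed relation propagates to show no consistent family $\{c_{ij}^{kl}\}$ exists.
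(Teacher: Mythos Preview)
Your overall strategy---reduce the right-hand side by $K$-weight, then compare degree by degree using the explicit matrices of \autoref{sec:quantum-clifford}---is exactly the route the paper takes, and it works. There are, however, two places where your sketch diverges from the paper's proof and where your intuition is slightly miscalibrated.

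First, the paper does \emph{not} use your pair $(i,j)=(1,-1)$; it uses $\Gamma_-\Gamma_0^*$. With your pair the weight constraint forces a single term on the right, so the putative relation is $\Gamma_+\Gamma_-^* = c\,\Gamma_-^*\Gamma_+$; comparing the $(3,1)$ entries in degrees $1$ and $2$ one finds $c=-q^{-2}$ versus $c=-q^{10}$, which is indeed a contradiction for $0<q<1$. But note that $\Gamma_0$ never enters this computation, so your stated ``heart of the argument''---the entry $-q(q-q^{-1})$ in $\Gamma_0^{(2)}$ coming from $v_0\wedge v_0\neq 0$---is irrelevant for your own choice of pair. That entry is precisely what drives the paper's argument for $\Gamma_-\Gamma_0^*$: there the weight constraint allows \emph{two} terms, $t\,\Gamma_0^*\Gamma_- + t'\,\Gamma_+^*\Gamma_0$, and degree $1$ determines $t,t'$ while one of the degree-$2$ equations reduces to $tq(q-q^{-1})+t'=0$, which is incompatible unless $q=\pm1$.

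Second, the paper's proof deliberately carries along the rescaling freedom $c_k$ from \autoref{sec:rescaling} and obtains a contradiction that survives any such rescaling. Your pair $(1,-1)$ does \emph{not}: since only one unknown $c$ appears, the mismatch $-q^{-2}$ versus $-q^{10}$ between degrees $1$ and $2$ can be absorbed into a suitable choice of $c_2^3 c_0/(c_1^3 c_3)$. For the proposition as literally stated (fixed normalizations) your pair suffices; for the stronger claim the paper actually proves (no quadratic relation for \emph{any} rescaling), you would need a pair whose relation involves at least two terms on the right, and $(-1,0)$ is the natural choice. Your closing remark about equivariance propagating a single failure is also slightly backwards: equivariance transports a \emph{valid} relation to neighbouring weights, not a failed one, so it does not help reduce cases here.
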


\begin{proof}
Let us start with some general considerations for a generic $U_q(\mathfrak{l})$-module $\mathfrak{u}_-$.
Suppose we do have quadratic relations between the elements $\Gamma_{i}$ and $\Gamma_{j}^{*}$, with $i \neq j$ as above.
Then we can restrict the terms appearing on the right-hand side by using the equivariance of the map $\gamma_-$.
This can be seen as follows.
Let $\beta_i$ be the weight of the basis vector $w_i \in \mathfrak{u}_-$.
We have $K_k \triangleright w_i = q^{(\alpha_k, \beta_i)} w_i$, hence by equivariance we obtain $K_k \triangleright \gamma_-(w_i) = q^{(\alpha_k, \beta_i)} \gamma_-(w_i)$.
Similarly we find that $K_k \triangleright \gamma_-(w_i)^* = q^{-(\alpha_k, \beta_i)} \gamma_-(w_i)^*$.
This follows from the compatibility condition of the $*$-structure with the action, rewritten in the form $X \triangleright T^* = (S(X)^* \triangleright T)^*$, together with the fact that $S(K_k)^* = K_k^{-1}$.
Therefore the action on the product $\Gamma_i \Gamma_j^*$ is
\[
\begin{split}
K_k \triangleright (\Gamma_i \Gamma_j^*)
 & = (K_k \triangleright \gamma_-(w_i)) (K_k \triangleright \gamma_-(w_j)^*)\\
 & = q^{(\alpha_k, \beta_i - \beta_j)} \gamma_-(w_i) \gamma_-(w_j)^*.
\end{split}
\]
Since the elements $\{\Gamma_k^* \Gamma_l\}_{k, l}$ are linearly independent, we conclude that the term $\Gamma_k^* \Gamma_l$ can appear in the sum only when it matches the weight of the term $\Gamma_i \Gamma_j^*$.

We will now concentrate on the case of the adjoint representation.
It is enough to focus on the commutation relation between the elements $\Gamma_- = \gamma
_-(w_{-1})$ and $\Gamma_0^* = \gamma
_-(w_0)^*$.
Arguing as above, we find using equivariance that this must take the form
\[
\Gamma_- \Gamma_0^* = t \Gamma_0^* \Gamma_- + t^\prime \Gamma_+^* \Gamma_0,
\]
for some $t$ and $t^\prime$.
In particular, when acting on elements of degree $k$ it reads
\[
\Gamma_-^{(k + 1)} \Gamma_{0}^{(k + 1)*} = t \Gamma_0^{(k)*} \Gamma_-^{(k)} + t^\prime \Gamma_+^{(k)*} \Gamma_0^{(k)}.
\]
Taking into account the rescalings as in \autoref{sec:rescaling} this becomes
\begin{equation}
\label{eq:matrix-eqs}
\frac{c_{k + 1}}{c_{k}} \Gamma_{-}^{(k + 1)} \Gamma_{0}^{(k + 1)*} = \frac{c_{k}}{c_{k - 1}} (t \Gamma_{0}^{(k)*} \Gamma_{-}^{(k)} + t^{\prime} \Gamma_{+}^{(k)*} \Gamma_{0}^{(k)}).
\end{equation}
We will only consider the cases of degree $1$ and $2$, since this is enough to prove that the above identity is not satisfied for $q \neq 1$.
Let us start with $k = 1$.
Plugging in the explicit matrix expressions into \eqref{eq:matrix-eqs} we obtain the two equations
\begin{equation}
\label{eq:sol-t-tp}
\frac{c_{2}}{c_{1}} q^{-3} (q - q^{-1}) =\frac{c_{1}}{c_{0}} t^{\prime}, \quad
- \frac{c_{2}}{c_{1}} q^{-2}=\frac{c_{1}}{c_{0}} t.
\end{equation}
Clearly the coefficients $\{c_i\}_i$ must be non-zero, otherwise we would have degenerate pairings.
Hence we can use these two equations to determine the coefficients $t$ and $t^{\prime}$. Consider now the
case $k = 2$. Proceeding as before we obtain the equations
\[
-\frac{c_{3}}{c_{2}} q^{10} = \frac{c_{2}}{c_{1}} (t - t^\prime q^{-1}(q - q^{-1})), \quad
0 = \frac{c_{2}}{c_{1}} q^{-4} (t q (q - q^{-1}) + t^\prime).
\]
By the argument above, the second equation is equivalent to $t q (q - q^{-1}) + t^\prime = 0$.
Plugging in the expressions for $t$ and $t^{\prime}$ obtained from \eqref{eq:sol-t-tp}, we see that this equation has a solution only in the cases $q = \pm 1$. This concludes the proof.
\end{proof}

The absence of these quadratic relations in the quantum Clifford algebra, together with the results of  \autoref{prop:parthasarathy}, completes the proof of \autoref{thm:no-parthasarathy}.

\appendix

\section{Rescaling of pairing and inner product}
\label{sec:rescaling}

In this appendix we will discuss the effects of rescaling the dual pairing and the Hermitian inner product.
As mentioned previously this is always possible in each degree.
The aim is to have the most general setting for the discussion of commutation relations in the quantum Clifford algebra.
Although ultimately it won't play any role, we can not exclude this a priori.

We will use as before the notation $\Gamma_i = \gamma_-(w_i)$, where $\{w_i\}_i$ is a basis of $\mathfrak{u}_-$. Recall also that we write $\Gamma_i^{(k)}$ and $\Gamma_i^{(k + 1)*}$ for the restriction of $\Gamma_i$ and $\Gamma_i^*$ to elements of degree $k$.

\begin{lemma}
\label{lem:rescaling}
Under the rescalings $\langle \cdot, \cdot \rangle_k \to \lambda_k \langle\cdot, \cdot\rangle_k$ and $(\cdot, \cdot)_k \to \lambda_k^\prime (\cdot, \cdot)_k$ we have
\[
\Gamma_i^{(k)} \to \frac{\lambda_k}{\lambda_{k - 1}} \Gamma_i^{(k)},\quad
\Gamma_i^{(k + 1)*} \to \frac{\bar{\lambda}_{k + 1}}{\bar{\lambda}_k} \frac{\lambda_k^\prime}{\lambda_{k + 1}^{\prime}} \Gamma_{i}^{(k + 1)*}.
\]
\end{lemma}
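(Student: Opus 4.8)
The plan is to read off both rules by inspecting the two equations that \emph{define} $\gamma_-$ and the adjoint, and then invoking non-degeneracy of the relevant form in each degree; this is exactly the kind of bookkeeping alluded to in the proof of \autoref{lem:matrix-adjoint}. Throughout, note that the dual pairing $\langle\cdot,\cdot\rangle$ is bilinear, so the $\lambda_k$ are arbitrary nonzero complex numbers, whereas $(\cdot,\cdot)_k$ is a Hermitian inner product, so the $\lambda_k^\prime$ must be real for the rescaled form to stay Hermitian; this is why only the $\lambda_k$ will reappear conjugated.

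First I would treat $\Gamma_i^{(k)}$. By the definition of $\gamma_-$ recalled before \autoref{prop:action-gammaM}, the operator $\Gamma_i^{(k)}$, that is the restriction of $\gamma_-(w_i)$ to $\Lambda_q^k(\mathfrak{u}_+)$, is characterized by
\[
\langle w, \Gamma_i^{(k)} v \rangle_{k-1} = \langle w \wedge w_i, v \rangle_k, \quad w \in \Lambda_q^{k-1}(\mathfrak{u}_-),\ v \in \Lambda_q^k(\mathfrak{u}_+).
\]
After the substitution $\langle\cdot,\cdot\rangle_j \mapsto \lambda_j\langle\cdot,\cdot\rangle_j$, the operator $\widetilde{\Gamma}_i^{(k)}$ satisfying the same relation for the rescaled pairings obeys $\lambda_{k-1}\langle w, \widetilde{\Gamma}_i^{(k)} v\rangle_{k-1} = \lambda_k\langle w\wedge w_i, v\rangle_k = \lambda_k\langle w, \Gamma_i^{(k)} v\rangle_{k-1}$ for all $w$. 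Since $\langle\cdot,\cdot\rangle_{k-1}$ is non-degenerate this forces $\widetilde{\Gamma}_i^{(k)} = (\lambda_k/\lambda_{k-1})\,\Gamma_i^{(k)}$, which is the first claim; the $\lambda_j^\prime$ do not appear because $\gamma_-$ is built only from the bilinear pairing.

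For the adjoint I would start from the characterization $(\Gamma_i^{(k+1)} v, v^\prime)_k = (v, \Gamma_i^{(k+1)*} v^\prime)_{k+1}$ for $v\in\Lambda_q^{k+1}(\mathfrak{u}_+)$ and $v^\prime\in\Lambda_q^k(\mathfrak{u}_+)$. Two changes now occur simultaneously: by the previous step $\Gamma_i^{(k+1)}$ is replaced by $(\lambda_{k+1}/\lambda_k)\,\Gamma_i^{(k+1)}$, and the inner products in degrees $k$ and $k+1$ pick up the factors $\lambda_k^\prime$ and $\lambda_{k+1}^\prime$. Writing the defining relation for the new adjoint and pulling the scalar $\lambda_{k+1}/\lambda_k$ out of the first slot of the inner product — which is conjugate-linear in its first argument, as the $\bar\lambda$'s in the statement confirm — produces its complex conjugate, so one obtains $\lambda_{k+1}^\prime\,(v, \widetilde{\Gamma_i^{(k+1)*}} v^\prime)_{k+1} = (\bar{\lambda}_{k+1}/\bar{\lambda}_k)\,\lambda_k^\prime\,(v, \Gamma_i^{(k+1)*} v^\prime)_{k+1}$ for all $v$, and non-degeneracy of $(\cdot,\cdot)_{k+1}$ yields $\widetilde{\Gamma_i^{(k+1)*}} = (\bar{\lambda}_{k+1}/\bar{\lambda}_k)(\lambda_k^\prime/\lambda_{k+1}^\prime)\,\Gamma_i^{(k+1)*}$. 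The only point demanding care — and the source of the conjugates and of the differing indices in the statement — is this bookkeeping: tracking which slot of the Hermitian form is conjugate-linear, remembering that the bilinear pairing carries no conjugation, and the degree indices $(k-1,k)$ in the $\gamma_-$ relation versus $(k,k+1)$ in the adjoint relation. Beyond that there is no real obstacle.
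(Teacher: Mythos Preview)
Your proof is correct and follows the same approach as the paper: both arguments simply insert the rescaling factors into the defining relations for $\gamma_-$ and for the adjoint, then invoke non-degeneracy. The only cosmetic difference is that the paper treats the inner-product rescaling first and then folds in the effect of the dual-pairing rescaling on $\Gamma_i^{(k+1)}$, while you handle both at once; the bookkeeping and the use of conjugate-linearity in the first slot are identical.
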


\begin{proof}
Recall that $\Gamma_{i}^{(k)}:\Lambda_{q}^{k}(\mathfrak{u}_{+})\to\Lambda_{q}^{k-1}(\mathfrak{u}_{+})$
is defined by
\[
\langle z,\Gamma_{i}^{(k)}v\rangle_{k-1}=\langle z\wedge w_{i},v\rangle_{k},\quad v\in\Lambda_{q}^{k}(\mathfrak{u}_{+}),\ z\in\Lambda_{q}^{k-1}(\mathfrak{u}_{-}).
\]
Rescaling the dual pairing as $\langle\cdot,\cdot\rangle_k \to \lambda_{k}\langle\cdot,\cdot\rangle_k$ we get
\[
\Gamma_{i}^{(k)}\to\frac{\lambda_{k}}{\lambda_{k-1}}\Gamma_{i}^{(k)}.
\]

Next we consider the case of $\Gamma_i^{(k + 1)*} : \Lambda_q^k (\mathfrak{u}_+) \to \Lambda_q^{k + 1} (\mathfrak{u}_+)$.
It is defined by
\[
(\Gamma_i^{(k + 1)} v, v^\prime)_{k} = (v, \Gamma_i^{(k + 1)*} v^\prime)_{k + 1}, \quad
v \in \Lambda_q^{k + 1} (\mathfrak{u}_+), \ v^\prime \in \Lambda_{q}^{k} (\mathfrak{u}_+).
\]
Rescaling the inner product as $(\cdot,\cdot)_k \to \lambda_{k}^\prime (\cdot,\cdot)_k$ we get
\[
\Gamma_i^{(k + 1)*} \to\frac{\lambda_k^\prime}{\lambda_{k + 1}^\prime} \Gamma_i^{(k + 1)*}.
\]
Finally we combine this with the rescaling of the dual pairing. Using the fact that the inner product is conjugate-linear in the first variable we obtain
\[
\Gamma_i^{(k + 1)*} \to\frac{\bar{\lambda}_{k + 1}}{\bar{\lambda}_k} \frac{\lambda_k^\prime}{\lambda_{k + 1}^\prime} \Gamma_i^{(k + 1)*}.
\qedhere
\]
\end{proof}

In particular we need the combinations $\Gamma_{i}^{(k + 1)} \Gamma_{j}^{(k + 1)*}$
and $\Gamma_{i}^{(k)*} \Gamma_{j}^{(k)}$, acting on elements of degree $k$, which appear in quadratic relations. Upon rescaling we find
\[
\Gamma_i^{(k + 1)} \Gamma_j^{(k + 1)*} \to \frac{|\lambda_{k + 1}|^2}{|\lambda_k|^2} \frac{\lambda_k^\prime}{\lambda_{k + 1}^\prime} \Gamma_i^{(k + 1)} \Gamma_j^{(k + 1)*}
\]
and similarly for the other one. Therefore if we define $c_k = |\lambda_k|^2 / \lambda_k^\prime$ we obtain
\[
\Gamma_i^{(k + 1)} \Gamma_j^{(k + 1)*} \to \frac{c_{k + 1}}{c_k} \Gamma_i^{(k + 1)} \Gamma_j^{(k + 1)*}, \quad
\Gamma_i^{(k)*} \Gamma_j^{(k)} \to\frac{c_k}{c_{k - 1}} \Gamma_i^{(k)*} \Gamma_j^{(k)}.
\]

\vspace{3mm}

{\footnotesize
\emph{Acknowledgements}.
I would like to thank Ulrich Krähmer for several discussions on the topics of this paper.
I am supported by the European Research Council under the European Union's Seventh Framework Programme (FP/2007-2013) / ERC Grant Agreement no. 307663 (P.I.: S. Neshveyev).
}

\bigskip


\begin{thebibliography}{50}

\bibitem[BeZw08]{bezw}
A. Berenstein, S. Zwicknagl,
\textit{Braided symmetric and exterior algebras},
Transactions of the American Mathematical Society 360, no. 7 (2008): 3429-3472.

\bibitem[ChPr]{chpr}
V. Chari, A.N. Pressley,
\textit{A guide to quantum groups},
Cambridge university press, 1995.

\bibitem[ChTS14]{ChTS14}
A. Chirvasitu, M. Tucker-Simmons,
\textit{Remarks on quantum symmetric algebras},
Journal of Algebra 397 (2014): 589-608.

\bibitem[Con]{con-book}
A. Connes,
\textit{Noncommutative geometry}, Academic press, 1995.

\bibitem[DADą10]{dd-proj}
F. D’Andrea, L. Dąbrowski,
\textit{Dirac operators on quantum projective spaces},
Communications in Mathematical Physics 295, no. 3 (2010): 731-790.

\bibitem[DDL08]{ddl-plane}
F. D'Andrea, L. Dąbrowski, G. Landi,
\textit{The noncommutative geometry of the quantum projective plane},
Reviews in Mathematical Physics 20, no. 08 (2008): 979-1006.

\bibitem[DąSi03]{ds-pod}
L. Dąbrowski, A. Sitarz,
\textit{Dirac operator on the standard Podleś quantum sphere},
In: Noncommutative Geometry and Quantum Groups, Vol. 61, Warsaw: Banach Center Publ. (2003): 49–58.

\bibitem[HeKo04]{flagcalc1}
I. Heckenberger, S. Kolb,
\textit{The locally finite part of the dual coalgebra of quantized irreducible flag manifolds},
Proceedings of the London Mathematical Society 89, no. 02 (2004): 457-484.

\bibitem[HeKo06]{flagcalc2}
I. Heckenberger, S. Kolb,
\textit{De Rham complex for quantized irreducible flag manifolds},
Journal of Algebra 305, no. 2 (2006): 704-741.

\bibitem[HuPa]{dirac-book}
J.S. Huang, P. Pandzic,
\textit{Dirac operators in representation theory},
Springer Science \& Business Media, 2007.

\bibitem[KlSc]{klsc}A. U. Klimyk, K. Schmüdgen,
\textit{Quantum groups and
their representations}, Vol. 552. Berlin: Springer, 1997.

\bibitem[Krä04]{qflag}
U. Krähmer,
\textit{Dirac operators on quantum flag manifolds},
Letters in Mathematical Physics 67, no. 1 (2004): 49-59.

\bibitem[KrTu13]{qflag2}
U. Krähmer, M. Tucker-Simmons,
\textit{On the Dolbeault-Dirac Operator of Quantized Symmetric Spaces},
Transactions of the London Mathematical Society 2, no. 1 (2015): 33-56.

\bibitem[Lus]{lus-book}
G. Lusztig,
\textit{Introduction to quantum groups},
Springer Science \& Business Media (2010).

\bibitem[Mat15a]{mat-proj}
M. Matassa,
\textit{Dolbeault-Dirac operators on quantum projective spaces},
preprint \href{http://arxiv.org/abs/1507.01823}{arXiv:1507.01823} (2015).

\bibitem[Mat15b]{mat-comm}
M. Matassa,
\textit{Commutation relations for quantum root vectors of cominuscule parabolics},
preprint \href{http://arxiv.org/abs/1509.08762}{arXiv:1509.08762} (2015).

\bibitem[Par72]{partha}
R. Parthasarathy,
\textit{Dirac operator and the discrete series},
Annals of mathematics (1972): 1-30.

\bibitem[StDi99]{quantum-flag}
J. V. Stokman, M. S. Dijkhuizen,
\textit{Quantized flag manifolds and irreducible*-representations},
Communications in mathematical physics 203, no. 2 (1999): 297-324.

\bibitem[ThYo09]{schubert}
H. Thomas, A. Yong,
\textit{A combinatorial rule for (co)minuscule Schubert calculus},
Advances in Mathematics 222, no. 2 (2009): 596-620.

\bibitem[Zwi09]{zwi}
S. Zwicknagl,
\textit{R-matrix Poisson algebras and their deformations},
Advances in Mathematics 220, no. 1 (2009): 1-58.

\end{thebibliography}
\end{document}